\newcommand\A{\mathbb A}
\newcommand\C{\mathbb C}
\newcommand\E{\mathbb E}
\newcommand\M{\mathbb M}
\newcommand\N{\mathbb N}
\newcommand\R{\mathbb R}
\newcommand\Z{\mathbb Z}
\newcommand\cH{\mathcal H}
\newcommand\cN{\mathcal N}
\newcommand\cS{\mathcal S}
\newcommand\fh{\mathfrak h}
\newcommand\fM{\mathfrak M}
\newcommand{\ra}{\longrightarrow}
\newcommand{\into}{\hookrightarrow}
\def\doublearrow #1 #2
\quad\raisebox{.1cm}{$\overset{#1}\ra$}
\newcommand\wh{\widehat}
\newcommand\str{\operatorname{str}}
\newcommand\tr{\operatorname{tr}}
\newcommand\sdim{\operatorname{sdim}}
\newcommand\Aut{\operatorname{Aut}}
\newcommand\End{\operatorname{End}}
\newcommand\Diff{\operatorname{Diff}}
\newcommand{\id}{\operatorname{id}}
\newcommand\p{\partial}
\newcommand\im{\operatorname{im}}
\newcommand\Cl{C\ell} 
\newcommand\pt{\operatorname{{pt}}}
\newcommand\map{\operatorname{map}}
\newcommand\Iso{\operatorname{Iso}}
\newcommand\ev{\opn{ev}}
\newcommand{\one}{{1\!\!1}}
\newcommand{\ie}{{i.e.\;}}
\newcommand{\eg}{{e.g.\;}}
\newcommand\TMF{\opn{TMF}}
\newcommand\bra[1]{\langle #1\rangle}
\newcommand\Hom{\operatorname{Hom}}
\newcommand\Iff{if and only if }
\newcommand{\nb}{\nobreakdash}
\newcommand\scr{\mathscr}
\renewcommand\={\overset{\text{def}}=}
\newcommand{\opn}[1]{\operatorname{#1}}
\newcommand{\sA}{\text{\sf{A}}}
\newcommand{\sB}{\text{\sf{B}}}
\newcommand{\sC}{\text{\sf{C}}}
\newcommand{\sD}{\text{\sf{D}}}
\newcommand{\sS}{\text{\sf{S}}}
\newcommand\Cat{\text{\sf Cat}}
\newcommand\Man{\text{\sf Man}}
\newcommand{\Sym}{\text{\sf{Sym}}}
\newcommand\SymCat{\text{\sf SymCat}}
\newcommand\TA{\text{\sf TA}}
\newcommand\TV{\text{\sf TV}}
\newcommand{\Vect}{\text{\sf{Vect}}}
\newcommand{\SVect}{\text{\sf{SVect}}}
\newcommand{\Bord} [1]{#1\text{-\hskip -.01in{\sf Bord}}}
\newcommand{\EB} [1]{#1\text{-\hskip -.01in{\sf EB}}}
\newcommand{\RB} [1]{#1\text{-\hskip -.01in{\sf RB}}}
\newcommand{\EFT} [1]{#1\text{-\hskip -.01in}\opn{EFT}}
\newcommand{\TFT} [1]{#1 \text{-\hskip -.04in} \opn{TFT}}
\newtheorem{prop}{Proposition}[section]
\newtheorem{theorem}[prop]{Theorem}
\newtheorem{thm}[prop]{Theorem}
\newtheorem{cor}[prop]{Corollary}
\newtheorem{lem}[prop]{Lemma}
\newtheorem{ddefn}[prop]{Definition}
\newtheorem{eex}[prop]{Example}
\newtheorem{rrem}[prop]{Remark}
\newtheorem{eexercise}[prop]{Exercise}
\newtheorem{con}[prop]{Conjecture}
\newtheorem{hhome}[prop]{Homework}
\newtheorem{nnumber}[prop]{}
\newenvironment{defn}{\begin{ddefn}\rm}{\end{ddefn}}
\newenvironment{ex}{\begin{eex}\rm}{\end{eex}}
\newenvironment{rem}{\begin{rrem}\rm}{\end{rrem}}
\newenvironment{conj}{\begin{con}\rm}{\end{con}}
\global\let\c@equation=\c@prop}
\long\def\comments #1\endcomments{
{\bf #1}\newline}
\newcommand\csM{\text{\sf csM}}
\newcommand\SymGrp{\text{\sf SymGrp}}
\newcommand{\CEFT} [1]{#1\text{-\hskip -.01in}\opn{CEFT}}
\renewcommand{\RB} [1]{#1\text{-\hskip -.01in{\sf RBord}}}
\renewcommand{\EB} [1]{#1\text{-\hskip -.01in{\sf EBord}}}
\newcommand{\bEB} [1]{#1\text{-\hskip -.01in}\overline{\text{\sf EBord}}}
\newcommand{\bTV}{\overline{\TV}}
\newcommand{\CB} [1]{#1\text{-\hskip -.01in{\sf CBord}}}
\newcommand{\CEB} [1]{#1\text{-\hskip -.01in{\sf CEBord}}}
\newcommand{\GMMan}{(G,\M)\text{-\hskip -.01in{\sf Man}}}
\newcommand{\GMBord}{(G,\M)\text{-\hskip -.01in{\sf Bord}}}
\newcommand\fl{{\opn{f\ell}}}
\newcommand\doublebackslash{\backslash\hskip -.05in\backslash}
\newcommand{\sM}{\text{\sf{M}}}
\newcommand\Bun{\text{\sf{Bun}}}
\renewcommand{\one}{I}
\newcommand\red{\opn{red}}
\renewcommand{\ie}{{i.e., }}
\renewcommand{\eg}{{e.g., }}
\title{Supersymmetric field theories and generalized cohomology}
\author{Stephan Stolz and Peter Teichner}
\thanks{Supported by grants from the National Science Foundation, the Max Planck Society and the Deutsche Forschungsgemeinschaft.}
\thanks{[2010] {Primary  55N20 Secondary 11F23 18D10 55N34 57R56 81T60}}
\address{Department of Mathemetics, University of Notre Dame, South Bend, Indiana, 46556}
\address{Department of Mathematics, University of California, Berkeley, CA, 94720 and Max Planck Institute for Mathematics, Bonn, Germany}
\begin{document}
\maketitle
\tableofcontents


%


\date{}


\maketitle

\section{Results and conjectures}\label{sec:results}

This paper is a survey of our mathematical notions of Euclidean field theories as models for (the cocycles in) a cohomology theory. This subject was pioneered by Graeme Segal \cite{Se1} who suggested more than two decades ago that a cohomology theory known as {\em elliptic cohomology} can be described in terms of $2$\nb-dimensional (conformal) field theories. Generally what we are looking for are isomorphisms of the form
\begin{equation}\label{eq:leitmotiv}
\left\{\text{supersymmetric field theories of degree~$n$ over $X$} \right\}/_{\text{concordance}} \cong h^n(X)
\end{equation}
where a field theory over a manifold $X$ can be thought of as a family of field theories parametrized by $X$, and  the  abelian groups $h^n(X), n\in\Z$, form some (generalized) cohomology theory.
Such an isomorphism would give geometric cocycles for these cohomology groups in terms of objects from physics, and it would allow us to use the computational power of algebraic topology to determine families of field theories up to concordance. 

To motivate our interest in isomorphisms of type \eqref{eq:leitmotiv}, we recall the well-known isomorphism
\begin{equation}\label{eq:fredholm}
\left\{\text{Fredholm bundles over $X$} \right\}/_{\text{concordance}} \cong K^0(X),
\end{equation}
where $K^0(X)$ is complex $K$\nb-theory. We showed in \cite{HoST} that the space of Euclidean field theories of dimension $1|1$ has the homotopy type of the space of Fredholm operators, making the connection to \eqref{eq:leitmotiv}. 

The isomorphism \eqref{eq:fredholm} is one of the pillars of index theory in the following sense. Let $\pi\colon M\to X$ be a fiber bundle whose fibers are $2k$\nb-manifolds. Let us assume that the tangent bundle along the fibers admits a spin$^c$\nb-structure. This assumption guarantees that we can construct the Dirac operator on each fiber and that these fit together to give a bundle of Fredholm operators over the base space $X$. Up to concordance, this family is determined by the element in $K^0(X)$ it corresponds to via the isomorphism \eqref{eq:fredholm}. The {\em Family Index Theorem} describes this element as the image of the unit $1\in K^0(M)$ under the (topological) {\em push-forward map} 
\begin{equation}\label{eq:pushforward} 
\pi_*\colon K^0(M)\ra K^{-2k}(X)\cong K^0(X).
\end{equation}
The construction of the map $\pi_*$ does not involve any analysis -- it is described in homotopy theoretic terms. 

There is also a physics interpretation of the push-forward map \eqref{eq:pushforward}.The Dirac operator on a Riemannian spin$^c$ manifold  $N$ determines a Euclidean field theory of dimension $1|1$ (physicists would call it ``supersymmetric quantum mechanics'' on $N$), which should be thought of as the  quantization of the classical system consisting of a superparticle moving in $N$. We can think of the bundle of Dirac operators associated to a fiber bundle of spin$^c$ manifolds $N\to M\to X$ as a $1|1$\nb-dimensional field theory over $X$. This allows the construction of a (physical) push-forward map $\pi^q_*$ as the fiberwise {\em quantization} of $\pi$. The Feynman-Kac formula implies that $\pi^q_*$ equals the analytic push-forward and hence the equality $\pi_* = \pi^q_*$ is equivalent to the family index theorem.  

Our proposed model for `elliptic cohomology' will be given by $2|1$\nb-dimensional Euclidean field theories. More precisely, we conjecture that these describe the (periodic version of the) universal theory of topological modular forms $\TMF^*$ introduced by Hopkins and Miller \cite{Ho}. There is a topological push-forward map $\pi_*:\TMF^0(M) \to \TMF^{-k}(X)$ if $\pi: M\to X$ is a fiber bundle of k-dimensional string manifolds. The above discussion then has a conjectural analogue which would lead to a family index theorem on loop spaces. The fiberwise quantization $\pi^q$ requires the existence of certain integrals over mapping spaces of surfaces to $M$. 

In the next section, we provide a very rough definition of our notion of field theory and describe our main results and conjectures. A detailed description of the rest of the paper can be found at the end of that section. This paper only contains ideas and outlines of proofs, details will appear elsewhere.

{\em Acknowledgements:} It is a pleasure to thank Dan Freed, Dmitri Pavlov, Ingo Runkel, Chris Schommer-Pries, Urs Schreiber and Ed Witten for valuable discussions. We also thank the National Science Foundation, the Deutsche Forschungsgemeinschaft and the Max Planck Institute for Mathematics in Bonn for their generous support. Most of the material of this paper was presented at the Max Planck Institute as a IMPRS-GRK summer school in 2009.

\subsection{Field theories}
More than two decades ago, Atiyah, Kontsevich and Segal proposed a definition of a field theory as a functor from a suitable bordism category to the category of topological vector spaces. Our notion of field theory is a refinement of theirs for which the definition is necessarily quite intricate because we have to add the precise notion of ``supersymmetry" and ``degree" (another refinement, namely  ``locality" still needs to be implemented). While about half of this paper is devoted to explaining the definition, this is not a complete account. Fortunately, for the description of our results and conjectures, only a cartoon picture of what we mean by a field theory is needed. 

Roughly speaking, a $d$\nb-dimensional (topological) field theory $E$ assigns to any closed smooth $(d-1)$\nb-manifold $Y$ a topological vector space $E(Y)$, and to a $d$\nb-dimensional bordism $\Sigma$ from $Y_0$ to $Y_1$ a continuous linear map $E(\Sigma)\colon E(Y_0)\to E(Y_1)$. There are four requirements:
\begin{enumerate}
\item If $\Sigma$, $\Sigma'$ are bordisms from $Y_0$ to $Y_1$ which are diffeomorphic relative boundary, then $E(\Sigma)=E(\Sigma')$. 
\item $E$ is compatible with composition; \ie if $\Sigma_1$ is a bordism from $Y_0$ to $Y_1$ and $\Sigma_2$ is a bordism from $Y_1$ to $Y_2$, and $\Sigma_2\cup_{Y_1} \Sigma_1$ is the bordism from $Y_0$ to $Y_2$ obtained by gluing $\Sigma_1$ and $\Sigma_2$ along $Y_1$, then
\[
E(\Sigma_2\cup_{Y_1} \Sigma_1)=E(\Sigma_2)\circ E(\Sigma_1).
\]
\item $E$ sends disjoint unions to tensor products, \ie $E(Y\amalg Y')= E(Y)\otimes E(Y')$ for the disjoint union $Y\amalg Y'$ of closed $(d-1)$\nb-manifolds $Y$, $Y'$, and 
\[
E(\Sigma\amalg\Sigma')=E(\Sigma)\otimes E(\Sigma')\colon E(Y_0)\otimes E(Y_0')\to E(Y_1)\otimes E(Y_1')
\]
if $\Sigma$ is a bordism from $Y_0$ to $Y_1$ and $\Sigma'$ is a bordism from $Y_0'$ to $Y_1'$. 
\item The vector space $E(Y)$ should depend smoothly on $Y$, and the linear map $E(\Sigma)$ should depend smoothly on $\Sigma$.
\end{enumerate}
The first two requirements can be rephrased by saying that $E$ is a functor from a suitable bordism category $\Bord{d}$ to the category $\TV$ of topological vector spaces. The objects of $\Bord{d}$ are closed $(d-1)$\nb-manifolds; morphisms are bordisms up to diffeomorphism relative boundary. The third condition amounts to saying that the functor $E\colon \Bord{d}\to\TV$ is a {\em symmetric monoidal functor} (with monoidal structure  given by disjoint union on $\Bord{d}$, and (projective) tensor product on $\TV$). Making the last requirement precise is more involved; roughly speaking it means that we need to replace the domain and range categories by their {\em family versions} whose objects are {\em families} of closed $(d-1)$\nb-manifolds (respectively topological vector spaces) parametrized by smooth manifolds (and similarly for morphisms). In technical terms, $\Bord{d}$ and $\TV$ are refined to become fibered categories over the Grothendieck site of smooth manifolds.

\begin{rem} The empty set is the monoidal unit with respect to disjoint union, and hence requirement (3) implies that $E(\emptyset)$ is a $1$\nb-dimensional vector space (here we think of $\emptyset$ as a closed manifold of dimension $d-1$). If $\Sigma$ is a closed $d$\nb-manifold, we can consider $\Sigma$ as a bordism from $\emptyset$ to itself, and hence 
\[
E(\Sigma)\in \Hom(E(\emptyset),E(\emptyset))=\C.
\]
More generally, if $\Sigma$ is a family of closed $d$\nb-manifolds parametrized by some manifold $S$ (\ie $\Sigma$ is a fiber bundle over $S$), then the requirement (4) implies that $E(\Sigma)$ is a smooth function on $S$.
\end{rem}

There are many possible variations of the above definition of field theory by equipping the closed manifolds $Y$ and the bordisms $\Sigma$ with additional structure. For example if the additional structure is a conformal structure, $E$ is called a {\em conformal} field theory; if the additional structure is a Riemannian metric, we will refer to $E$ as a {\em Riemannian} field theory. If no additional structure is involved, it is customary to call $E$ a  {\em topological} field theory, although it would seem better to call it a smooth field theory (and reserve the term `topological' for functors on the bordism category of topological manifolds). 

Our main interest in this paper is in {\em Euclidean field theories}, where the additional structure is a Euclidean structure, \ie a flat Riemannian metric. In physics, the word `Euclidean' is typically used to indicate a Riemannian metric as opposed to a Lorentzian metric without our flatness assumption on that metric, and so our terminology might be misleading (we will stick with it since `Euclidean structure' is a mathematical notion commonly used in rigid geometry, and the  alternative terminology `flat Riemannian field theory' has little appeal). 

An important invariant of a field theory $E$ is its {\em partition function} $Z_E$, obtained by evaluating $E$ on all closed $d$\nb-manifolds $\Sigma$ with the appropriate geometric structure, and thinking of $\Sigma\mapsto E(\Sigma)$ as a function on the moduli stack of closed $d$\nb-manifolds equipped with this structure, see Definition~\ref{def:partition2}. We will only be interested in the partition function of conformal or Euclidean $2$\nb-dimensional field theories restricted to surfaces of genus $1$. Hence the following low-brow definition:

\begin{defn}\label{def:partition} Let $E$ be a conformal or Euclidean field theory of dimension $2$. Then the {\em partition function of $E$} is the function
\[
Z_E\colon \fh\ra \C
\qquad
\tau\mapsto E(T_\tau)
\]
where $\fh$ is the upper half-plane $\{\tau\in\C\mid \im(\tau)>0\}$, and $T_\tau$  is the torus $T_\tau=\C/\Z\tau+\Z$ with the flat metric induced by the standard metric on the complex plane. 
\end{defn}

We note that for $A=
\left(\begin{smallmatrix}
a&b\\ c&d
\end{smallmatrix} \right)
\in SL_2(\Z)$ the torus $T_\tau$ is conformally equivalent (but generally not isometric) to $T_{\tau'}$ for $\tau'=\frac{a\tau+b}{c\tau+d}$. In particular, the partition function of a $2$\nb-dimensional conformal field theory is invariant under the $SL_2(\Z)$\nb-action (but generally not of a Euclidean field theory).  

\subsection{Field theories over a manifold}\label{subsec:FTover}

Another possible additional structure on bordisms $\Sigma$ is to equip them with smooth maps to a fixed manifold $X$. The resulting field theories are called {\em field theories over $X$}. We think of a field theory over $X$ as a family of field theories parametrized by $X$: If $E$ is a field theory over $X$, and $x$ is a point of $X$, we obtain a field theory $E_x$ by defining $E_x(Y):=E(Y\overset{c_x}\ra X)$, where $c_x$ is the constant map that sends every point of $Y$ to $x$, and similarly for bordisms $\Sigma$.

\begin{rem} \label{rem:partition} Let $\Sigma$ be a closed $d$\nb-manifold, $\map(\Sigma,X)$ the space of smooth maps and 
\[
\ev\colon \map(\Sigma,X)\times\Sigma\to X
\]
 the evaluation map. We think of the trivial fiber bundle $ \map(\Sigma,X)\times\Sigma\to  \map(\Sigma,X)$ and the evaluation map as a smooth family of $d$\nb-manifolds with maps to $X$, parametrized by the mapping space. In particular, if $E$ is a $d$\nb-dimensional field theory over $X$, we can evaluate $E$ on $(\map(\Sigma,X)\times\Sigma,\ev)$ to obtain a smooth function $Z_{E,\Sigma}\in C^\infty(\map(\Sigma,X))$ (this mapping space is not a finite dimensional manifold, but that is not a problem, since one can work with presheaves of manifolds). We note that $Z_{E,\Sigma}$ can be interpreted as part of the partition function of $E$.  It follows from requirement (1) that 
\begin{equation}\label{eq:partition}
Z_{E,\Sigma}\quad\text{belongs to}\quad C^\infty(\map(\Sigma,X))^{\Aut(\Sigma)},
\end{equation}
the functions invariant under the action of the automorphisms of $\Sigma$. Here $\Aut(\Sigma)$ is the diffeomorphism group of $\Sigma$ if $E$ is a topological field theory, and the group of structure preserving diffeomorphisms if $E$ is a field theory corresponding to some additional geometric structure.
\end{rem}

We observe that $d$\nb-dimensional topological field theories over $X$ are extremely familiar objects for $d=0,1$: If $E$ is a $0$\nb-dimensional field theory over $X$, we obtain the function $Z_{E,\pt}\in C^\infty(\map(\pt,X))=C^\infty(X)$ associated to the $0$\nb-manifold $\pt$ consisting of a single point. This construction gives an isomorphism between $0$\nb-dimensional topological field theories over $X$ and $C^\infty(X)$. A slightly stronger statement holds: the groupoid of $0$\nb-TFT's over $X$ is equivalent to the discrete groupoid with object set $C^\infty(X)$.

To a vector bundle with a connection $V\to X$ we can associate a $1$\nb-dimensional field theory $E_V$ over $X$ as follows. We think of a point $x\in X$ as a map $x\colon \pt\to X$ and decree that $E_V$ should associate to $x$ the fiber $V_x$; any object in the bordism category is isomorphic to a disjoint union of these, and hence the functor $E_V$ is determined on objects. If $\gamma\colon [a,b]\to X$ is a path in $X$ from $x=\gamma(a)$ to $y=\gamma(b)$, we think of $\gamma$ as a morphism from $x$ to $y$ in the bordism category and define $E_V(\gamma)\colon E_V(x)=V_x\to E_V(y)=V_y$ to be the parallel translation along the path $\gamma$. The proof of the following result  \cite{DST} is surprisingly subtle. Note that smooth paths give morphisms in our bordism category, but piecewise smooth paths do not. Therefore, one even needs to decide how to compose morphisms.

\begin{thm}[\cite{DST}]\label{thm:1TFT}
The groupoid $\TFT{1}(X)$ of $1$\nb-dimensional topological field theories over a manifold $X$ is equivalent to groupoid of finite-dimensional vector bundles over $X$ with connections.
\end{thm}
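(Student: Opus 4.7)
The plan is to exhibit explicit functors $\Phi$ and $\Psi$ in both directions between the groupoids of the statement, and to check that they are mutually inverse up to natural isomorphism. For the forward functor $\Phi$, given a vector bundle with connection $(V,\nabla)$ on $X$, I extend the assignment sketched in the excerpt to a symmetric monoidal functor $E_V\colon \Bord{1}(X)\to\TV$: on objects I apply monoidality together with the fiberwise rule $x\mapsto V_x$, and on a smooth path $\gamma\colon [0,\ell]\to X$ I set $E_V(\gamma)$ to be parallel transport with respect to $\nabla$. Functoriality under gluing is the composition law of parallel transport, monoidality is immediate, and smooth dependence on families follows because parallel transport defines a smooth map out of the space of smooth paths. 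The only subtle issue is that composition in $\Bord{1}(X)$ has to be arranged so that this makes sense; see the last paragraph.

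For the reverse functor $\Psi\colon E\mapsto (V,\nabla)$, I recover $V$ by evaluating the family version of $E$ on the tautological family $\id_X\colon X\to X$ of $0$\nb-manifolds with map to $X$; the axiomatic smoothness of $E$ in families turns the assignment $x\mapsto E(x)$ into a smooth vector bundle $V\to X$. To see that $V$ has constant finite rank, I use that the interval, interpreted as a $U$-shaped bordism from $\emptyset$ to the family $\{x\}\amalg\{x\}$, together with its upside-down partner, produces coevaluation and evaluation maps satisfying the zigzag identities; these duality data force each fiber $V_x$ to be finite-dimensional, and invertibility of parallel transport (paths can be reversed) keeps the rank locally constant.

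The connection $\nabla$ is recovered by differentiating parallel transport. For a smooth curve $\gamma\colon [0,\ell]\to X$ with $\gamma(0)=x$, $E$ produces a family of linear isomorphisms $P_\gamma(s)\colon V_x\to V_{\gamma(s)}$ depending smoothly on $s$; given a tangent vector $\xi\in T_xX$ and a local section $v$ of $V$, I choose $\gamma$ with $\dot\gamma(0)=\xi$ and define $(\nabla_\xi v)_x$ as the $s$\nb-derivative at $0$ of $P_\gamma(s)^{-1} v_{\gamma(s)}$. Reparametrization invariance, tensoriality in $\xi$, and the Leibniz rule follow from the behavior of $E$ on composable short paths and from the symmetric monoidal structure. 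That $\Phi$ and $\Psi$ are mutually inverse is then essentially formal: $\Psi\Phi$ recovers $\nabla$ from its own parallel transport via the defining ODE, and $\Phi\Psi(E)$ agrees with $E$ on smooth paths by construction.

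The principal obstacle, as flagged in the excerpt, is to set up $\Bord{1}(X)$ so that composition is well-defined and compatible with the smooth structure: the concatenation of two smooth paths at a common endpoint is generically only piecewise smooth and hence not a morphism in a category of smooth manifolds. The remedy of \cite{DST} is to take morphisms to be smooth paths of varying length with \emph{sitting instants} (constant near their endpoints), so that smooth concatenation is unambiguous, and then to show that up to natural equivalence the resulting category is insensitive to this convention -- in particular, that $E(\gamma)$ for a sitting-instant path agrees with parallel transport along the underlying geometric curve. Carrying this out coherently with the family/Grothendieck-site refinement of $\Bord{1}$ and with monoidality, and verifying that the resulting functors descend to inverse equivalences of groupoids, is the technical core of the argument; once this is in place the sketch above goes through.
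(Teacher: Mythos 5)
The paper itself gives no proof of this theorem: it only sketches the forward construction $(V,\nabla)\mapsto E_V$ via parallel transport and defers everything else to \cite{DST} (in preparation), flagging exactly the subtlety you mention — smooth paths are morphisms, piecewise smooth ones are not, so even composition needs care. Your forward functor coincides with the paper's sketch, and your inverse route (zigzag duality to force finite-dimensional fibers, differentiation of parallel transport to recover $\nabla$) is the expected one, so the overall plan is aligned with what the authors intend.

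However, there are genuine gaps. First, you resolve the composition problem by passing to a category of paths with sitting instants, but $\TFT{1}(X)$ in this paper is defined as a (family, fibered-over-$\Man$) functor out of the collared bordism category of Definition~\ref{def:RB}/\ref{def:GMBord}: an object over $X$ is a germ $(Y,Y^c,Y^\pm)$ of a $1$\nb-manifold with map to $X$ around a compact $0$\nb-dimensional core, morphisms are germs of bordisms with embedded collar neighborhoods, and composition is gluing along open overlaps — so composition is already well defined there. The real subtlety the paper points to is that objects and morphisms carry germ data (an object is a point of $X$ together with the germ of a curve through it), and one must show that the value of a field theory depends only on the underlying point, respectively that transports along paths with unrelated germs at a shared endpoint can be compared and composed; substituting a sitting-instants category does not prove the stated theorem unless you also prove that the two bordism categories yield equivalent groupoids of field theories, which you do not address. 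Second, on the inverse functor: by Definition~\ref{def:famTV} the family axiom applied to the tautological $X$-family yields only a sheaf of topological $\mathcal O_X$-modules (local triviality is deliberately not assumed), so dualizability gives finite-dimensional fibers but local freeness, the smooth bundle structure, and the linearity/Leibniz/smoothness properties of the recovered $\nabla$ all require arguments with families of paths that you only assert. Since you explicitly defer this "technical core," what you have is a correct plan whose hard steps — the ones the paper calls surprisingly subtle — remain open.
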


\subsection{Supersymmetric field theories}\label{subsec:susyFT1}

Another variant of field theories are {\em supersymmetric field theories of dimension $d|\delta$}, where $\delta$ is a non-negative integer. These are defined as above, but replacing $d$\nb-dimensional manifolds (respectively $(d-1)$\nb-manifolds) by supermanifolds of dimension $d|\delta$ (respectively $(d-1)|\delta$). The previous discussion is included since a supermanifold of dimension $d|0$ is just a manifold of dimension $d$. In order to formulate the right smoothness condition in the supersymmetric case, we need to work with families whose parameter spaces are allowed to be supermanifolds rather than just manifolds.

If $E$ is a $0|1$\nb-dimensional TFT over a manifold $X$ we can consider the function $Z_{E,\Sigma}$ of Remark \ref{rem:partition} for the $0|1$\nb-supermanifold $\Sigma=\R^{0|1}$. It turns out that the algebra of smooth functions on the supermanifold $\map(\R^{0|1},X)$ can be identified with $\Omega^*(X)$, the differential forms on $X$ \cite{HKST}, and that the subspace invariant under the $\Diff(\R^{0|1})$\nb-action is the space of closed $0$\nb-forms. This leads to the following result.

\begin{thm}[\cite{HKST}]\label{thm:0TFT}
The groupoid $\TFT{0|1}(X)$ of $0|1$\nb-dimensional topological field theories over a manifold $X$ is equivalent to the discrete groupoid with objects $\Omega_{cl}^0(X)$.
\end{thm}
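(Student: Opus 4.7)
My plan is to first observe that the target discreteness is essentially automatic from the bordism category: a closed supermanifold of dimension $-1|1$ is empty, so $\Bord{0|1}(X)$ has a single isomorphism class of objects (the empty one), and every morphism is an endomorphism of $\emptyset$, namely a closed $0|1$-manifold (\ie a finite disjoint union of copies of $\R^{0|1}$) equipped with a map to $X$. Any monoidal natural isomorphism $E \Rightarrow E'$ is then determined by its value at $\emptyset$, which monoidality forces to be $\id_{\C}$. Hence $\TFT{0|1}(X)$ is automatically a discrete groupoid, and it suffices to set up a bijection on isomorphism classes.

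Given $E$, I would extract its ``super partition function'' as follows. The trivial family $\map(\R^{0|1},X) \times \R^{0|1} \to \map(\R^{0|1},X)$ equipped with its tautological evaluation map to $X$ is the universal smooth family of connected closed $0|1$-manifolds over $X$. Evaluating $E$ on this family, as in Remark~\ref{rem:partition}, yields a smooth function $Z_E \in C^\infty(\map(\R^{0|1},X))$. Multiplicativity together with smoothness ensures that $E$ is reconstructed from $Z_E$ by pulling back along classifying maps $S \to \map(\R^{0|1},X)$ and taking tensor products across connected components of fibers, so the assignment $E\mapsto Z_E$ is injective.

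Invoking the identification $C^\infty(\map(\R^{0|1},X))\cong\Omega^*(X)$ from \cite{HKST}, I regard $Z_E$ as an inhomogeneous form on $X$, and the invariance condition \eqref{eq:partition} under $\Aut(\R^{0|1})=\Diff(\R^{0|1})$ cuts out the correct subspace. The super Lie group $\Diff(\R^{0|1})$ is the semidirect product $\R^{0|1}\rtimes\R^\times$ generated by the odd translations $\theta\mapsto\theta+\alpha$ and the even dilations $\theta\mapsto a\theta$. Under the identification with $\Omega^*(X)$, a dilation by $a$ acts on a degree-$k$ form by $\omega\mapsto a^k\omega$, while the infinitesimal odd translation $\partial/\partial\theta$ acts as the de Rham differential $d$. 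Thus invariance under dilations forces $Z_E\in\Omega^0(X)$ and invariance under odd translations then forces $dZ_E=0$, \ie $Z_E\in\Omega^0_{cl}(X)$. Conversely, any $\omega\in\Omega^0_{cl}(X)$, viewed as a $\Diff(\R^{0|1})$-invariant smooth function on $\map(\R^{0|1},X)$, defines a field theory $E_\omega$ by pullback along classifying maps and multiplicative extension over disjoint unions; this supplies the inverse.

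The main obstacle, and the real content of the argument, is the identification of superfunctions on $\map(\R^{0|1},X)$ with $\Omega^*(X)$ together with the explicit translation of the $\Diff(\R^{0|1})$-action into the standard de Rham operators (scaling by form degree and the differential $d$). This is the statement that the superpoint $\R^{0|1}$ ``sees'' the de Rham complex of $X$, and is exactly the input imported from \cite{HKST}; once it is in hand, everything else reduces to bookkeeping with the $0|1$-super bordism category and the fact that its only object is empty.
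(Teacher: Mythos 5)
Your argument is correct and follows essentially the same route the paper sketches just before the theorem (and attributes to \cite{HKST}): evaluate $E$ on the universal family over $\map(\R^{0|1},X)$, identify $C^\infty(\map(\R^{0|1},X))$ with $\Omega^*(X)$, and observe that invariance under the dilations and odd translations generating $\Diff(\R^{0|1})$ cuts this down to $\Omega^0_{cl}(X)$, while the emptiness of the objects of the $0|1$-bordism category makes the groupoid discrete. Your added bookkeeping (discreteness, reconstruction of $E$ from $Z_E$, and the converse construction) is exactly the content delegated to \cite{HKST}, so there is nothing genuinely different to compare.
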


The notion of Euclidean structures on manifolds can be generalized to supermanifolds, see \ref{subsec:superEuclidean}. In particular, we can talk about (supersymmetric) Euclidean field theories of dimension $d|\delta$ ($\delta>0$ means that we are talking about a supersymmetric theory, and hence the adjective `supersymmetric' is redundant). If $E$ is a $0|1$\nb-EFT over $X$ (Euclidean field theory of dimension $0|1$ over $X$), then as above we can consider the function $Z_{E,\R^{0|1}}\in C^\infty(\map(\R^{0|1},X))$. Now this function is invariant only under the subgroup $\Iso(\R^{0|1})\subset\Diff(\R^{0|1})$ consisting of the diffeomorphisms preserving the Euclidean structure on $\R^{0|1}$. We show in \cite{HKST} that the invariant subspace 
\[
C^\infty(\map(\R^{0|1},X)^{\Iso(\R^{0|1})}=\Omega^*(X)^{\Iso(\R^{0|1})}
\]
 is the space $\Omega_{cl}^{ev}(X)$ of closed, even-dimensional forms on $X$. This leads to 

\begin{thm}[\cite{HKST}] \label{thm:0EFT} The groupoid $\EFT{0|1}(X)$ of $0|1$\nb-dimensional Euclidean field theories over a manifold $X$ is equivalent to the discrete groupoid with objects $\Omega_{cl}^{ev}(X)$.
\end{thm}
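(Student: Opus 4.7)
The plan is to follow the argument used for Theorem~\ref{thm:0TFT}, replacing the full diffeomorphism group $\Diff(\R^{0|1})$ by its isometry subgroup. Since every closed $0|1$-supermanifold is a finite disjoint union of copies of $\R^{0|1}$, the disjoint union axiom reduces a $0|1$-EFT over $X$ to its value on the universal family $\map(\R^{0|1},X)\times\R^{0|1}\to\map(\R^{0|1},X)$ equipped with the evaluation map. By Remark~\ref{rem:partition} this value is a smooth function $Z_E\in C^\infty(\map(\R^{0|1},X))$, and via the identification $C^\infty(\map(\R^{0|1},X))\cong\Omega^*(X)$ (already used in the proof of Theorem~\ref{thm:0TFT} and established in \cite{HKST}) the problem is reduced to computing the subspace cut out by $\Iso(\R^{0|1})$.

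Next I would describe $\Iso(\R^{0|1})$ explicitly. The $S$-points of $\Diff(\R^{0|1})$ are the affine transformations $\theta\mapsto b+a\theta$ with $a\in\cO(S)^\times_{ev}$ and $b\in\cO(S)_{odd}$, giving $\Diff(\R^{0|1})\cong\R^{0|1}\rtimes\R^\times$. The Euclidean structure forces $a\in\{\pm 1\}$, so $\Iso(\R^{0|1})\cong\R^{0|1}\rtimes\{\pm 1\}$. Under the identification with forms, the infinitesimal action of the odd translation $\partial/\partial\theta$ is the de Rham differential $d$, so invariance under the super translation subgroup $\R^{0|1}$ is equivalent to $Z_E$ being closed. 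The reflection $\theta\mapsto-\theta$ acts on $\Omega^k(X)$ by $(-1)^k$, so imposing the additional $\{\pm 1\}$-invariance restricts to forms of even degree. Together this yields $Z_E\in\Omega^{ev}_{cl}(X)$, in precise contrast with the TFT case, where invariance under the full scaling group $\R^\times$ kills everything but $\Omega^0_{cl}(X)$.

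Conversely, each $\omega\in\Omega^{ev}_{cl}(X)$ gives rise to a $0|1$-EFT over $X$ by declaring its partition function on the universal family to be $\omega$ and extending by disjoint union and composition; compatibility with the bordism axioms follows from the multiplicativity of pullback of forms along the relevant evaluation maps. The claim that $\EFT{0|1}(X)$ is \emph{discrete} then falls out of the same partition function analysis: every natural isomorphism between two such $E$'s is forced to be the identity on the universal family, hence on all families obtained from it by pullback.

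The main obstacle, inherited from the techniques of \cite{HKST}, is the reconstruction step: showing that a $0|1$-EFT over $X$ is genuinely determined by $Z_E$ requires checking that every family of closed $0|1$-manifolds over an arbitrary super parameter space, equipped with a map to $X$, is pulled back (in a fashion compatible with the Euclidean structure) from the universal family $(\map(\R^{0|1},X)\times\R^{0|1},\ev)$. This is where the careful setup of the fibered category of Euclidean bordisms and its family version becomes essential, and where the proof really uses more than the formal manipulations of the preceding paragraphs.
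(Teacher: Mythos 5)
Your proposal follows essentially the same route as the paper's (sketched) argument: the paper likewise reduces a $0|1$-EFT over $X$ to its partition function on the family $\map(\R^{0|1},X)\times\R^{0|1}$, uses the identification $C^\infty(\map(\R^{0|1},X))\cong\Omega^*(X)$, and identifies the $\Iso(\R^{0|1})$-invariants (odd translations giving closedness via $d$, the flip $\theta\mapsto-\theta$ giving evenness) as $\Omega^{ev}_{cl}(X)$, deferring the reconstruction and discreteness details to \cite{HKST} exactly where you flag them. So your argument is correct and matches the paper's approach.
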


We are intrigued by the fact that field theories over $X$ are quite versatile objects; depending on the dimension and the type of field theory  we get such diverse objects as smooth functions ($0$\nb-TFT's),  vector bundles with connections ($1$\nb-TFT's) or closed even-dimensional differential forms ($0|1$\nb-EFT's). Higher dimensional field theories over $X$ typically don't have interpretation as classical objects. For example, Dumitrescu has shown that a $\Z/2$\nb-graded vector bundle $V\to X$ equipped with a Quillen superconnection $\A$ leads to a $\EFT{1|1}$ $E_{V,\A}$ over $X$ \cite{Du}. However, not every $\EFT{1|1}$ over $X$ is isomorphic to one of these. 

There are {\em dimensional reduction} constructions that relate field theories of different dimensions. For example, there is a {\em dimensional reduction functor}
\begin{equation}\label{eq:dim_red}
\opn{red}\colon \EFT{1|1}(X)\ra \EFT{0|1}(X)
\end{equation}
In his thesis \cite{Ha}, Fei Han has shown that if $V\to X$ is a vector bundle with a connection $\nabla$, then the image of the Dumitrescu field theory $E_{V,\nabla}\in \EFT{1|1}(X)$ under this functor, interpreted as a closed differential form via Theorem \ref{thm:0EFT}, is the Chern character form of $(V,\nabla)$.
\subsection{Concordance classes of field theories}
Next we want to look at field theories over $X$ from a topological perspective. A smooth map $f\colon X\to Y$ induces a functor $\EB{d|\delta}(X)\to \EB{d|\delta}(Y)$ and by pre-composition this gives a functor
\[
f^*\colon \EFT{d|\delta}(Y)\ra \EFT{d|\delta}(X).
\]
In other words, field theories over manifolds are {\em contravariant} objects, like smooth functions, differential forms or vector bundles with connections. 

\begin{defn} Let $E_\pm$ be two field theories over $X$ of the same type and dimension. Then $E_+$ is {\em concordant} to $E_-$ if there is a field theory $E$ over $X\times \R$ such that the two bundles $i_\pm^*E$ are isomorphic to $\pi_\pm^*E_\pm$. Here $i_\pm\colon X \times (\pm 1, \pm \infty) \into X\times \R$ are the two inclusion maps and $\pi_\pm\colon X \times (\pm 1, \pm \infty) \to X$ are the projections. We will write $\EFT {d|\delta}[X]$ for the set of {\em concordance classes} of ${d|\delta}$\nb-dimensional EFT's over $X$.
\end{defn}

We remark that passing to concordance classes forgets `geometric information' while retaining `homotopical information'. More precisely, the functors from manifolds to sets, $X\mapsto \EFT {d|\delta}[X]$, are homotopy functors. For example, two vector bundles with connections are concordant \Iff the vector bundles are isomorphic; two closed differential forms are concordant \Iff they represent the same de Rham cohomology class. In particular, Theorems \ref{thm:0TFT}  and \ref{thm:0EFT} have the following consequence.

\begin{cor}[\cite{HKST}]\label{cor:dR_coh} $\TFT{0|1}[X]$ and $H^0_{dR}(X)$ are naturally isomorphic as rings. Similarly, there is an isomorphism between $\EFT{0|1}[X]$ and $H^{ev}_{dR}(X)$, the even-dimensional de Rham cohomology of $X$.
\end{cor}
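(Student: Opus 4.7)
The plan is to transfer both sides through the groupoid equivalences of Theorems~\ref{thm:0TFT} and~\ref{thm:0EFT}, thereby reducing the corollary to the classical identification of concordance classes of closed forms with de Rham cohomology, plus a check that the ring structures match.

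First I would observe that the equivalences of Theorems~\ref{thm:0TFT} and~\ref{thm:0EFT} are natural in $X$, so a concordance $E$ on $X\times\R$ between $0|1$-field theories $E_\pm$ over $X$ corresponds to a closed form $\Omega$ on $X\times\R$ (of degree $0$ in the TFT case and of even total degree in the EFT case) such that $\Omega|_{X\times(1,\infty)} = \pi_+^*\omega_+$ and $\Omega|_{X\times(-\infty,-1)} = \pi_-^*\omega_-$, where $\omega_\pm$ is the form corresponding to $E_\pm$. Concordance of field theories thus becomes concordance of closed forms in the usual smooth sense.

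Next I would prove that two closed forms $\omega_\pm$ on $X$ are concordant in this sense \Iff they represent the same de Rham class. For the forward implication, pull $\Omega$ back along two sections $i_\pm\colon X\to X\times\R$ landing in the two end regions: since $i_+$ and $i_-$ are smoothly homotopic, $i_+^*\Omega=\omega_+$ and $i_-^*\Omega=\omega_-$ are cohomologous. For the converse, given $\omega_+-\omega_-=d\eta$, choose a smooth bump $\chi\colon\R\to[0,1]$ with $\chi\equiv 0$ on $(-\infty,-1]$ and $\chi\equiv 1$ on $[1,\infty)$, let $\pi\colon X\times\R\to X$ be the projection, and define
\[
\Omega := \pi^*\omega_- + d\bigl(\chi\cdot\pi^*\eta\bigr).
\]
Then $\Omega$ is closed, restricts to $\pi^*\omega_-$ on $X\times(-\infty,-1)$, and to $\pi^*\omega_- + d\pi^*\eta = \pi^*\omega_+$ on $X\times(1,\infty)$, giving the desired concordance. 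This establishes additive bijections $\TFT{0|1}[X]\cong H^0_{dR}(X)$ and $\EFT{0|1}[X]\cong H^{ev}_{dR}(X)$.

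Finally, the ring structure on concordance classes comes from tensor product of field theories. I would argue that under the equivalences of Theorems~\ref{thm:0TFT} and~\ref{thm:0EFT} the tensor product corresponds to pointwise multiplication of functions in the TFT case and to the wedge product of even-degree forms in the EFT case; since the wedge product of closed forms induces the cup product on $H^*_{dR}$, and the even-degree part is closed under it, the additive isomorphisms upgrade to ring isomorphisms. The main obstacle is this last step: one must verify that the equivalences of Theorems~\ref{thm:0TFT} and~\ref{thm:0EFT} are symmetric monoidal, by tracing how $Z_{E\otimes E',\R^{0|1}}$ factors through the identification $C^\infty(\map(\R^{0|1},X))\cong\Omega^*(X)$ of \cite{HKST} as the product of $Z_{E,\R^{0|1}}$ and $Z_{E',\R^{0|1}}$, which amounts to understanding the algebra structure on functions on the internal mapping supermanifold.
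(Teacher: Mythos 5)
Your proposal is correct and follows essentially the same route as the paper: the corollary is deduced from Theorems~\ref{thm:0TFT} and~\ref{thm:0EFT} together with the standard fact that two closed forms are concordant \Iff they are cohomologous, with multiplicativity coming from the algebra identification $C^\infty(\map(\R^{0|1},X))\cong\Omega^*(X)$ of \cite{HKST}. You merely spell out the bump-function concordance argument and the monoidal compatibility that the survey leaves implicit.
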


\subsection{Field theories of non-zero degree}
Corollary \ref{cor:dR_coh} suggest the question whether there is a field theoretic description of $H^n_{dR}(X)$. The next theorem gives a positive answer to this question using the notion of `degree $n$ field theories' which will be discussed in Section~\ref{sec:twisted}. 

\begin{thm}[\cite{HKST}] Let $X$ be a smooth manifold. Then there are equivalences of groupoids computing $0|1$-field theories of degree~$n$ as follows:
\[
\TFT{0|1}^n(X)\cong \Omega_{cl}^{n}(X)
\qquad
\EFT{0|1}^n(X)\cong 
\begin{cases}\Omega^{ev}_{cl}(X)&n\text{ even}\\
\Omega^{odd}_{cl}(X)&n\text{ odd}
\end{cases}\\
\]
Moreover, there are isomorphisms of abelian groups
\begin{align*}
\TFT{0|1}^n[X]&\cong H_{dR}^n(X)\\
\EFT{0|1}^n[X]& \cong
\begin{cases}H^{ev}_{dR}(X)&n\text{ even}\\
H^{odd}_{dR}(X)&n\text{ odd}
\end{cases}
\end{align*}
These isomorphisms are compatible with the multiplicative structure on both sides (given by the tensor product of field theories on the left, and the cup product of cohomology classes on the right).
\end{thm}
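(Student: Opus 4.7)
The plan is to extend the cocycle-level analysis of Theorems~\ref{thm:0TFT} and~\ref{thm:0EFT} to the twisted setting, then pass to concordance classes. The core identification we reuse is
\[
C^\infty(\map(\R^{0|1},X)) \;\cong\; \Omega^*(X),
\]
and the fact that a $0|1$-dimensional field theory $E$ over $X$ is determined by the function it assigns to the tautological family
$\ev\colon \map(\R^{0|1},X)\times \R^{0|1}\to X$, subject to invariance under the symmetry group of $\R^{0|1}$.

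First, I would unpack the notion of a degree $n$ field theory from Section~\ref{sec:twisted} and identify the value of such a theory on the generating super-point. Under the identification above, the odd translations in $\R^{0|1}$ act as the de Rham differential $d$, so invariance under this piece forces the associated function to be closed. The multiplicative part of $\Aut(\R^{0|1})\cong \R^{0|1}\rtimes \R^\times$ acts by the $\Z$-grading on $\Omega^*(X)$, while the multiplicative part of $\Iso(\R^{0|1})\cong \R^{0|1}\rtimes \Z/2$ records only the $\Z/2$-parity. The degree $n$ twist shifts the weight with which these groups act on the target: in the topological case an untwisted theory picks out $\Omega^0_{cl}$ and the shift by $n$ slides this to $\Omega^n_{cl}$; in the Euclidean case untwisted invariants are $\Omega^{ev}_{cl}$ and the shift by $n$ flips parity when $n$ is odd, yielding $\Omega^{odd}_{cl}$.

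Second, I would promote these observations to equivalences of groupoids. The assignment $E\mapsto Z_{E,\R^{0|1}}$ defines a functor from $\TFT{0|1}^n(X)$ (resp.\ $\EFT{0|1}^n(X)$) to the discrete groupoid on the appropriate space of closed forms. Essential surjectivity follows by reconstructing a degree $n$ theory from any prescribed closed form, using that in the $0|1$ case the bordism data is fully encoded by the super-point together with the action of its symmetry group. Faithfulness reduces to the observation that no higher gauge data is present. Once these equivalences are in place, concordance classes are computed by the standard integration argument: a concordance corresponds to a twisted-closed form $\omega\in \Omega^*(X\times \R)$, whose decomposition $\omega=\alpha+dt\wedge\beta$ yields, after integrating $\beta$ over $t\in\R$, a primitive exhibiting $\omega_+-\omega_-$ as exact on $X$; the converse interpolation via a bump function is straightforward. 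This gives the claimed isomorphisms with $H^n_{dR}$, $H^{ev}_{dR}$, and $H^{odd}_{dR}$.

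The main obstacle will be making the degree $n$ twist precise and verifying the claimed interaction with the automorphism versus isometry action of $\R^{0|1}$: namely, that topological twists live naturally in $\Z$ (reflecting the full $\R^\times$-weight), while Euclidean twists descend to $\Z/2$ (reflecting the reduction to $\{\pm 1\}$ inside $\Iso(\R^{0|1})$). Once this twist is set up and identified with the appropriate shift on $\Omega^*(X)$, the remaining verifications parallel the degree zero case. Finally, multiplicativity is essentially automatic: the tensor product of a degree $n$ and a degree $m$ theory evaluates on $\R^{0|1}$ as the product of the corresponding functions on $\map(\R^{0|1},X)$, which under our identification is the wedge product of differential forms, and this descends to the cup product on de Rham cohomology.
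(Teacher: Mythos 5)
Your proposal is correct and follows essentially the same route the paper (and the cited reference \cite{HKST}) indicates: identify $C^\infty(\map(\R^{0|1},X))$ with $\Omega^*(X)$, let the odd translations enforce closedness, read off the degree respectively parity from the $\R^\times$- versus $\{\pm 1\}$-weight imposed by the degree~$n$ twist, and compute concordance classes by the standard integration/bump-function argument, with multiplicativity coming from products of functions being wedge products. The one caveat is that the survey states this theorem without proof, so the precise construction of the twists and the identification $T^n\otimes T^m\cong T^{n+m}$ needed for the ring structure are exactly the details deferred to \cite{HKST}, as you yourself flag as the main remaining work.
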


We note that Fei Han's result mentioned at the end of section \ref{subsec:susyFT1} implies the commutativity of the diagram, interpreting the Chern-character as dimensional reduction:
\[
\xymatrix{
K^0(X)\ar[r]^<>(.5){ch}\ar[d]&H^{ev}_{dR}(X)\ar[d]^\cong\\
\EFT{1|1}[X]\ar[r]^<>(.5){\red}&\EFT{0|1}[X]
}
\]
Here $ch$ is the Chern character, $\red$ is the dimensional reduction functor \eqref{eq:dim_red}, and the left arrow is induced by mapping a vector bundle with connection to its Dumitrescu field theory. We believe that this arrow is an isomorphism because a very much related description of $K^0(X)$ via $1|1$-EFT's was given in \cite{HoST}.

\subsection{$\EFT{2|1}$'s and topological modular forms}

The definition of a partition function for a $2$\nb-dimensional Euclidean field theory (see Definition \ref{def:partition}) can be extended to Euclidean field theories of dimension $2|1$ (see Definition \ref{def:partition2}). For $E\in \EFT{2|1}$, the restriction $Z^{++}_E$ of its partition function to the non-bounding spin structure $++$ on the torus can be considered as a complex valued function on the upper half plane $\fh$, see Remark~\ref{rem:partition2}. As mentioned after Definition \ref{def:partition}, the partition function of a {\em conformal} $2$\nb-dimensional field theory is smooth (but not necessarily holomorphic) and invariant under the usual $SL_2(\Z)$\nb-action, while no modularity properties would be expected for {\em Euclidean} field theories. It turns out that for a $\EFT{2|1}$ $E$ the supersymmetry forces $Z^{++}_E$ to be holomorphic and $SL_2(\Z)$-invariant:

\begin{thm}[\cite{ST3},\cite{ST6}]\label{thm:modular} Let $E$ be a Euclidean field theory of dimension $2|1$. Then the function $Z^{++}_E\colon \fh\to \C$ is a holomorphic modular function with integral Fourier coefficients. Moreover, every such function arises as $Z^{++}_E$ from a $2|1$-Euclidean field theory $E$.
\end{thm}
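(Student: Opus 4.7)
The plan is to verify each claimed property of $Z_E^{++}$ in turn — modular invariance, holomorphy, integrality of Fourier coefficients — by analyzing the moduli stack of $2|1$-dimensional super-Euclidean tori with ${++}$ spin structure, and then to address surjectivity by an explicit construction from super vertex operator algebras.

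\textbf{Modularity and holomorphy.} I would first identify $Z_E^{++}$ as a function on the moduli stack of ${++}$ supertori. In coordinates $(z,\bar z,\theta)$ on $\R^{2|1}$ with $\theta$ odd, a ${++}$ supertorus is a quotient $\R^{2|1}/\Lambda_\tau$ with $\Lambda_\tau = \Z \oplus \Z\tau$ in the even translations. The key new ingredient, absent in the purely $2$-dimensional Euclidean case, is that choosing a ${++}$ spin structure provides a square-root $\mu$ of an even rescaling $z\mapsto \mu^2 z$, and combined with the rotation subgroup this produces a super-Euclidean isomorphism $\R^{2|1}/\Lambda_{A\tau}\cong \R^{2|1}/\Lambda_\tau$ for $A\in SL_2(\Z)$, with $\mu = c\tau+d$. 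Requirement~(1) then forces $Z_E^{++}(\tau)=Z_E^{++}(A\tau)$. Holomorphy comes from supersymmetry: the super-translation Lie algebra contains an odd $Q$ with $[Q,Q]=2\partial_{\bar z}$, so anti-holomorphic variation of a lattice vector is $Q$-exact. In the schematic operator picture $Z_E^{++}(\tau,\bar\tau)=\str_{E(S^1_{++})}\,e^{2\pi i(\tau H-\bar\tau\bar H)}$ with $\bar H = [Q,Q^\dagger]$, the supertrace kills $Q$-exact operators and hence the $\bar\tau$-dependence. Rigorously this is a smoothness argument (Requirement~(4)) applied to the two-parameter $(\tau,\bar\tau)$-family: the $Q$-action lifts to an odd vector field on the base that differentiates $Z_E^{++}$ to zero in the $\bar\tau$-direction.

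\textbf{Integrality.} On the circle $S^1_{++}$ with its non-bounding spin structure, $E(S^1_{++})$ is a $\Z/2$-graded topological vector space on which the rotation Hamiltonian $H$, obtained from gluing long thin cylinders, has discrete spectrum with finite-dimensional eigenspaces — a standard trace-class estimate in the NS sector. Expanding $Z_E^{++}(\tau)=\sum_n a_n q^n$ in $q=e^{2\pi i\tau}$, each $a_n$ is the super-dimension of the $n$-th energy eigenspace and hence lies in $\Z$.

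\textbf{Surjectivity.} This I expect to be the main obstacle. The target ring of holomorphic $SL_2(\Z)$-invariant functions on $\fh$ meromorphic at the cusp with integral $q$-expansion is $\Z[j]$, so it suffices to realize a $\Z$-algebra generating set. The plan is to build the required $\EFT{2|1}$'s from super vertex operator algebras of central charge zero: to each such SVOA one associates, via the super-Riemann-surface / gluing formalism developed in \cite{ST3,ST6}, a functor on the $2|1$-Euclidean bordism category whose ${++}$ partition function is the graded super-character. Natural inputs are lattice SVOAs, the Conway module $V^{f\natural}$ and its twisted relatives, and tensor products (which realize the ring operation on $\Z[j]$); suitable combinations then produce $j-744$ and eventually all of $\Z[j]$. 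The genuinely delicate step — and the one I expect to require most work — is the rigorous upgrade from SVOA data to a smooth family functor on $\EB{2|1}$ compatible with gluing and the ${++}$ spin structure; once this functorial passage is in place, surjectivity reduces to the algebraic fact that graded super-characters of the above SVOAs generate $\Z[j]$ as a ring.
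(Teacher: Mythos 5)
Your proposal contains two genuine gaps. First, the modularity mechanism is wrong as stated: the super Euclidean group of the paper is $\E^{2|1}\rtimes Spin(2)$, which contains \emph{no dilations}, so a super Euclidean isomorphism $\R^{2|1}/\Lambda_{A\tau}\cong\R^{2|1}/\Lambda_\tau$ at a fixed scale would induce an isometry of the reduced flat tori, which does not exist in general. The correct statement is only $T_{\ell,\tau}\cong T_{\ell|c\tau+d|,\,\frac{a\tau+b}{c\tau+d}}$, so $SL_2(\Z)$-invariance holds for $Z_E(\ell,\tau)$ on $\R_+\times\fh$, and the entire difficulty for a non-conformal (Euclidean) theory is to kill the dependence on the scale $\ell$ — a parameter your proposal never mentions. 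In the paper this is done by a genuinely different route: supersymmetry (an odd $B_\tau$ with $B_\tau^2=-\partial A^+_\tau/\partial\bar\tau$, extracted from the semigroup of supercylinders over $\fh^{2|1}$) forces the cancellation $\sdim V_{a,b}=0$ for $b\neq 0$, giving $Z(\ell,\tau)=\sum_a \sdim V_{a,0}\,q^a$ with \emph{integer} coefficients and integer exponents; then continuity in $\ell$ plus integrality forces $\ell$-independence, and only then does the $SL_2(\Z)$-invariance on $\R_+\times\fh$ yield a modular function. Your holomorphy and integrality sketches are in the right spirit, but they lean on an adjoint $Q^\dagger$ and a "standard trace-class estimate" that are not available in this topological-vector-space setting; the paper instead proves that $A_\tau=E(C_{\ell,\tau})$ is nuclear because the cylinder is a \emph{thick} morphism (it factors through $R_\tau$ and $L_0$), which is what makes the generalized eigenspaces finite dimensional and the $q$-expansion bounded below. (Also, the $++$ sector is the doubly periodic, i.e.\ Ramond–Ramond, sector, not NS.)

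Second, the surjectivity plan does not work as proposed. Degree-zero $2|1$-Euclidean field theories correspond to "central charge zero" data, while lattice SVOAs and the Conway module have nonzero central charge; used naively they would produce twisted theories of nonzero degree (an anomaly you would have to cancel), and in any case the passage from an SVOA to a smooth functor on $\EB{2|1}$ — which you acknowledge as the main open step — is precisely the content one would need to supply. The paper's route is far more elementary: a generators-and-relations presentation of the conformal Euclidean bordism category reduces the construction to explicit data $(V,\lambda,\rho(\tau))$, where $V$ is a Fr\'echet completion of $\bigoplus_k\C^{a_k}$; this realizes any $SL_2(\Z)$-invariant holomorphic $f$ with non-negative integral coefficients, signs are handled by regrading $V^+$, and since powers of $j$ (and $\Delta^{-1}$) have non-negative coefficients, sums and tensor products realize all of $\Z[j]$. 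Finally, because the constructed $\rho(\tau)$ depends holomorphically on $\tau$, one can take $B^s\equiv 0$ in the supersymmetric-extension criterion, so these theories do extend to $\EFT{2|1}$'s — no vertex-algebra input is needed.
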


We recall that a holomorphic modular function is a holomorphic function $f\colon \fh\to \C$ which is meromorphic at $\tau=i\infty$ and is $SL_2(\Z)$-invariant. A modular function is {\em integral} if the coefficients $a_k$ in its $q$\nb-expansion $f(\tau)=\sum_{k= -N}^\infty a_kq^k$, $q=e^{2\pi i\tau}$ are integers. The restriction on partition functions coming from the above theorem is quite strong since any (integral) holomorphic modular function is an (integral) polynomial in the $j$-function. 

 In the following discussion, we use $2|0$-dimensional Euclidean field theories of degree~$n$ which will be explained in the following section. These use the bordism category of Euclidean {\em spin} manifolds. 

\begin{thm}[\cite{ST6}]\label{thm:periodicity} There is a field theory $P\in \EFT{2|0}^{-48}$ with partition function $Z_P$ equal to the discriminant squared, which is a periodicity element in the sense that multiplication by $P$ gives an equivalence of groupoids
\[
\EFT{2|0}^n(X)\overset{\cong}\ra \EFT{2|0}^{n-48}(X)
\]
\end{thm}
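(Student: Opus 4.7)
The plan has two essentially independent parts: construct the specific theory $P$ with partition function $\Delta^2$ and degree $-48$, and then show that tensoring with $P$ is invertible in a functorial way.

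For the construction, I would realize $P$ explicitly as a tensor product of $48$ copies of an elementary ``chiral free-field'' $2|0$\nb-EFT (\eg a chiral fermion or a $bc$\nb-ghost block), arranged so that each copy contributes degree $-1$ and a factor of $\eta(\tau)$ to the partition function on the flat spin torus $(T_\tau,{++})$. The degree comes from taking the state space on the spin circle to be the irreducible module for $\Cl_{-1}$, which supplies the Clifford degree shift in the sense of Section~\ref{sec:twisted}. Tensoring $48$ such blocks then produces $P\in\EFT{2|0}^{-48}$ with $Z_P(\tau)=\eta(\tau)^{48}=\Delta(\tau)^2$ on $(T_\tau,{++})$, and the evaluation on the remaining spin structures on the torus is fixed by modular-covariance considerations. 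The construction is manifestly pulled back from the one-point space, so it defines an element of $\EFT{2|0}^{-48}(X)$ for every smooth $X$.

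For the periodicity, I would construct a candidate inverse $P^{-1}\in\EFT{2|0}^{+48}$ from $48$ copies of the dual chiral block (the one with degree $+1$ and reciprocal $\eta(\tau)^{-1}$ factor), and verify $P\otimes P^{-1}\cong\mathbf 1\in\EFT{2|0}^{0}(X)$. The crucial structural facts are: (i) $\Delta^2$ is nowhere zero on $\fh$, so its reciprocal is a legitimate smooth function; (ii) the Clifford modules assigned to circles by $P$ and $P^{-1}$ are mutually inverse as $\Z/2$\nb-graded bimodules over the relevant algebras, so they tensor to the standard Morita trivialization; and (iii) the free propagators on spin surfaces compose to identities on the dual pair. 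Once this isomorphism is in hand, the functors $E\mapsto E\otimes P$ and $F\mapsto F\otimes P^{-1}$ are mutually quasi-inverse, producing the claimed equivalence of groupoids $\EFT{2|0}^n(X)\overset{\cong}{\ra}\EFT{2|0}^{n-48}(X)$.

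The main obstacle is upgrading the single-surface computation to a natural, gluing-compatible trivialization of $P\otimes P^{-1}$ in arbitrary smooth families of Euclidean spin surfaces parametrized by a supermanifold. I expect the technical heart of the proof to be the anomaly-cancellation verification in families together with the identification $Z_P = \Delta^2$ on the universal family; the latter reduces, via the modular-forms classification for $2|0$ spin partition functions (the spin-refined analogue of the mechanism behind Theorem~\ref{thm:modular}), to a comparison of leading $q$\nb-coefficients. A secondary difficulty is to exhibit the isomorphism $P\otimes P^{-1}\simeq\mathbf 1$ compatibly with the action of the Euclidean isometry groups and with horizontal composition of bordisms; here I expect a locality/Mayer--Vietoris argument to reduce the check to the pair of pants and the annulus, where it becomes a direct computation with the chiral building blocks.
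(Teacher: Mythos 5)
Your general strategy (a free-fermion/Clifford--Fock construction whose torus value is a power of $\eta$, with invertibility coming from a Morita-type duality of the circle modules) is the same one the paper uses, but two steps in your outline hide the actual content of the theorem. First, nothing in your proposal explains why the answer is $48$ copies rather than $24$, i.e.\ why $Z_P=\Delta^2$ and not $\Delta$. You dismiss the values on the other spin structures with ``modular-covariance considerations,'' but that is exactly where the obstruction sits: the partition function of a degree~$-n$ theory is an $SL_2(\Z)$\nb-equivariant section over the moduli stack of \emph{all four} spin tori, the element $T=\left(\begin{smallmatrix}1&1\\0&1\end{smallmatrix}\right)$ interchanges the components $\fh^{+-}$ and $\fh^{--}$, and for $n=24$ the anti-periodic sectors carry the factor $(q^{-1/48})^{24}=q^{-1/2}$, which changes sign under $\tau\mapsto\tau+1$. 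So $\eta^{24}$ is modular on $\fh^{++}$ yet the full section fails equivariance, and this is precisely what forces $n=48$ in the paper's argument. A proof that never leaves the non-bounding spin structure, or that ``compares leading $q$\nb-coefficients,'' cannot detect this and would equally well (and wrongly) produce a degree $-24$ periodicity. (Relatedly, the paper needs the degree to be even already to make the circle module a Morita equivalence, since $\Cl(\C_0)$ appears once in the periodic sector and is not Morita-trivial, while $\Cl(\C_0)^{\otimes 2}$ is; your single degree $-1$ block is not invertible, which is consistent with your plan of inverting only the $48$-fold tensor, but it signals that the sector-by-sector bookkeeping matters.)

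Second, your proposed verification scheme -- a locality/Mayer--Vietoris reduction to the pair of pants and the annulus -- does not fit the Euclidean ($=$ flat) bordism category used here. Because all metrics are flat, the only connected bordisms with connected core are the cylinders $C_\tau$, the caps $L_\tau,R_\tau$, and the tori $T_\tau$; there is no pair of pants, and these theories are not extended/local, so no Mayer--Vietoris principle is available. The paper instead works from a generators-and-relations description of $\CEB{2|0}$: one writes $T^{-1}(K^s)$ explicitly as a restricted tensor product of Clifford algebras $\Cl(H(\C_m))$ (plus one $\Cl(\C_0)$ in the periodic sector), defines $P(K^s)$ as the corresponding tensor product of irreducible graded modules, specifies $P(C^s_\tau)$ on the unit by the explicit elements $b_m(\tau)=1+(1-q^m)\tfrac{ef}{2}$, and then checks only two things: additivity under gluing of cylinders, and $SL_2(\Z)$\nb-equivariance of the resulting section $Z_P$ over the four-component moduli stack, with the torus values obtained from the cylinders by (super)traces. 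Your outline would need to be reorganized along these lines; as written, the family/anomaly step you flag as ``the technical heart'' is aimed at checks the flat bordism category does not require, while the check it does require (equivariance across spin structures, which is where $48$ comes from) is missing.
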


\begin{conj} 
There is an isomorphism $\EFT{2|1}^n_{loc}[X]\cong \TMF^n(X)$ compatible with the multiplicative structure. Here $\TMF^*$ is the $24^2$ periodic cohomology theory of topological modular forms mentioned above. The periodicity class has modular form the 24th power of the discriminant. We expect that our 48 periodicity will turn into a $24^2$ periodicity after building in the locality. 
\end{conj}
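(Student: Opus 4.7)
The plan is to upgrade the partition-function result of Theorem \ref{thm:modular} to an equivalence of cohomology theories. The strategy has four main steps: (i) make precise the meaning of locality in the $2|1$-dimensional Euclidean setting, (ii) assemble the groupoids $\EFT{2|1}_{loc}^n(X)$ into a representing spectrum $\mathbf{E}$, (iii) construct a comparison map $\Phi\colon \mathbf{E} \to \TMF$, and (iv) verify $\Phi$ is a weak equivalence of spectra.

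For step (i) I would extend the symmetric monoidal bordism category of $2|1$-Euclidean manifolds to a symmetric monoidal bicategory by adjoining $0|1$-dimensional super-Euclidean manifolds as objects and $1|1$-Euclidean bordisms as $1$-morphisms, with $2|1$-bordisms with corners as $2$-morphisms. Target values should lie in a symmetric monoidal bicategory of super von Neumann algebras with bimodules and bounded intertwiners, along the lines of the conformal-nets framework of Bartels--Douglas--Henriques. Verifying adjoints and dualizability in this super-Euclidean context is delicate because the flat Riemannian geometry prevents one from invoking the cobordism hypothesis directly; one likely needs a rigid geometric version analogous to what was done in dimension $1|1$ in \cite{HoST}.

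For step (ii), the presheaf $X \mapsto \EFT{2|1}_{loc}^n(X)$ is homotopy invariant by construction, and after passing to concordance classes Mayer--Vietoris for open covers should reduce to a local-to-global property of the $2|1$-Euclidean bordism bicategory; Brown representability then furnishes the spectrum $\mathbf{E}$. For step (iii), evaluated at a point the fully local theory should produce a sheaf of $E_\infty$-ring spectra on the moduli stack $\mathcal{M}_{ell}$ of super-elliptic curves: the algebra of operators attached to an elliptic curve $T_\tau$ assembles, via locality, into an $E_\infty$-ring whose value on the \'etale cover by formal neighborhoods matches the Hopkins--Miller sheaf $\cO^{top}$, and taking global sections then produces the map $\Phi$. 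Compatibility with the ring structure follows from the tensor-product symmetric monoidal structure on field theories.

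The main obstacle is step (iv), specifically accounting for the $2$- and $3$-primary torsion in $\TMF^*(\pt)$ -- classes such as $\eta$, $\nu$ and their products -- which are invisible to the partition function $Z^{++}_E$. One expects these to arise from the extra data that locality records on points and intervals, but extracting for example $\pi_1\TMF \cong \Z/2$ from a purely geometric construction is notoriously subtle and will require a careful anomaly analysis of super-circles and super-pairs-of-pants. The same step must convert the $48$-periodicity of Theorem \ref{thm:periodicity} into the $576 = 24^2$-periodicity of $\TMF$: in the fully local setting the periodicity class $P$ should descend not to $\Delta^2$ but to a class represented by $\Delta^{24}$, because the coherence constraints of a fully extended theory force a periodicity class to be a permanent cycle in the descent spectral sequence for $\cO^{top}$, and $24$ is the smallest positive integer $k$ with $\Delta^k$ a permanent cycle. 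Making this numerology into a proof -- rather than an explanation -- is where genuinely new input beyond the non-local results of \cite{ST6} will be needed.
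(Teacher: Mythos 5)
The statement you are addressing is stated in the paper as a \emph{conjecture}, and the paper offers no proof of it -- indeed it explicitly concedes that ``even the definition of local Euclidean field theories has not been tied down.'' Your text is therefore not a proof but a research program, and you yourself flag the decisive step as open. Concretely, the gaps are these. In step (i) the bordism bicategory of $0|1$-, $1|1$- and $2|1$-dimensional super-Euclidean manifolds with corners, together with an appropriate algebraic target, is not constructed; without a precise definition of $\EFT{2|1}^n_{loc}(X)$ the left-hand side of the conjectured isomorphism is not yet a mathematical object, so nothing downstream can be checked. In step (ii) you invoke Mayer--Vietoris and Brown representability, but exactness of Mayer--Vietoris is exactly the property that locality is supposed to deliver (the paper cites its failure for non-local theories as the reason for introducing locality); assuming it here is circular unless it is derived from the still-missing definition. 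In step (iii) the passage from a local theory to a sheaf of $E_\infty$-ring spectra on the moduli stack agreeing with $\cO^{top}$ on an \'etale cover is asserted rather than constructed; this is the entire content of the comparison and there is no mechanism offered for producing the required highly structured multiplication or descent data from field-theoretic gluing.

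Step (iv) is where you correctly identify the heart of the matter, but your treatment of it is heuristic: the claim that fully extended coherence forces the periodicity class to be a permanent cycle in the descent spectral sequence, and that this upgrades the degree-$48$ periodicity of Theorem \ref{thm:periodicity} to the $24^2$-periodicity of $\TMF$, is numerology offered in place of an argument, and the detection of torsion classes such as $\eta$ and $\nu$ by geometric data on super-circles and super-intervals is conjectural. None of this is a criticism of the strategy -- it is broadly the strategy the authors themselves envision -- but as a proof of the statement it has no complete step: each of (i)--(iv) ends at precisely the point where the conjecture's difficulty lies. The honest conclusion is that the statement remains open, and your proposal should be presented as an outline of an attack rather than as a proof.
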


$\EFT{2|1}^n_{loc}[X]$ are concordance classes of {\em local} (sometimes called {\em extended}) $2|1$\nb-dimensional Euclidean field theories over $X$. These are more elaborate objects than the field theories discussed so far. For $n=0$, they are $2$\nb-functors out of a bordism $2$\nb-category whose objects are $0|1$\nb-manifolds, whose morphisms are $1|1$\nb-dimensional bordisms, and whose $2$\nb-morphisms are $2|1$\nb-dimensional manifolds with corners (all of them furnished with Euclidean structures and maps to $X$). The need for working with {\em local} field theories in order to obtain cohomology theories is explained in our earlier paper \cite[Section 1.1]{ST1}: for non-local theories we can't expect exactness of the Mayer-Vietoris sequence. Regarding terminology from that paper:  an `(enriched) elliptic object over $X$' is now a `(local) $2$\nb-dimensional conformal  field theory over $X$'. 

While very general and beautiful results have been obtained in particular by Lurie on local versions of {\em topological} field theories \cite{Lu}, \cite{SP}, unfortunately even the {\em definition} of local Euclidean field theories has not been tied down.

\subsection{Gauged field theories and equivariant cohomology}
The definition of field theories over a manifold $X$ in \S \ref{subsec:FTover} can  be generalized if $X$ is equipped with a smooth action of a Lie group $G$. Let us equip bordisms $\Sigma$ with additional structure that consists of a triple $(P,f,\nabla)$, where $P\to \Sigma$ is a principal $G$\nb-bundle, $f\colon P\to X$ is a $G$\nb-equivariant map, and $\nabla$ is a connection on $P$, and similarly for $Y$'s. We will call the corresponding field theories {\em $G$\nb-gauged field theories over $X$}. If $G$ is the trivial group, this is just a field theory over $X$ in the previous sense. We think of a $G$-gauged field theory over $X$ as a $G$\nb-equivariant family of field theories parametrized by $X$. We write $\TFT{d|1}^n_G(X)$ for the groupoid of $G$\nb-gauged $d|1$\nb-dimensional topological field theories of degree $n$ over $X$, and $\TFT{d|1}^n_G[X]$ for the abelian group of their concordance classes. 

\begin{thm}[\cite{HSST}]
The group $\TFT{0|1}^n_G[X]$ is isomorphic to the equivariant de Rham cohomology group $H_{dR,G}^n(X)$.
\end{thm}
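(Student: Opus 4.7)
My plan is to generalize the strategy behind the HKST theorems (Theorems \ref{thm:0TFT}, \ref{thm:0EFT} and their degree-$n$ refinement) to the gauged setting, replacing the de Rham complex by the Weil/Cartan model of equivariant de Rham cohomology. By the Yoneda-style argument used in those theorems, a $G$-gauged $0|1$-TFT of degree $n$ over $X$ is determined by its value on the ``universal'' object: the supermanifold $\R^{0|1}$ equipped with the universal triple $(P,\nabla,f)$ of a principal $G$-bundle with connection together with a $G$-equivariant map $f\colon P\to X$. First I would construct and analyze the (super) moduli stack of such triples in families.

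A principal $G$-bundle on $\R^{0|1}$ is automatically trivializable, so after fixing a trivialization a connection becomes a $\mathfrak{g}$-valued odd $1$-form $A=\theta\,\xi$ with $\xi\in\mathfrak{g}$, and the equivariant map $f$ reduces to a map $\R^{0|1}\to X$, i.e., a point of $\Pi TX$. Basepoint-preserving gauge transformations act by $G$ on both factors, so the moduli stack of gauged fields is $(\mathfrak{g}\times \Pi TX)/G$. Pulling back (super)functions then gives
\[
\bigl(S(\mathfrak g^{*})\otimes \Omega^{*}(X)\bigr)^{G},
\]
the standard Cartan complex, with the $S(\mathfrak g^{*})$ factor coming from polynomial functions on the even vector space $\mathfrak g$ and $\Omega^{*}(X)$ coming from $\Pi TX$ as in \cite{HKST}.

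Next I would identify the relevant symmetry group. The odd translation in $\Iso(\R^{0|1})$ acts by the de Rham differential $d$, while its interplay with the connection data contributes the contraction $\iota_{\xi}$, producing the Cartan differential $d_{G}=d-\iota$ on the complex above. Combining invariance under $\Iso(\R^{0|1})\ltimes G$ with the grading twist that incorporates degree $n$ (exactly as in the non-equivariant degree-$n$ theorem) picks out precisely the $n$-cocycles in the Cartan complex, giving an equivalence $\TFT{0|1}^{n}_{G}(X)\cong Z^{n}(S(\mathfrak g^{*})\otimes\Omega^{*}(X))^{G}$. For concordance, a gauged field theory over $X\times\R$ restricts at the two ends and interpolates between two cocycles exactly by a primitive under $d_{G}$, so passing to $\TFT{0|1}^{n}_{G}[X]$ yields $H^{n}_{dR,G}(X)$ in the Cartan model, which is the standard presentation of equivariant de Rham cohomology.

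\textbf{Main obstacle.} The step I expect to require the most care is the super-geometric bookkeeping in identifying the differential: one must verify that the action of $\Iso(\R^{0|1})\ltimes G$ on functions on the gauged mapping stack is exactly Cartan's $d-\iota$, matching signs, parities, and the polynomial (rather than merely formal) dependence on $\xi\in\mathfrak g$. A secondary point is to argue that smooth functions on $\mathfrak g$ (as opposed to polynomial ones) can be reduced to the Cartan polynomial model after imposing $G$-invariance and the grading that defines ``degree $n$''; this is a standard but non-trivial feature of the equivariant de Rham theorem that must be incorporated here.
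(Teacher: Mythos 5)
Your overall plan — evaluate a gauged $0|1$-TFT on $\R^{0|1}$ carrying the universal triple $(P,\nabla,f)$, identify functions on the resulting moduli datum with an equivariant de Rham model, cut out degree-$n$ cocycles by invariance/equivariance under the super-symmetry group, and show concordance implements exactness — is essentially the paper's route. The difference is the target model: the paper identifies the groupoid $\TFT{0|1}^n_G(X)$ with the $n$-cocycles of the \emph{Weil} model (citing \cite{GS}), whereas you aim directly at the Cartan model $(S(\fg^*)\otimes\Omega^*(X))^G$ with $d_G=d-\iota$.

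The genuine gap is in your moduli computation. After trivializing $P\to\R^{0|1}$, a connection is not of the form $\theta\,\xi$ with $\xi\in\fg$: writing $A=(a+\theta b)\,d\theta$ one sees that the space of connections on the trivial bundle is $\fg\times\Pi\fg$ (an even and an odd $\fg$-valued component), and the gauge group is the super Lie group $\Map(\R^{0|1},G)\cong G\ltimes \Pi\fg$, not just $G$. Functions on the honest moduli datum therefore produce (a completed form of) the Weil algebra $\Lambda(\fg^*)\otimes S(\fg^*)$ tensored with $\Omega^*(X)$, with gauge invariance giving precisely the basic condition — this is why the Weil model appears in the paper's identification of the groupoid. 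Your asserted stack $(\fg\times\Pi TX)/G$ silently gauge-fixes the odd connection component to zero using the odd gauge transformations; that passage (Kalkman/Mathai–Quillen, Weil $\to$ Cartan) is exactly where the contraction $\iota_\xi$ in $d_G$ is generated, and it is the step you defer as "interplay with the connection data." As written the contraction is asserted rather than derived, and without accounting for the odd part of the gauge group you cannot conclude that quotienting by $G$ alone yields the same groupoid of field theories; repairing this either lands you in the Weil model (the paper's route) or requires carrying out the gauge-fixing and residual-symmetry analysis explicitly. Your secondary concern (smooth versus polynomial dependence on the even variable, to be resolved by the dilation-equivariance encoding degree $n$) is real but is the easier of the two issues.
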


Like in the non-equivariant case, the proof of this result is based on identifying the groupoid $\TFT{0|1}^n_G(X)$, which we show is equivalent to the discrete groupoid whose objects consist of the $n$\nb-cocycles of the Weil model of equivariant de Rham cohomology \cite[(0.5),(0.7)]{GS}. 
We note that in the absence of a connection, the resulting TFT's give $G$-invariant closed differential forms and hence concordance classes lead to  less interesting groups. For example, if $G$ is compact and connected then one simply gets back ordinary de Rham cohomology $H_{dR}^n(X)$.

We believe that a different way of getting the Weil model would be to replace the target $X$ by the stack $\widehat X_G$ of $G$-bundles with connection and $G$-map to $X$ and then define $\TFT{0|1}^n(\widehat X_G)$ as for manifold targets (that definition extends from manifolds, aka.\ representable stacks, to all stacks). There is a geometric map  
\[
\TFT{0|1}^n_G(X) \ra \TFT{0|1}^n(\widehat X_G)
\]
which should be an isomorphism.

For $G=S^1$ we have also constructed an equivariant version of Euclidean field theories of dimension $0|1$ over $X$. 

\begin{thm}[\cite{HaST}] For an $S^1$\nb-manifold $X$,
the group $\EFT{0|1}^n_{S^1}[X]$ is isomorphic to the localized equivariant de Rham cohomology group $H_{dR,G}^{ev}(X)[u^{-1}]$ for even $n$, respectively  $H_{dR,G}^{odd}(X)[u^{-1}]$ for odd $n$. Here $u\in H^2_{dR}(BS^1)$ is a generator.
\end{thm}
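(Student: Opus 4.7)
The plan is to adapt the proof template of Theorem \ref{thm:0EFT} and the gauged topological analogue for $\TFT{0|1}^n_G$. I would first identify the groupoid $\EFT{0|1}^n_{S^1}(X)$ with a concrete cocycle model by evaluating any $E\in\EFT{0|1}^n_{S^1}(X)$ on the universal family of $0|1$-supermanifolds equipped with an $S^1$-bundle with connection and equivariant map to $X$. As in the non-equivariant case $C^\infty(\map(\R^{0|1},X)) = \Omega^*(X)$, and the connection on an $S^1$-bundle over $\R^{0|1}$-families should contribute one even generator $u$ (the curvature slot) and one odd generator $\theta$ (the connection $1$-form slot), giving the Weil algebra of $\Lie(S^1)$ coupled to $\Omega^*(X)$.

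Two invariance conditions then cut this algebra down. The $S^1$-gauge invariance, together with the Koszul relation $d\theta = u$, passes to the basic subcomplex, producing the Cartan complex $(\Omega^*(X)\otimes\R[u])^{S^1}$ with Cartan differential $d_u = d - u\,\iota_V$, as in the topological gauged theorem. The further invariance under the super-isometry group of Euclidean $\R^{0|1}$ has two effects. First, as in Theorem \ref{thm:0EFT}, only the $\Z/2$-parity by total form degree survives, selecting the even or odd part according to the parity of $n$. Second, and this is the new mechanism, a Euclidean scaling that dilates the $S^1$-connection should promote $u$ to an invertible degree-$2$ class, yielding the $u$-inverted Cartan complex $(\Omega^*(X)\otimes\R[u,u^{-1}])^{S^1}$. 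Concretely, I would exhibit an invertible field theory $U\in\EFT{0|1}^2_{S^1}(\pt)$ whose associated invariant function is $u$, so that multiplication by $U$ gives an equivalence
\[
\EFT{0|1}^n_{S^1}(X)\overset{\cong}\ra \EFT{0|1}^{n+2}_{S^1}(X),
\]
in the spirit of the periodicity theory of Theorem \ref{thm:periodicity}. Passage to concordance classes then converts the $u$-inverted Cartan complex to its cohomology, exactly as in the proof of Corollary \ref{cor:dR_coh}, producing the claimed isomorphism $\EFT{0|1}^n_{S^1}[X]\cong H^{ev/odd}_{dR,S^1}(X)[u^{-1}]$.

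The main obstacle will be constructing and verifying the invertible element $U$ and the ensuing localization: one must pin down precisely how the Euclidean symmetries act on the moduli stack of $S^1$-bundles with connection over $\R^{0|1}$-families to force $u^{-1}$ into the invariant function algebra, and check that no extra cocycle relations are imposed or destroyed by this localization step. A secondary technical point, already present in the topological gauged theorem but requiring care in the Euclidean and super setting, is the correct treatment of super families of principal $S^1$-bundles with connection and the action of the relevant super-Lie groups of gauge transformations and Euclidean isometries on them. Once these points are settled, the identification with $H^{ev/odd}_{dR,S^1}(X)[u^{-1}]$ follows by the standard concordance-to-cohomology argument.
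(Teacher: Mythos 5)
The paper does not actually prove this statement: it is quoted from \cite{HaST}, which is listed as in preparation, so the only material to measure your proposal against is the surrounding text, namely the definition of an $S^1$\nb-gauged \emph{Euclidean} theory and the strategy indicated for the topological case \cite{HSST}. Your skeleton — evaluate on the universal family of $0|1$\nb-supermanifolds with $S^1$\nb-bundle, connection and equivariant map to get a Weil/Cartan-type cocycle groupoid, impose invariance under the super Euclidean isometries to collapse the grading to $\Z/2$, then pass to concordance classes — is exactly the template the survey indicates, and that part is fine.

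The gap is in your mechanism for the localization. First, $\Iso(\E^{0|1})=\R^{0|1}\rtimes\Z/2$ consists of the odd translations and the flip only; it contains \emph{no} dilations — discarding the dilations of $\Diff(\R^{0|1})=\R^{0|1}\rtimes\R^\times$ is precisely what distinguishes $\EFT{0|1}$ from $\TFT{0|1}$ and is why all even-degree closed forms survive. So there is no ``Euclidean scaling that dilates the $S^1$\nb-connection'', and nothing in the isometry action can promote $u$ to an invertible class. Second, the invertible theory $U\in\EFT{0|1}^2_{S^1}(\pt)$ with cocycle $u$ cannot be the \emph{source} of the localization: tensoring with an invertible degree-$2$ theory only identifies degrees $n$ and $n+2$ (a periodicity that already exists non-equivariantly and only yields the even/odd dichotomy), and for $U$ to be tensor-invertible with cocycle $u$ the element $u^{-1}$ must already live in the cocycle model — over $X=\pt$ multiplication by $u$ on $\R[u]$ is injective but not surjective — so this step is circular. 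The missing ingredient is the one the paper builds into the very definition of an $S^1$\nb-gauged Euclidean theory: the curvature of the principal bundle $P\to\Sigma$ is \emph{required} to equal the canonical $2$\nb-form determined by the Euclidean structure on the $0|1$\nb-manifold $\Sigma$ (this is what makes $P$ a Euclidean $1|1$\nb-manifold and enables the reduction functor $\EFT{1|1}(X)\to\EFT{0|1}_{S^1}(LX)$). Since that canonical $2$\nb-form is nowhere vanishing, the curvature variable in the function algebra of the moduli stack is constrained to be invertible; it is this constraint — not a scaling symmetry or a periodicity element — that puts $u^{-1}$ into the cocycle model and, after passing to concordance classes as in the topological case, yields $H^{ev/odd}_{dR,S^1}(X)[u^{-1}]$.
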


For an $S^1$-gauged {\em Euclidean} field theory, we require that  the curvature of the principal $S^1$\nb-bundle $P\to \Sigma$  is equal to a $2$\nb-form canonically associated to the Euclidean structure on $\Sigma$. The point of this condition is that the Euclidean structure on $\Sigma$ and the connection on $P$ give a canonical Euclidean structure on the $1|1$\nb-manifold $P$.  
The result is a functor
\[
\EFT{1|1}(X)\ra \EFT{0|1}_{S^1}(LX)
\]
which is a generalization of the  `dimensional reduction functor' \eqref{eq:dim_red}.
 Passing to concordance classes we obtain a homomorphism
 \[
 K^0(X)\ra \EFT{1|1}[X]\ra \EFT{0|1}_{S^1}[LX]\cong H^{ev}_{dR,S^1}(LX])[u^{-1}]
 \]
 \begin{thm}[\cite{HaST}] If $V$ is a complex vector bundle over $X$, the image of $[V]\in K^0(X)$ under the above map is the Bismut-Chern character.
 \end{thm}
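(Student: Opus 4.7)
The plan is to unpack each arrow in the composite and then recognize the resulting class on $LX$ as Bismut's formula. Given a complex vector bundle $V\to X$ with connection $\nabla$, the Dumitrescu field theory $E_{V,\nabla}\in \EFT{1|1}(X)$ assigns to a super-point over $x\in X$ the fiber $V_x$, and to a $1|1$-dimensional Euclidean bordism $I\to X$ the super-parallel transport built from $\nabla$ regarded as a Quillen superconnection (with zero odd part). First I would rewrite this field theory in a form suitable for the $S^1$-gauged reduction: under the correspondence between $1|1$-Euclidean manifolds and $0|1$-Euclidean manifolds equipped with a compatible $S^1$-connection, a free super-loop in $X$ corresponds to a principal $S^1$-bundle with connection over a point of $LX$ together with an equivariant map to $X$. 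The functor $\EFT{1|1}(X)\to \EFT{0|1}_{S^1}(LX)$ of the previous theorem is defined, on objects, by sending $E\in\EFT{1|1}(X)$ to the gauged $0|1$-theory whose value on $\gamma\in LX$ is obtained by evaluating $E$ on the super-circle corresponding to $\gamma$ with its tautological $S^1$-action.

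Second, I would compute the value of this reduced theory on the universal super-loop, i.e.\ on the evaluation family $\ev\colon \map(\R^{0|1},LX)\times \R^{0|1}\to X$, following the recipe of Remark~\ref{rem:partition}. Because $E_{V,\nabla}$ is built from parallel transport, the value on a loop $\gamma$ is the super-trace of the super-holonomy of $\nabla$ around $\gamma$, twisted by the Euclidean structure on the super-circle; in formulas, one gets
\[
Z_{\red E_{V,\nabla}}(\gamma)=\str\Bigl(\Hol_\gamma(\nabla)\cdot P\exp\!\Bigl(-\int_0^1\ev^*F_\nabla\Bigr)\Bigr),
\]
where $F_\nabla$ is the curvature and the path-ordering is along $\gamma$. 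This is exactly Bismut's iterated-integral formula for the Chern character on loop space. Under the identification $\EFT{0|1}_{S^1}[LX]\cong H^{ev}_{dR,S^1}(LX)[u^{-1}]$ given by the preceding theorem, the $S^1$-equivariance built into the gauged reduction turns this function on $LX$ into the Bismut--Chern class.

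Third, I would check that the non-equivariant specialization is consistent with Fei Han's theorem: forgetting the $S^1$-gauging and restricting to constant loops must recover the ordinary Chern character form of $(V,\nabla)$ on $X$, as displayed in the commutative diagram following \eqref{eq:dim_red}. This compatibility fixes all normalization constants and powers of $u$, so that the equivariant class is genuinely the Bismut--Chern character and not merely a scalar multiple of it.

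The main obstacle, and the part where genuine work is needed, is the second step: one has to pin down precisely how the Euclidean structure on the super-circle corresponding to $\gamma\in LX$ interacts with the super-connection, so that the super-trace of the holonomy produces the path-ordered exponential of $F_\nabla$ along $\gamma$ and not some other endomorphism. Equivalently, this is a super-geometric version of Chen's iterated integrals, and the core of the argument is identifying the expansion of the super-holonomy of $\nabla$ around a super-loop with Bismut's iterated-integral series. Once that identification is in place, the theorem follows from Theorem~\ref{thm:0EFT} applied to the gauged setting and the explicit form of the $S^1$-equivariant de Rham model used in \cite{HaST}.
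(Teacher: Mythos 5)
Be aware that the survey gives no proof of this statement at all: it is quoted from \cite{HaST} (listed as in preparation), so the only thing to compare against is the surrounding outline (the construction of the functor $\EFT{1|1}(X)\to \EFT{0|1}_{S^1}(LX)$, the identification $\EFT{0|1}^n_{S^1}[LX]\cong H^{ev/odd}_{dR,S^1}(LX)[u^{-1}]$, and the compatibility with Han's Chern character result). Measured against that outline your overall strategy is the intended one, but as a proof it has a genuine gap exactly where you locate ``the main obstacle'': the identification of the value of the reduced gauged theory on the family $\map(\R^{0|1},LX)\times\R^{0|1}$ with Bismut's equivariantly closed iterated-integral series is not an incidental computation to be filled in later, it \emph{is} the theorem, and nothing in your steps one and three supplies it. Moreover your displayed formula is only a heuristic stand-in for it: as written, $\str\bigl(\Hol_\gamma(\nabla)\cdot P\exp\bigl(-\int_0^1 \ev^*F_\nabla\bigr)\bigr)$ is a scalar-valued function of the loop $\gamma$, whereas the Bismut--Chern character is an inhomogeneous equivariantly closed form on $LX$ satisfying $(d+u\,\iota_K)\,\omega=0$; to get the higher-degree components you must keep track of how the odd parameters of the superloop family generate forms on $LX$ (via $C^\infty(\map(\R^{0|1},LX))\cong\Omega^*(LX)$), and to get the correct equivariant structure you must use the constraint, built into the gauged Euclidean bordism category, that the curvature of the $S^1$-bundle equals the canonical $2$-form attached to the Euclidean structure. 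None of this is addressed, so the central identification remains unproved rather than merely technical.

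A second, more specific flaw is the claim in your third step that compatibility with Han's theorem ``fixes all normalization constants and powers of $u$, so that the equivariant class is genuinely the Bismut--Chern character and not merely a scalar multiple of it.'' The restriction map $H^{ev}_{dR,S^1}(LX)[u^{-1}]\to H^{ev}_{dR}(X)$ induced by the inclusion of constant loops is very far from injective, so knowing that your class restricts to $\mathrm{ch}(V,\nabla)$ is only a consistency check; it cannot distinguish the Bismut--Chern character from any other equivariant extension of the Chern character, nor pin down $u$-dependence. The identification has to be made at the level of the actual superholonomy expansion of the Dumitrescu theory (a super-geometric Chen iterated-integral computation carried out cocycle-by-cocycle in the Cartan/Weil model used in \cite{HaST}), after which step three is a corollary rather than an input.
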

 
 This statement generalizes the field theoretic interpretation of the Chern character in terms of the reduction functor $\red$  \eqref{eq:dim_red} since the homomorphism 
 \[
 H^{ev}_{dR,S^1}(LX)[u^{-1}]\ra H^{ev}_{dR}(X)
 \]
 induced by the inclusion map $X=LX^{S^1}\into LX$ maps the Bismut-Chern character of $V$ to the Chern character of $V$.

\subsection{Comparison with our 2004 survey}
For the  readers who are familiar with our 2004 survey \cite{ST1}, we briefly summarize some advances that are achieved by the current paper. The main new ingredient is a precise definition of {\em supersymmetric} field theories. This requires to define the right notion of geometric supermanifolds which make up the relevant bordism categories. We decided to use  {\em rigid geometries} in the spirit of Felix Klein since these have a simple extension to odd direction. In supergeometry it is essential to work with families of objects that are parametrized by supermanifolds (the additional odd parameters). This forced us to work with the family versions (also known as fibered versions) of the bordism and vector space categories. In Segal's (and our) original notion of field theory, there was an unspecified requirement of `continuity' of the symmetric monoidal functor. It turns out that our family versions implement this requirement in the following spirit: A map is smooth if and only if it takes smooth functions to smooth functions. 

In the 2-dimensional case, the rigid geometry we use corresponds to flat Riemannian structures on surfaces. Following Thurston and others, we call this a {\em Euclidean} structure. The flatness has the effect that only closed surfaces of genus one arise in the bordism category and hence our Euclidean field theories contain much less information then, say, a conformal field theory. Again we think of this as an advantage for several reasons. One is simply the fact that it becomes much easier to construct examples of field theories by a generator and relation method as discussed in Section~\ref{subsec:generators}. Another reason is the conjectured relation to topological modular forms, where also only genus one information is used. The last reason is our desire to express the Witten genus \cite{Wi} of a closed Riemannian string manifold as the partition function of a field theory, the nonlinear $2|1$-dimensional $\Sigma$-model. It is well established in the physics community that such a field theory should exist and have modular partition function (a fact proven mathematically for the Witten genus by Don Zagier \cite{Za}).
However, it is also well known that this field theory can only be conformal if the Ricci curvature of the manifold vanishes. The question arises why a non-conformal field theory should have a modular (and in particular, holomorphic) partition function? One of the results in this paper is exactly this fact, proven precisely for our notion of $2|1$-dimensional Euclidean field theory. The holomorphicity is a consequence of the more intricate structure of the moduli stack of supertori. 

In the conformal world, surfaces can be glued together along their boundaries by diffeomorphism, as we did in \cite{ST1}. However, for most other geometries this is not possible any more and hence a precise notion of geometric bordism categories requires the introduction of collars. A precise way of doing this is one of the important, yet technical, contributions of this paper. We also decided to work with categories internal to a strict 2-category $\sA$ as our model for `weak 2-categories'. They are very flexible, allowing the introduction of fibered categories (needed for our family versions),  symmetric monoidal structures (modeling Pauli's exclusion principle) and flips (related to the spin-statistics theorem) by just changing the ambient 2-category $\sA$. We also need the isometries (2-morphisms of an internal category) of bordisms to define the right notion of twisted field theories in Section~\ref{sec:twisted}. In order to fully model {\em local} 2-dimensional twisted field theories, we'll have to choose certain target `weak 3-categories' in future papers. One possible model is introduced in the contribution \cite{DH} in the current volume.

The `adjunction transformations' in \cite[Def. 2.1]{ST1} are now completely replaced by allowing certain `thin bordisms', for example $L_0$ in Section~\ref{subsec:generators}. Because of the existence of these geometric 1-morphisms, any functor must preserve the adjunctions automatically. In addition, we don't consider the (anti)-involutions on the categories any more, partially because we want to allow non-oriented field theories and partially because our $2|1$-dimensional bordism category does not have a real structure any more.

Finally, we enlarged the target category of a field theory to allow general topological vector spaces (locally convex, complete Hausdorff) because the smoothness requirement for a field theory sometimes does not hold for Hilbert spaces, see Remark~\ref{rem:completion}. This has the additional advantage of being able to use the {\em projective} tensor product (leading to non-Hilbert spaces) for which inner products and evaluations are continuous operations (unlike for the Hilbert tensor product).

\subsection{Summary of Contents} \label{subsec:summary}

The next section leads up to the definition of field theories associated to rigid geometries in Definition \ref{def:GMFT}. This includes Euclidean field theories of dimension $d$, which are obtained by specializing the geometry to be the Euclidean geometry of dimension $d$. Along the way we present the necessary categorical background (on internal categories in \S  \ref{subsec:internal_cat}, categories with flip in \S \ref{subsec:cat_flip} and fibered categories in \S \ref{subsec:fibered}) as well as geometric background (the construction of the Riemannian bordism category in \S  \ref{subsec:Riem_bord} and the definition of families of rigid geometries in \S  \ref{subsec:rigid}). Section \ref{sec:2EFT} discusses $2$\nb-dimensional EFT's and their partition functions. The arguments presented in this section provide the first half of the outline of the proof of our modularity theorem \ref{thm:modular} for the partition functions of Euclidean field theories of dimension $2|1$. The outline of the proof is continued in \S  \ref{subsec:modular2} after some preliminaries on supermanifolds in \S  \ref{subsec:SMan}, on super Euclidean geometry in \S  \ref{subsec:superEuclidean}, and the definition of supersymmetric field theories associated to a supergeometry in \S  \ref{subsec:susyFT2}; specializing to the super Euclidean geometry, these are supersymmetric Euclidean field theories. In section \ref{sec:twisted} we define {\em twisted field theories}. This notion is quite general and includes Segal's weakly conformal field theories (see \S \ref{subsec:weakly_conformal}) as well as Euclidean field theories of degree $n$ (see \S \ref{subsec:degree}). The last section contains an outline of the proof of the periodicity theorem \ref{thm:periodicity}.

\section{Geometric field theories}
\subsection{Segal's definition of a conformal field theory}
In this section we start with Graeme Segal's definition of a $2$\nb-dimensional conformal field theory and elaborate suitably to obtain the definition of a $d$\nb-dimensional Euclidean field theory, and more generally, a field theory associated to every `rigid geometry' (see Definitions \ref{def:GMFT} and \ref{def:susyGMFT}). Segal has proposed an axiomatic description of $2$\nb-dimensional conformal field theories in a preprint that widely circulated for a decade and a half  (despite the ``do not copy" advice on the front) before it was published as \cite{Se2}. In the published version, Segal added a foreword/postscript commenting on developments since the original manuscript was written in which he proposes the following definition of conformal field theories.

\begin{defn}{\bf (Segal \cite[Postscript to section 4]{Se2})}\label{def:Segal_def}
A {\em 2-dimensional conformal field theory} $(H,U)$ consists of the following two pieces of data:
\begin{enumerate}
\item A functor $Y\mapsto H(Y)$ from the category of closed oriented smooth $1$\nb-mani\-folds to locally convex complete topological vector spaces, which takes disjoint unions to (projective) tensor products, and
\item For each oriented cobordism $\Sigma$, with conformal structure, from $Y_0$ to $Y_1$ a linear trace-class map $U(\Sigma)\colon H({Y_0})\to H(Y_1)$, subject to 
\begin{enumerate}
\item $U(\Sigma\circ \Sigma')=U(\Sigma)\circ U(\Sigma')$ when cobordisms are composed, and
\item $U(\Sigma\amalg\Sigma')=U(\Sigma)\otimes U(\Sigma')$.
\item If $f\colon \Sigma\to \Sigma'$ is a conformal equivalence  between conformal bordisms, the diagram
\begin{equation}\label{eq:conf_inv}
\xymatrix{
H(Y_0)\ar[r]^{U(\Sigma)}\ar[d]_{H(f_{|Y_0})}&H(Y_1)\ar[d]^{H(f_{|Y_0})}\\
H(Y_0')\ar[r]^{U(\Sigma')}&H(Y_1')
}
\end{equation}
is commutative.
\end{enumerate}
\end{enumerate}
Furthermore, $U(\Sigma)$ must depend smoothly on the conformal structure of $\Sigma$. 

Condition (c) is not explicitly mentioned in Segal's postscript to section 4, but it corresponds to identifying conformal surfaces with parametrized boundary in his  bordisms category if they are conformally equivalent relative boundary, which Segal does in the first paragraph of \S 4.
\end{defn}

\subsection{Internal categories}\label{subsec:internal_cat}

We note that the data $(H,U)$ in Segal's definition of a conformal field theory (Definition \ref{def:Segal_def}) can be interpreted as a pair of symmetric monoidal functors. Here $H$ is a functor from the groupoid of closed oriented smooth $1$\nb-manifolds to the groupoid of locally convex topological vector spaces. The domain of the functor $U$ is the groupoid whose objects are conformal bordisms and whose morphisms are conformal equivalences between conformal bordisms. The range of $U$ is the groupoid whose objects are trace-class operators ($=$ nuclear operators) between complete locally convex topological vector spaces and whose morphisms are commutative squares like diagram \eqref{eq:conf_inv}. The monoidal structure on the domain groupoids of $H$ and $U$ is given by the disjoint union, on the range groupoids it is given by the tensor product.

Better yet, the two domain groupoids involved fit together to form an {\em internal category} in the category of symmetric monoidal groupoids. The same holds for the two range groupoids, and the pair $(H,U)$ is a functor between these internal categories. It turns out that internal categories provide a convenient language not only for field theories a la Segal; rather, {\em all refinements that we'll incorporate in the following sections fit into this framework}. What changes is the  {\em ambient} strict $2$\nb-category, which now is the $2$\nb-category $\SymCat$ of symmetric monoidal categories, or equivalently, the $2$\nb-category $\Sym(\Cat)$ of symmetric monoidal objects in $\Cat$, the $2$\nb-category of categories. Later we will replace $\Cat$ by $\Cat/\Man$ (respectively $\Cat/\csM$) whose objects are categories fibered over the category $\Man$ of smooth manifolds (respectively $csM$ of supermanifolds).

Internal categories are described \eg in section XII.1 of the second edition of 
Mac Lane's book \cite{McL}, but his version of internal categories is too strict to define the internal bordism category we need as domain. A weakened version of internal categories and functors is defined for example by Martins-Ferreira in \cite{M} who calls them {\em pseudo categories}. This is a good reference which describes completely explicitly internal categories (also known as pseudo categories, \S 1), functors between them (also known as pseudo functors, \S2), natural transformations (also known as pseudo-natural transformations, \S 3), and modifications between natural transformations (also known as pseudo-modifications, \S 4). 

\begin{rem} There is a slight difference between the definition of natural transformation in \cite{M} and ours (see Definition \ref{def:lax_nattrans}) in that we won't insist on invertibility of a $2$\nb-cell which is part of the data of a natural transformation. It should be emphasized that our field theories of degree $n$ will be defined as natural transformations of functors between internal categories (see Definitions \ref{def:twistedEFT} and \ref{def:prelim_deg}), and here it is crucial that we don't insist on invertibility (see Remark \ref{rem:noninvertible_2cell}). The main result of \cite[Theorem 3]{M} implies  that for fixed internal categories $\sC$, $\sD$ the functors from $\sC$ to $\sD$ are the objects of a bicategory whose morphisms are natural transformations (its $2$\nb-morphisms are  called `modifications'). That result continues to hold since the construction of various compositions does not involve taking inverses. 
\end{rem}

For much of the material on internal categories covered in this subsection, we could simply refer to \cite{M} for definitions. For the convenience of the reader and since internal categories are the categorical backbone of our description of field theories, we will describe them  in some detail. We start with the definition of an internal category in an ambient category $\sA$. Then we explain why this is too strict to define our internal bordism category and go on to show how this notion can be suitably weakened if the ambient category $\sA$ is a strict $2$\nb-category. 

\begin{defn}{\bf (Internal Category)}\label{def:intcat} Let $\sA$ be a category with pull-backs (here $\sA$ stands for `ambient'). An {\em internal category} or {\em category object} in $\sA$ consists of two objects $\sC_0, \sC_1\in \sA$ and four morphisms
\[
s,t\colon \sC_1\ra \sC_0
\qquad
u\colon \sC_0\ra \sC_1
\qquad
c\colon \sC_1\times_{\sC_0}\sC_1\ra \sC_1
\]
(source, target, unit morphism and composition), subject to the following four conditions expressing the usual axioms for a category:
\begin{equation}\label{eq:dom_identity}
s\, u=1=t\, u\colon \sC_0\ra \sC_0,
\end{equation}
(this specifies source  and target of the identity map); the commutativity of the diagram
\begin{equation}\label{eq:dom_composition}
\xymatrix{
\sC_1\ar[d]_{t}
&\sC_1\times_{\sC_0}\sC_1\ar[d]^c\ar[l]_<>(.5){\pi_1}\ar[r]^<>(.5){\pi_2}
&\sC_1\ar[d]^{s}\\
\sC_0&\sC_1\ar[l]_{t}\ar[r]^{s}&\sC_0
}
\end{equation}
(this specifies source  and target of a composition);
the commutativity of the diagram
\begin{equation}\label{eq:identity}
\xymatrix{
\sC_1\ar[rr]^<>(.5){u\,t\times 1}\ar[drr]_{1}
&&\sC_1\times_{\sC_0}\sC_1\ar[d]^c
&&\sC_1\times_{\sC_0}\sC_0\ar[ll]_<>(.5){1\times u\, s}\ar[dll]^{1}\\
&&\sC_1&&
}
\end{equation}
expressing the fact the $u$ acts as the identity for composition, and the commutativity of the diagram
\begin{equation}\label{eq:associativity}
\xymatrix{
\sC_1\times_{\sC_0}\sC_1\times_{\sC_0}\sC_1\ar[r]^<>(.5){c\times 1}\ar[d]_{1\times c}
&\sC_1\times_{\sC_0}\sC_1\ar[d]^c\\
\sC_1\times_{\sC_0}\sC_1\ar[r]^c&\sC_1
}
\end{equation}
expressing associativity of composition. 

\medskip

\noindent{\bf (Functors between internal categories)} Following MacLane (\S XII.1), a functor $f\colon \sC\to \sD$ between internal categories $\sC,\sD$ in the same ambient category $\sA$ is a pair of morphisms in $\sA$
\[
f_0\colon \sC_0\ra \sD_0
\qquad
f_1\colon \sC_1\ra \sD_1.
\]
Thought of as describing the functor on ``objects" respectively ``morphisms", they are required to make the obvious diagrams commutative:
\begin{equation}\label{eq:D_0diagrams}
\xymatrix{
\sC_1\ar[r]^{f_1}\ar[d]_{s}
&\sD_1\ar[d]^{s}\\
\sC_0\ar[r]^{f_0}&\sD_0
}
\qquad
\xymatrix{
\sC_1\ar[r]^{f_1}\ar[d]_{t}
&\sD_1\ar[d]^{t}\\
\sC_0\ar[r]^{f_0}&\sD_0
}
\end{equation}
\begin{equation}\label{eq:D_1diagrams}
\xymatrix{
\sC_1\times_{\sC_0}\sC_1\ar[d]_{c_\sC}\ar[r]^<>(.5){f_1\times f_1}
&\sC_1\ar[d]^{c_\sD}\\
\sD_1\times_{\sD_0}\sD_1\ar[r]^<>(.5){f_1}&\sD_1
}
\qquad
\xymatrix{
\sC_0\ar[r]^{f_0}\ar[d]_{u}
&\sD_0\ar[d]^{u}\\
\sC_1\ar[r]^{f_1}&\sD_1
}
\end{equation}

\medskip

\noindent{\bf (Natural transformations)} If $f$, $g$ are two internal functors $\sC\to \sD$, a {\em natural transformation} $n$ from $f$ to $g$ is a morphism 
\[
n\colon \sC_0\ra \sD_1
\]
making the following diagrams commutative:
\begin{equation}\label{eq:nattrans}
\xymatrix{
&\sC_0\ar[dl]_{g_0}\ar[d]_n\ar[dr]^{f_0}\\
\sD_0&\sD_1\ar[l]_{t}\ar[r]^{s}&\sD_0
}
\qquad
\xymatrix{
\sC_1\ar[d]_{nt\times f_1}\ar[r]^<>(.5){g_1\times ns}
&\sD_1\times_{\sD_0}\sD_1\ar[d]^{c_\sD}\\
\sD_1\times_{\sD_0}\sD_1\ar[r]^{c_\sD}&\sD_1
}
\end{equation}
We note that the commutativity of the first diagram is needed in order to obtain the arrows $gt\times f_1$, $g_1\times ns$ in the second diagram. If the ambient category $\sA$ is the category of sets, then $n$ is a natural transformation from the functor $f$ to the functor $g$; the first diagram expresses the fact that for an object $a\in \sC_0$ the associated morphism $n_a\in \sD_1$ has domain $f_0(a)$ and range $g_0(a)$. The second diagram expresses the fact that for every morphism $h\colon a\to b$ the diagram 
\[
\xymatrix{
f_0(a)\ar[r]^{n_a}\ar[d]_{f_1(h)}&g_0(a)\ar[d]^{g_1(h)}\\
f_0(b)\ar[r]^{n_b}&g_0(b)
}
\]
is commutative.
\end{defn}

As mentioned before, we would like to regard Segal's  pair $(H,U)$ as a functor between internal categories where the ambient category $\sA$ is the category of symmetric monoidal groupoids. However, this is not quite correct due to the lack of associativity of the internal bordism category. In geometric terms, the problem is that if
$\Sigma_{i}$ is a bordism from $Y_i$ to $Y_{i+1}$ for $i=1,2,3$, then $(\Sigma_3\cup_{Y_3}\Sigma_2)\cup_{Y_2}\Sigma_1$ and $\Sigma_3\cup_{Y_3}(\Sigma_2\cup_{Y_2}\Sigma_1)$ are not strictly speaking {\em equal}, but only canonically conformally equivalent.
In categorical terms, this means that the diagram \eqref{eq:associativity} is not commutative; rather, the conformal equivalence between these bordisms is a morphism in the groupoid $\sC_1$ whose objects are conformal bordisms. This depends functorially on $(\Sigma_3,\Sigma_2,\Sigma_1)\in \sC_1\times_{\sC_0}\sC_1\times_{\sC_0}\sC_1$ and hence it provides an invertible natural transformation $\alpha$ between the two functors of diagram \eqref{eq:associativity}
\begin{equation}\label{eq:lax_associativity}
\xymatrix{
\sC_1\times_{\sC_0}\sC_1\times_{\sC_0}\sC_1\ar[r]^<>(.5){c\times 1}\ar[d]_{1\times c}
&\sC_1\times_{\sC_0}\sC_1\ar[d]^c\\
\sC_1\times_{\sC_0}\sC_1\ar[r]_c\ar@{=>}[ur]^\alpha_\cong&\sC_1
}
\end{equation}
The moral is that we should relax the associativity axiom of an internal category by replacing the assumption that the diagram above is commutative by the weaker assumption that the there is an invertible  $2$\nb-morphism $\alpha$ between the two compositions. This of course requires that the ambient category $\sA$ can be refined to be a strict $2$\nb-category (which happens in our case, with objects, morphisms, respectively $2$\nb-morphisms being symmetric monoidal groupoids, symmetric monoidal functors, respectively symmetric monoidal natural transformations).

This motivates the following definition:
\begin{defn}\label{def:lax_intcat}
An {\em internal category in a strict $2$\nb-category $\sA$} consists of the following data:
\begin{itemize}
\item objects $\sC_0$, $\sC_1$ of $\sA$;
\item morphisms $s,t,c$ of $\sA$ as in definition \ref{def:intcat};
\item invertible $2$\nb-morphisms $\alpha$ from \eqref{eq:lax_associativity}, and invertible $2$\nb-morphisms $\lambda$, $\rho$ in the following diagram
\end{itemize}
\begin{equation}\label{eq:lax_identity}
\xymatrix{
\sC_1\ar[rr]^<>(.5){ut\times 1}\ar[drr]_{1}^{}="2"
&&\sC_1\times_{\sC_0}\sC_1\ar[d]^c
&&\sC_1\ar[ll]_<>(.5){1\times us}\ar[dll]^{1}_{}="1"\\
&&\sC_1&&
\ar@{=>}^-\lambda "1,3";"2"
\ar@{=>}_-\rho "1,3";"1"
}
\end{equation}
The morphisms are required to satisfy conditions \eqref{eq:identity} and \eqref{eq:dom_composition}.  The $2$\nb-morphisms are subject to two coherence diagrams (see \cite[Diagrams (1.6) and (1.7)]{M}); in particular, there is a pentagon shaped diagram involving the `associator' $\alpha$.
In addition, it is required that composing the $2$\nb-morphisms $\alpha$, $\lambda$, or $\rho$ with the identity $2$\nb-morphism on $s$ or $t$ gives an identity $2$\nb-morphism.

For comparison with \cite{M} it might be useful to note that in that paper the letter $d$ (domain) is used instead of our $s$ (source), $c$ (codomain) instead of $t$ (target), $m$ (multiplication) instead of $c$ (composition) and $e$ instead of $u$ (unit).
\end{defn}

\begin{rem} In the special case of an internal category where $\sC_0$ is a terminal object of the strict $2$\nb-category $\sA$, this structure is called a {\em monoid in $\sA$}. The composition 
\[
c\colon \sC_1\times\sC_1\ra \sC_1,
\]
is thought of as a multiplication on $\sC_1$ with unit $u\colon \sC_0\to \sC_1$.
For example, a monoid in $\sA=\Cat$ is a {\em monoidal category}. 

Similarly, a {\em symmetric monoidal category} can be viewed as a {\em symmetric monoid} in $\Cat$ in the sense of the definition below. The point of this is that we will need to talk about symmetric monoids in other strict $2$\nb-categories, \eg categories fibered over some fixed category $\sS$. 
\end{rem}

\begin{defn}\label{def:symm_monoid}
A {\em symmetric monoid} in a strict $2$\nb-category $\sA$ is a monoid $(\sC_1,c,u,\alpha,\lambda,\rho)$ in $\sA$ together with an invertible $2$\nb-morphism $\sigma$ called {\em braiding isomorphism}:
\begin{equation}
\xymatrix{
\sC_1\times \sC_1\ar[rr]^m\ar[d]_\tau&&\sC_1\\
\sC_1\times\sC_1\ar[rru]_m^{}="1"&&
\ar@{=>}_<>(.5)\sigma^<>(.5)\cong"1,1";"1"
}
\end{equation}
Here $\tau$ is the morphism in $\sA$ that switches the two copies of $\sC_1$. The $2$\nb-morphisms $\sigma$, $\alpha$, $\lambda$ and $\rho$ are subject to  coherence conditions well-known in the case $\sA=\Cat$  \cite[Ch.\ XI, \S 1]{McL}. 
\end{defn}

Next we define functors between categories internal to a $2$\nb-category by weakening Definition \ref{def:intcat}.

\begin{defn}\label{def:lax_intfun}
Let $\sC$, $\sD$ be internal categories in a strict $2$\nb-category $\sA$. Then a {\em functor} $f\colon \sC\to \sD$ is a quadruple $f=(f_0,f_1,\mu,\epsilon)$, where $f_0\colon \sC_0\to \sD_0$, $f_1\colon \sC_1\to \sD_1$  are morphisms, and $\mu$, $\epsilon$  are invertible $2$\nb-morphisms
\[
\xymatrix{
\sC_1\times_{\sC_0}\sC_1\ar[d]_{f_1\times f_1}\ar[r]^<>(.5){c_\sC}
&\sC_1\ar[d]^{f_1}\\
\sD_1\times_{\sD_0}\sD_1\ar@{=>}[ur]^{\mu}_\cong\ar[r]_<>(.5){c_\sD}&\sD_1
}
\qquad
\xymatrix{
\sC_0\ar[r]^{f_0}\ar[d]_{u}
&\sD_0\ar[d]^{u}\\
\sC_1\ar[r]^{f_1}\ar@{=>}[ur]^{\epsilon}_\cong&\sD_1
}
\]
It is required that the diagrams \eqref{eq:D_0diagrams} commute. The $2$\nb-morphisms $\mu$, $\epsilon$ are subject to three coherence conditions (see \cite[Diagrams (2.5) and (2.6)]{M}) as well as the usual condition that horizontal composition with the identity $2$\nb-morphisms $1_s$ or $1_t$ results in an identity $2$\nb-morphism. 
\end{defn}

\begin{defn}\label{def:lax_nattrans} Let $f,g\colon \sC\to \sD$ be internal functors between internal categories in a strict $2$\nb-category $\sA$. A {\em natural transformation} from $f=(f_0,f_1,\mu^f,\epsilon^f)$ to $g=(g_0,g_1,\mu^g,\epsilon^g)$ is a pair $n=(n,\nu)$, where $n\colon \sC_0\to D_1$ is a morphism, and $\nu$ is a $2$\nb-morphism:
\begin{equation}\label{eq:lax_nattrans}
\xymatrix{
\sC_1\ar[d]_{g_1\times n\,s}\ar[r]^<>(.5){n\,t\times f_1}
&\sD_1\times_{\sD_0}\sD_1\ar[d]^{c_\sD}\\
\sD_1\times_{\sD_0}\sD_1\ar[r]^{c_\sD}\ar@{=>}[ur]^{\nu}&\sD_1
}
\end{equation}
It is required that the first diagram of  \eqref{eq:nattrans} is commutative. There are two coherence conditions for the $2$\nb-morphism $\nu$ (see \cite[Diagrams (3.5) and (3.6)]{M}), and the usual requirement concerning horizontal composition with $1_s$ and $1_t$ (these are the identity $2$\nb-morphisms of the morphisms $s$ respectively $t$).
\end{defn}
Note that the $2$\nb-morphism $\nu$ is  not required to be invertible. Some authors add the word `lax' in this more general case. See Remark~\ref{rem:noninvertible_2cell} for the reason why this more general notion is important for twisted field theories.

\subsection{The internal Riemannian bordism category}\label{subsec:Riem_bord}
Now we are ready to define $\RB d$, the category of $d$\nb-dimensional Riemannian bordisms. This is a category internal to $\sA=\SymGrp$, the strict $2$\nb-category of symmetric monoidal groupoids. 

We should mention that in our previous paper \cite{HoST} we defined the  Riemannian bordism category $d\text{-\hskip -.01in{\sf RB}}$. This is just a category, rather than an internal category like $\RB d$. In this paper we are forced to deal with this more intricate categorical structure, since we wish to consider {\em twisted} field theories, in particular field theories of {\em non-trivial degree}.

Before giving the formal definition of $\RB d$, let us make some remarks that hopefully will motivate the definition below. 
Roughly speaking, $\RB{d}_0$ is the symmetric monoidal groupoid of closed Riemannian manifolds of dimension $d-1$, and $\RB{d}_1$ is the symmetric monoidal groupoid of Riemannian bordisms of dimension $d$. The problem is to define the composition functor
\[
c\colon \RB d_1\times_{\RB d_0}\RB d_1\ra \RB d_1.
\]
If $\Sigma_1$ is a Riemannian bordism from $Y_0$ to $Y_1$ and $\Sigma_2$ is a Riemannian bordism from $Y_1$ to $Y_2$, $c$ should map $(\Sigma_2,\Sigma_1)\in \RB d_1\times_{\RB d_0}\RB d_1$ to the Riemannian manifold $\Sigma:=\Sigma_2\cup_{Y_1}\Sigma_1$ obtained by gluing $\Sigma_2$ and $\Sigma_1$ along their common boundary component $Y_1$. The problem is that the Riemannian metrics on $\Sigma_1$ and $\Sigma_2$ might not fit together to give a Riemannian metric on $\Sigma$. A necessary, but not sufficient condition for this is that the second fundamental form of $Y_1$ as a boundary of $\Sigma_1$ matches with the second fundamental form of $Y_1$ as a boundary of $\Sigma_2$.

In the usual gluing process of $d$\nb-dimensional bordisms, the two glued bordisms intersect in a closed $(d-1)$\nb-dimensional manifold $Y$, the object of the bordism category which is the source (respectively target) of the bordisms to be glued. For producing a Riemannian  structure on the glued bordism (actually, even for producing a smooth structure on it), it is better if the intersection is an {\em open} $d$\nb-manifold on which the Riemannian structures are required to match. This suggests to refine an {\em object} of the bordism category $\RB d_0$ to be  a pair $(Y,Y^c)$, where  $Y$  is an open Riemannian $d$\nb-manifold, and $Y^c\subset Y$ (the {\em core} of $Y$) is a closed $(d-1)$\nb-dimensional submanifold of $Y$. We think of $Y$ as {\em Riemannian collar} of the $(d-1)$\nb-dimensional core manifold $Y^c$ (this core manifold is the only datum usually considered). In order to distinguish the domain and range of a bordism, we will in addition require a decomposition $Y\smallsetminus Y^c=Y^+\amalg Y^-$ of the complement into  disjoint open subsets, both of which contain $Y^c$ in its closure.  
Domain and range of a bordism is customarily controlled by comparing the given orientation of the closed manifold $Y^c$ with the orientation induced by thinking of it as a part of the boundary of an oriented bordism $\Sigma$. Our notion makes it unnecessary to furnish our manifolds with orientations.

Our main goal here is to define the $d$\nb-dimensional Euclidean bordism category $\EB d$. We first define the Riemannian bordism category $\RB d$ and then $\EB d$ as the variation where we insist that all Riemannian metrics are {\em flat} and that the cores $Y^c\subset Y$ are totally geodesic. We want to provide pictures and it's harder to draw interesting pictures of flat surfaces (\eg the flat torus doesn't embed in $\R^3$). 

\begin{defn}\label{def:RB} The {\em $d$\nb-dimensional Riemannian bordism category $\RB d$} is the category internal to the strict $2$\nb-category $\sA=\SymGrp$ of symmetric monoidal group\-oids  defined as follows. Note that all Riemannian manifolds that arise are without boundary.

\medskip

\noindent{\bf The object groupoid ${\RB d}_0$.} The {\em objects} of the groupoid ${\RB d}_0$ are quadruples  $(Y,Y^c,Y^\pm)$, where $Y$ is a Riemannian $d$-manifold (without boundary and usually non-compact) and $Y^c\subset Y$ is a {\em compact} codimension~$1$ submanifold which we call the {\em core} of $Y$. Moreover, we require that $Y\smallsetminus Y^c=Y^+\amalg Y^-$, where $Y^\pm\subset Y$ are disjoint open subsets whose closures contain $Y^c$. Often we will suppress the data  $Y^c,Y^\pm$ in the notation and write $Y$ for an object of ${\RB d}$.

 \begin{figure}[h]
\begin{center}
\scalebox{.60}{\includegraphics{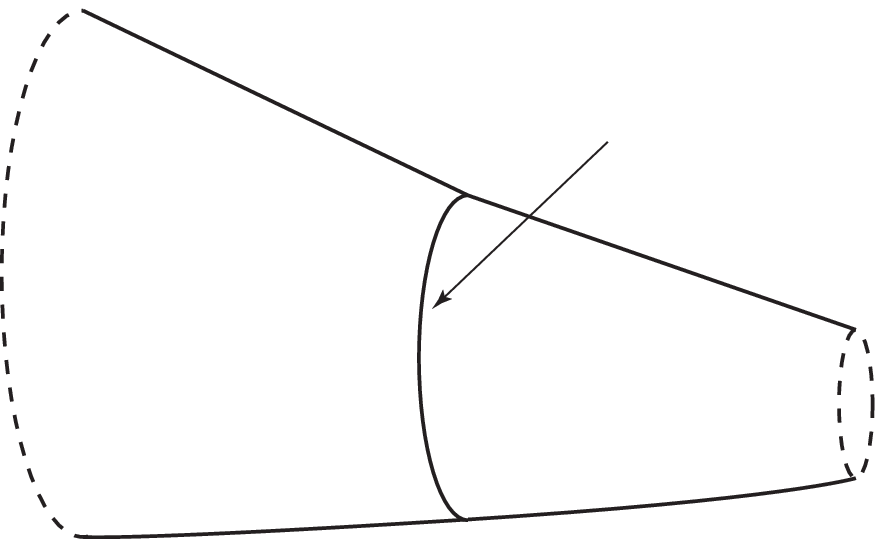}}
  \put(-130,30)	{$Y^-$}
  \put(-40,20)	{$Y^+$}
 \put(-45,75)	{$Y^c$}
\caption{ An object $(Y,Y^c,Y^\pm)$ of ${\RB 2}_0$}
\label{fig:object}
\end{center}
\end{figure}

An {\em isomorphism} in $\RB{d}$ from $Y_0$ to $Y_1$ is the germ of an invertible isometry $f\colon W_0\to W_1$. Here $W_j\subset Y_j$ are open neighborhoods of $Y_j^c$ and $f$ is required to send $Y_0^c$ to $Y_1^c$ and $W_0^\pm$ to $W_1^\pm$ where $W_j^\pm:=W_j\cap Y_j^\pm$. As usual for germs, two such isometries represent the {\em same} isomorphism if they agree on some smaller open neighborhood of $Y_0^c$ in $Y_0$.  

We remark that if $(Y,Y^c,Y^\pm)$ is an object of $\RB{d}$, and $W\subset Y$ is an open neighborhood of $Y^c$, then $(Y,Y^c,Y^\pm)$ is isomorphic to $(W,Y^c,Y^\pm\cap W)$. In particular, we can always assume that $Y$ is diffeomorphic to $Y^c\times (-1,+1)$, since by the tubular neighborhood theorem, there is always a neighborhood $W$ of $Y^c$ such that the pair $(W,Y^c)$ is diffeomorphic to $(Y^c\times (-1,+1),Y^c\times \{0\})$. 
 
 \medskip
 
\noindent{\bf The morphism groupoid ${\RB d}_1$} is defined as follows. 
An object of ${\RB d}_1$ consists of a pair $Y_0=(Y_0,Y^c_0)$, $Y_1=(Y_1,Y^c_1)$ of objects of ${\RB d}_0$ (the source respectively target) and a {\em Riemannian bordism} from $Y_0$ to $Y_1$, which is a triple  $(\Sigma,i_0,i_1)$ consisting of a Riemannian $d$-manifold $\Sigma$ and smooth maps $i_j\colon W_j\to \Sigma$. Here $W_j\subset Y_j$ are open neighborhoods of the cores $Y^c_j$.  Letting $i_j^\pm: W_j^\pm \to \Sigma$ be the restrictions of $i_j$ to $W_j^\pm:=W_j\cap Y_j^\pm$, we require that 
\begin{itemize}
\item[(+)] $i_j^+$ are isometric embeddings into $\Sigma \smallsetminus i_1(W_1^-\cup Y_1^c)$ and
\item[$(c)$] the {\em core} $\Sigma^c:=
\Sigma\smallsetminus \left(i_0(W_0^+)\cup i_1(W_1^-)\right)$ is compact.
\end{itemize}
Particular  bordisms are given by isometries $f\colon W_0\to W_1$ as above, namely by using $\Sigma= W_1$, $i_1=\id_{W_1}$ and $i_0=f$. Note that in this case the images of $i_0^+$ and $i_1^+$ are {\em not disjoint} but we didn't require this condition. 

Below is a picture of a Riemannian bordism; we usually draw the domain of the bordism to the right of its range, since we want to read compositions of bordisms, like compositions of maps, from right to left.
\begin{figure}[h]
\begin{center}
\scalebox{.80}{\includegraphics{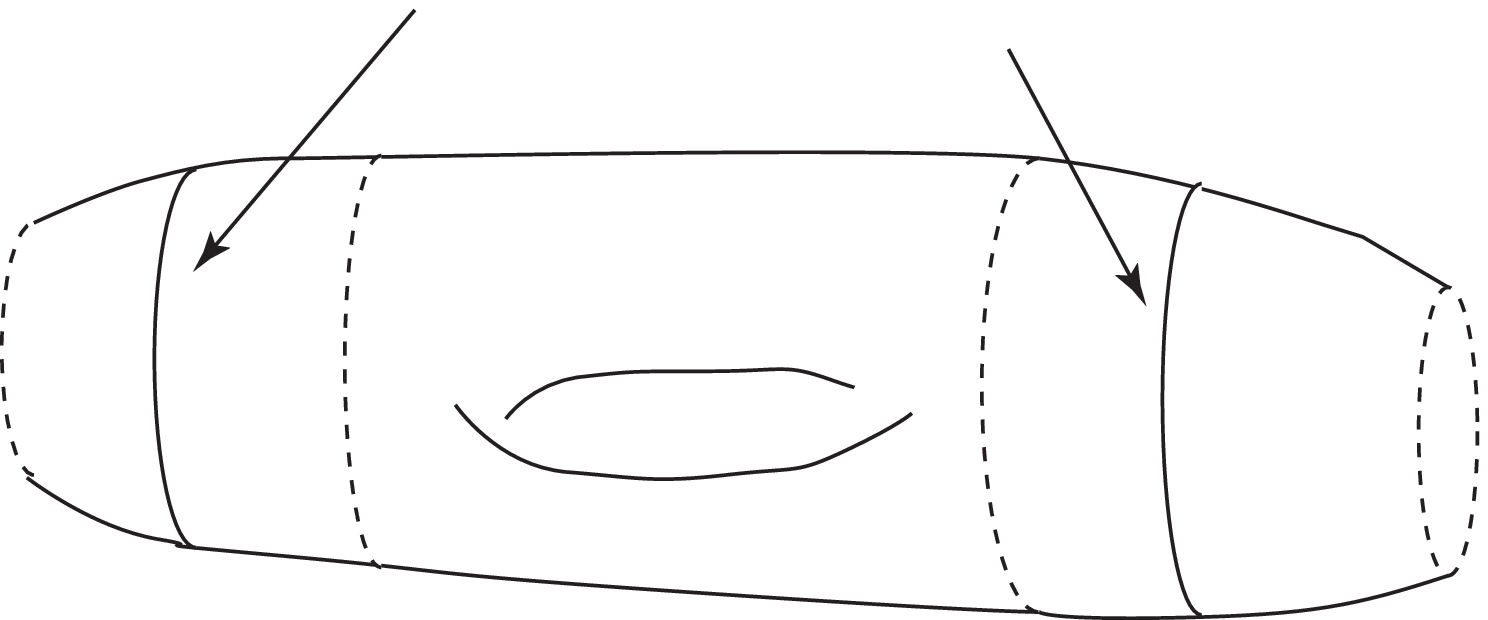}}
 \put(-55,50)	{$\scriptstyle i_0(W^+_0)$}
\put(-110,50)	{$\scriptstyle i_0(W^-_0)$}
\put(-125,140)	{$\scriptstyle i_0(Y^c_0)$}
\put(-180,70)	{$\Sigma$}
  \put(-295,55)	{$\scriptstyle i_1(W^+_1)$}
 \put(-338,55)	{$\scriptstyle i_1(W^-_1)$}
\put(-255,150)	{$\scriptstyle i_1(Y^c_1)$}
 \put(-300,-5)	{$\underbrace{\quad\qquad\qquad\qquad\qquad\qquad\qquad\qquad\qquad\qquad\quad}_{\Sigma^c}$}
 \caption{ A Riemannian bordism (object of ${\RB 2}_1$)}
\label{fig:bordism}
\end{center}
\end{figure}
Roughly speaking, a bordism between objects $Y_0$ and $Y_1$ of ${\RB d}_0$ is just an ordinary bordism $\Sigma^c$ from $Y_0^c$ to $Y_1^c$ equipped with a Riemannian metric, thickened up a little bit at its boundary to make gluing possible..

A morphism from a bordism $\Sigma$ to a bordism $\Sigma'$ is a germ of a triple of isometries
\begin{align*}
F\colon X\to X'
\qquad
f_0\colon V_0\to V'_0
\qquad
f_1\colon V_1\to V'_1.
\end{align*}
Here $X$ (respectively $V_0, V_1$) is an open neighborhood of $\Sigma^c\subset \Sigma$ (respectively $Y^c_0\subset W_0\cap i_0^{-1}(X), Y^c_1\subset W_1\cap i_1^{-1}(X)$) and similarly for $X'$, $V_0'$, $V_1'$. We require the conditions for $f_j$ to be a morphism from $Y_j$ to $Y_j'$ in ${\RB d}_0$, namely $f_j(Y^c_j)=(Y_j')^c$ and $f_j(V_j^\pm)=(V_j')^\pm$. In addition, we require that these isometries are compatible in the sense that the diagram
\[
\xymatrix{
V_1\ar[r]^{i_1}\ar[d]_{f_1}
&X\ar[d]_F&V_0\ar[l]_{i_0}\ar[d]_{f_0}\\
V'_1\ar[r]_{i'_1}
&X'&V'_0\ar[l]^{i'_0}
}
\]
is commutative. Two such triples $(F,f_0,f_1)$ and $(G,g_0,g_1)$ represent the same germ if there there are smaller open neighborhoods $X''$ of $\Sigma^c\subset X$ and  $V_j''$ of $Y_j\subset V_j\cap i_j^{-1}(X'')$ such that  $F$ and $G$ agree on $X''$, and $f_j$ and $g_j$ agree on $V_j''$ for $j=0,1$.

\medskip

\noindent{\bf Source, target, unit and composition functors.} The functors 
\[
s,t\colon {\RB d}_1\ra {\RB d}_0
\]
send a bordism $\Sigma$ from $Y_0$ to $Y_1$ to $Y_0$ respectively $Y_1$. There is also the functor
 \[
u\colon {\RB d}_0\ra {\RB d}_1
\]
that sends $(Y,Y^c)$ to the Riemannian bordism given by the identity isometry $\id\colon Y\to Y$.
 These functors are compatible with taking disjoint unions and hence they are symmetric monoidal functors, \ie morphisms in $\SymGrp$.

There is also a {\em composition functor}
\[
c\colon {\RB d}_1\times_{{\RB d}_0}{\RB d}_1\ra{\RB d}_1
\]
given by gluing bordisms. Let us describe this carefully, since there is a subtlety involved here due to the need to adjust the size of the Riemannian neighborhood along which we glue. Let $Y_0$, $Y_1$, $ Y_2$ be objects of ${\RB d}_0$, and let $\Sigma$, $\Sigma'$ be  bordisms from $Y_0$ to $Y_1$ respectively from $Y_1$ to $Y_2$. These data involve in particular smooth maps
\[
i_1\colon W_1\ra \Sigma
\qquad
i_1'\colon W_1'\ra \Sigma',
\]
where $W_1,W_1'$ are open neighborhoods of $Y^c_1\subset Y_1$. We set $W_1''\=W_1\cap W_1'$ and note that our conditions guarantee that $i_1$ (respectively $i_1'$) restricts to an isometric embedding of  $(W_1'')^+\=W_1''\cap Y_1^+$ to $\Sigma$ (respectively $\Sigma'$). We use these isometries to glue $\Sigma$ and $\Sigma'$ along $(W_1'')^+$ to obtain $\Sigma''$ defined as follows:
\[
\Sigma''\=\left(\Sigma'\smallsetminus i_1'((W'_1)^+\smallsetminus (W_1'')^+)\right)
\cup_{(W_1'')^+}
\left(\Sigma\smallsetminus i_1(W^-_1\cup Y_1^c)\right)
\]
The maps $i_0\colon W_0\to \Sigma$ and $i_2\colon W_2\to \Sigma'$ can be restricted to maps (on smaller open neighborhoods) into $\Sigma''$ satisfying our  conditions. This makes $\Sigma''$ a bordism from $Y_0$ to $Y_2$. 

As explained above (see Equation \eqref{eq:lax_associativity}), the composition functor $c$ is not strictly associative, but there is a natural transformation $\alpha$ as in diagram \eqref{eq:lax_associativity} which satisfies the pentagon identity. 
\end{defn}

\begin{rem}\label{rem:Hausdorff}
We point out that conditions (+) and (c) in the above definition of a Riemannian bordism also make sure that the composed bordism is again a Hausdorff space. In other words, gluing two topological spaces along open subsets preserves conditions like `locally homeomorphic to $\R^n$' and structures like Riemannian metrics. However, it can happen that the glued up space is not Hausdorff, for example if one glues two copies of $\R$ along the interval $(0,1)$. The reader is invited to check that our claim follows from the following easy lemma.
\end{rem}
\begin{lem}\label{lem:Hausdorff}
Let $X,X'$ be manifolds and let $U$ be an open subset of $X$ and $X'$. Then $X\cup_U X'$ is a manifold if and only if the natural map $U\to X \times X'$ sends $U$ to a {\em closed} set.
\end{lem}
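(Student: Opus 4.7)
The plan is to verify separately the two ingredients of ``manifold'': being locally Euclidean of fixed dimension, and being Hausdorff. The locally Euclidean piece is automatic from the pushout construction, since $U$ is open in each of $X,X'$: every point of $X\cup_U X'$ has a neighborhood coming from a chart of $X$ or of $X'$, and the transition on $U$ is the identity. So the entire content of the lemma is that the Hausdorff property of the pushout is equivalent to closedness of $j(U)\subset X\times X'$, where $j\colon U\to X\times X'$ is $u\mapsto (u,u)$.

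First I would isolate the only non-trivial pair of points that might fail to be separated. Two points both coming from $X$ (respectively $X'$) are separated using disjoint neighborhoods in $X$ (resp.\ $X'$), which remain disjoint open in the pushout because $X,X'\to X\cup_U X'$ are open embeddings. A point $[u]$ with $u\in U$ can be separated from any other point by a small neighborhood contained in $U$, using Hausdorffness of $X$ (or $X'$). Thus the only possible obstruction to Hausdorffness is a pair $([p],[p'])$ with $p\in X\smallsetminus U$ and $p'\in X'\smallsetminus U$.

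Next I translate non-separation of such a pair into a limit condition. Unwinding the quotient topology on $X\cup_U X'$, the points $[p]$ and $[p']$ fail to be separated iff every pair of open neighborhoods $V\ni p$ in $X$ and $V'\ni p'$ in $X'$ satisfies $V\cap V'\cap U\neq\emptyset$ (intersection taken inside $U$). Because manifolds are first countable, this is in turn equivalent to the existence of a sequence $u_n\in U$ with $u_n\to p$ in $X$ and $u_n\to p'$ in $X'$, i.e.\ $(p,p')\in\overline{j(U)}$. Since $p\notin U$, such a pair also satisfies $(p,p')\notin j(U)$, so its existence is exactly the failure of $j(U)$ to be closed.

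To close the equivalence I must verify that conversely any limit point of $j(U)$ outside $j(U)$ has both coordinates outside $U$; this is a short case-check: if a candidate limit point $(p,p')$ with $u_n\to p$ in $X$, $u_n\to p'$ in $X'$ has $p\in U$, then $U$ open in $X'$ forces $u_n\to p$ in $X'$ as well, and Hausdorffness of $X'$ gives $p=p'\in U$, contradicting $(p,p')\notin j(U)$; similarly if $p'\in U$. The main subtlety is the careful bookkeeping in these cases and the appeal to first countability to pass between the neighborhood formulation and the sequential one; everything else is routine.
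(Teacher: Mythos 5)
Your argument is correct: reducing to the Hausdorff property (local Euclidean-ness and second countability being automatic since $X,X'\to X\cup_U X'$ are open embeddings), observing that the only pairs at risk are $[p],[p']$ with $p\in X\smallsetminus U$, $p'\in X'\smallsetminus U$, and translating their non-separation into membership of $(p,p')$ in $\overline{j(U)}\smallsetminus j(U)$ — together with the converse case-check that any such limit point has both coordinates outside $U$ — establishes exactly the stated equivalence. The paper itself offers no proof (the lemma is left as an "easy lemma" for the reader), and your proof is the standard argument it implicitly intends; the only cosmetic slip is the remark that a point $[u]$, $u\in U$, is separated "by a small neighborhood contained in $U$" — what one actually uses is that $[u]$ and the other point lie in a common open copy of $X$ or of $X'$, which your parenthetical appeal to Hausdorffness of $X$ (or $X'$) already captures.
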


\subsection{The internal category of vector spaces}\label{subsec:vect}

\begin{defn}\label{def:TV} The category $\TV$ of (complete locally convex) topological vector spaces internal to $\SymGrp$ (the strict $2$\nb-category category of symmetric monoidal groupoids) is defined by the object respectively morphism groupoids as follows.
\begin{itemize}
\item[$\TV_0$] is the groupoid whose objects are complete locally convex topological vector spaces over $\C$ and whose morphisms are invertible continuous linear maps. The completed projective tensor product gives $\TV_0$ the structure of a symmetric monoidal groupoid.
\item[$\TV_1$] is the symmetric monoidal groupoid whose objects are continuous linear maps $f\colon V_0\to V_1$. The morphisms from $f\colon V_0\to V_1$ to $f'\colon V_0'\to V_1'$ are a pair of isomorphisms $(g_0,g_1)$ making the diagram
\[
\xymatrix{
V_0\ar[r]^{g_0}_\cong\ar[d]_f&V_0'\ar[d]^{f'}\\
V_1\ar[r]^{g_1}_\cong&V_1'
}
\]
commutative. It is a symmetric monoidal groupoid via the projective tensor product.
\end{itemize}
There are obvious source, target, unit and composition functors
\[
s,t\colon \TV_1\ra \TV_0
\qquad
u\colon \TV_0\ra \TV_1
\qquad
c\colon \TV_1\times_{\TV_0}\TV_1\ra \TV_1
\]
which make $\TV$ an internal category in $\SymGrp$. 
This is a {\em strict} internal category in the sense that associativity holds on the nose (and not just up to natural transformations).
\end{defn}

Now we are ready for a preliminary definition of a $d$\nb-dimensional Euclidean field theory, which will be modified by adding a smoothness condition in the next section.

\begin{defn}\label{def:prelim}{\bf (Preliminary!)} A $d$\nb-dimensional {\em Riemannian field theory} over a smooth manifold $X$ is a functor 
\[
E\colon \RB d(X)\ra\TV
\]
of categories internal to $\SymGrp$,  the strict $2$\nb-category of symmetric monoidal group\-oids. A functor $E\colon \RB{d}(\pt)=\RB{d}\to \TV$ is a $d$\nb-dimensional  Riemannian field theory.

Similarly, a $d$\nb-dimensional {\em Euclidean field theory} over a smooth manifold $X$ is a functor 
\[
E\colon \EB d(X)\ra\TV
\]
of categories internal to $\SymGrp$, where the Euclidean bordism category ${\EB d}$ is defined completely analogously to $\RB{d}$ by using Euclidean  structures ($=$ flat Riemannian metrics) instead of Riemannian metrics, and by requiring that for any object $(Y,Y^c,Y^\pm)$ the core $Y^c$ is a totally geodesic submanifold of $Y$. 
\end{defn}

The feature missing from the above definition is the requirement that $E$ should be {\em smooth}. Heuristically, this means that the vector space $E(Y)$ associated to an object $Y$ of the bordism category as well as the operator $E(\Sigma)$ associated to a bordism $\Sigma$ should depend smoothly on $Y$ respectively $\Sigma$. To make this precise, we replace the categories ${\EB d}$, $\TV$ by their {\em family versions}  whose objects and morphisms are {\em smooth families} of objects/morphisms of the original category parametrized by some manifold. 
\subsection{Families of rigid geometries}\label{subsec:rigid}

In this subsection we define families of Euclidean $d$\nb-manifolds, or more generally a families of manifolds equipped with a rigid geometry. This leads to the definition of the bordism category $\GMBord$ of manifolds with rigid geometry $(G,\M)$ (see Definition \ref{def:GMBord}) and to the notion of a field theory based on a rigid geometry (see Definition \ref{def:GMFT}).

Let $G$ be a Lie group acting on a manifold $\M$. We want to think of $\M$  as the local model for {\em rigid geometries} with isometry group $G$. This idea is very well explained in \cite{Th} and goes back to Felix Klein. 

\begin{defn}\label{def:GM_object} 
A {\em $(G,\M)$-structure} on a manifold $Y$ is a maximal atlas consisting  of {\em charts} which are diffeomorphisms 
\[
\xymatrix{
Y\supseteq U_i\ar[r]^{\varphi_i}_\cong& V_i\subseteq \M
}
\]
between open subsets of $Y$ and open subsets of $\M$ such that the $U_i$'s cover $Y$, and a collection of elements $g_{ij}\in G$ which determine the transition functions in the sense that for every $i,j$ the diagram
\[
\xymatrix{
&U_i\cap U_j\ar[ld]_{\varphi_j}\ar[dr]^{\varphi_i}&\\
\M\ar[rr]^{g_{ij}}&&\M
} 
\]
is commutative (here we interpret $g_{ij}\in G$ as an automorphism of $\M$ via the action map $G\times\M\to \M$). The $g_{ij}$'s are required to satisfy the cocycle condition
\begin{equation}\label{eq:cocycle}
g_{ij}\cdot g_{jk}=g_{ik}
\end{equation}
\end{defn}

We note that if $G$ acts effectively on $\M$, then the element $g_{ij}\in G$ is determined by $\varphi_i$ and $\varphi_j$. In particular, the cocycle condition is automatic, and the definition can be phrased in a simpler way as in our previous paper \cite[Definition 6.13]{HoST}. In this paper we also wish to consider cases where $G$ does not act effectively on $\M$ as in the second example below. 

For our definition of the bordism category $\GMBord$ we will also need the notion of a $(G,\M)$-structure on a pair $(Y,Y^c)$ consisting of a manifold $Y$ and a codimension one submanifold $Y^c$.

\begin{defn}\label{def:GMpair} Let $(G,\M)$ be a geometry and let $\M^c\subset \M$ be a codimension one submanifold of $\M$ (there is no condition relating $\M^c$ and the $G$\nb-action). From now on, a `geometry' will refer to such a triple $(G,\M,\M^c)$, but we will suppress $\M^c$ in our notation. A {\em $(G,\M)$-structure on a pair $(Y,Y^c)$}   is a maximal atlas consisting of charts $\{(U_i,\varphi_i)\}$ for $Y$ as in Definition \ref{def:GM_object} above with the additional requirement that $\varphi_i$ maps $U_i\cap Y^c$ to $\M^c\subset\M$.
\end{defn}

\begin{ex}\label{ex:euclidean}
Let  $\M:=\E^d:=\R^d$ be the $d$\nb-dimensional Euclidean space, given by the manifold $\R^d$ equipped with its standard Riemannian metric. Let $G:=\Iso(\E^d)$ be the isometry group of $\E^d$. More explicitly, $\Iso(\E^d)$  is the semi-direct product $\E^d\rtimes O(d)$ of $\E^d$ (acting on itself by translations) and the orthogonal group $O(d)$. Let $\M^c:=\R^{d-1}\times \{0\}\subset \E^d$.

 A {\em Euclidean structure} on a smooth $d$-manifold $Y$ is an $(\Iso(\E^d), \E^d)$-structure in the above sense. It is clear that such an atlas determines a flat Riemannian metric on $Y$ by transporting the metric on $\E^d$ to $U_i$ via the diffeomorphism $\varphi_i$. Conversely, a flat Riemannian metric on a manifold $Y$ can be used to construct such an atlas.  
An $(\Iso(\E^d), \E^d)$\nb-pair amounts to:
\begin{itemize}
\item a flat Riemannian manifold $Y$;
\item a totally geodesic codimension one submanifold $Y^c\subset Y$;
\end{itemize}
\end{ex}

\begin{ex}\label{ex:euclidean_spin}
A {\em Euclidean spin structure} on a $d$\nb-manifold $Y$ is exactly an $(\Iso(\E^{d|0}),\E^{d|0})$-structure on $Y$. Here $\E^{d|0}=\E^d$, and $\Iso(\E^{d|0})=\E^d\rtimes Spin(d)$ is a double covering of $\E^d\rtimes SO(d)\subset \E^d\rtimes O(d)$ and acts on $\E^d$ via the double covering map. The peculiar notation $(\Iso(\E^{d|0}),\E^{d|0})$ is motivated by later generalizations: in section \ref{subsec:superEuclidean} we will define the super Euclidean space $\E^{d|\delta}$ (a supermanifold of dimension $d|\delta$) and the super Euclidean group $\Iso(\E^{d|\delta})$ (a super Lie group which acts on $\E^{d|\delta}$).

We note that the  action of $G=\Iso(\E^{d|0})$ on $\E^{d}$ is {\em not effective}, since $-1\in Spin(d)$ acts trivially. 
However, it lifts to an effective action on the principal $Spin(d)$\nb-bundle $\E^d\times Spin(d)\to \E^d$ where the translation subgroup  $\E^d$ acts trivially on $Spin(d)$, and $Spin(d)\subset G$ acts by left-multiplication. This implies that the transition elements $g_{ij}\in G$ determine a {\em spin-structure} on $Y$, \ie a principal $Spin(d)$\nb-bundle $Spin(Y)\to Y$ which is a double covering of the oriented frame bundle $SO(Y)$. Hence a $(\Iso(\E^{d|0}),\E^{d|0})$\nb-structure on a manifold $Y$ determines a flat Riemannian metric and a spin structure. Conversely, a spin structure on a flat Riemannian manifold $Y$ determines a $(\Iso(\E^{d|0}),\E^{d|0})$\nb-structure. 
\end{ex}

The last example shows that the definition of morphisms between manifolds equipped with $(G,\M)$-structures requires some care: if $Y$, $Y'$ are manifolds with $(\Iso(\E^{d|0}),\E^{d|0})$\nb-structures, a morphisms $Y\to Y'$ should not just be an isometry $f\colon Y\to Y'$ compatible with the spin structures, but it should include an {\em additional datum}, namely a map $Spin(Y)\to Spin(Y')$ of the principal $Spin(d)$\nb-bundles. In particular, each $(\Iso(\E^{d|0}),\E^{d|0})$\nb-manifold $Y$ should have an involution which acts trivially on $Y$, but is multiplication by $-1\in Spin(d)$ on the principal bundle $Spin(Y)\to Y$. 

It may be informative to compare this rigid geometry to other version of geometric structures on manifolds. If a Lie group $H$ acts on a finite dimensional vector space $V$, an {\em $H$-structure} on a smooth manifold X is an $H$-principal bundle $P$ together with an isomorphism $P \times_H V \cong  TX$. 
An example of such an $H$-structure is the {\it flat $H$-structure\/} on the vector space~$V$ itself.
It is given by $P:=V\times H\to V$ and the isomorphism is given by~$P\times_H V
=(V\times H)\times_H V\cong V\times V\cong TV$ via translation in~$V$.
\begin{defn}  An $H$-structure on~$X$ is {\it integrable\/} if it is locally flat.
\end{defn}
\noindent Examples of this notion include
\begin{enumerate}
\item An integrable $GL_n(\C)$-structure (on $V=\C^n$) is a complex structure: There are complex charts as discussed below.
\item For $H=Sp(2n)$ and $V=\R^{2n}$ integrable structures
are symplectic structures by Darboux's theorem.
\item  For $H=O(n)$ and $V=\R^n$ integrable structures are flat Riemannian metrics.
\item For $H=U(n)$ and $V=\C^n$ integrable structures are flat K\"ahler structures.
\end{enumerate}
The total space of the cotangent bundle $T^*X$ carries a canonical (exact) symplectic structure. Moreover,
a Riemannian metric on~$X$ induces one on~$T^*X$.
This metric is integrable if and only if the original metric is flat.
Nevertheless, it turns out that $T^*X$ is always K\"ahler.

There is a chart version of integrable $H$-structures.
Choose a covering collection of charts on the manifold~$X$ with codomain being an open
subset of a vector space~$V$. We can now say that an integrable $H$-structure
is a lift of the derivatives of transition functions $\varphi_{ij}$ along the map $\rho\colon H\to GL(V)$
that satisfies the usual cocycle conditions. 

An $H$-structure on~$X$ is {\it rigid\/} (and in particular integrable) if each transition function $\varphi_{ij}$ is the (locally constant)
restriction of the action on~$V$ by the semi-direct product $G= (V,+) \rtimes H$ of `translations and rotations', see Definition~\ref{def:GM_object}. For $H=O(n)$ and $V=\R^n$ we get the notion of a rigid Euclidean manifold used in this paper (which is equivalent to an integrable $O(n)$-structure).
For a general rigid geometry one would generalize the model space from a vector space to an arbitrary (homogenous) $H$-space.

We now turn to the morphisms between manifolds with rigid geometries.
\begin{defn}\label{def:GM_morphism} We denote by $\GMMan$ the category whose objects are $(G,\M)$-manifolds. If  $Y$, $Y'$ are $(G,\M)$-manifolds, a morphism  from $Y$ to $Y'$ consists of a smooth map $f\colon Y\to Y'$ and  elements $f_{i'i}\in G$ for each pair of charts $(U_i,\varphi_i)$, $(U'_{i'},\varphi'_{i'})$ with $f(U_i)\subset U'_{i'}$ such that the diagram
\[
\xymatrix{
U_i\ar[r]^f\ar[d]_{\varphi_i}&U_{i'}\ar[d]^{\varphi'_{i'}}\\
\M\ar[r]^{f_{i'i}}&\M
}
\]
commutes. These elements of $G$ are required to satisfy the coherence condition
\[
f_{j'j}\cdot g_{ji}=g'_{j'i'}\cdot f_{i'i}
\]
If the $G$\nb-action on $\M$ is effective, then the elements $f_{i'i}\in G$ are determined by $f$ and the charts $\varphi_i$, $\varphi_{i'}$, and so an isometry is simply a map $f\colon Y\to Y'$ satisfying a condition (namely, the existence of the $f_{i'i}$'s satisfying the requirements above). We note that these conditions do not imply that $f$ is surjective or injective; \eg a $(G,\M)$-structure on $Y$ induces a $(G,\M)$-structure on any open subset $Y'\subset Y$, or covering space $Y'\to Y$ and these maps $Y'\to Y$ are morphisms of $(G,\M)$-manifolds in a natural way. 
 \end{defn}

Next, we define a `family version' or `parametrized version' of the category $\GMMan$.

\begin{defn}\label{def:GM_fam} A {\em family of $(G,\M)$-manifolds} is a smooth map  $p\colon Y\to S$ together with a maximal atlas consisting of charts which are diffeomorphisms $\varphi_i$ between open subsets of $Y$ and open subsets of $S\times\M$ making the following diagram commutative:
\[
\xymatrix{
Y\supseteq U_i\ar[rr]^<>(.5){\varphi_i}_<>(.5)\cong\ar[rd]_p&&V_i\subseteq S\times \M\ar[ld]^{p_1}\\
&S&
}
\]
In addition, there are {\em transition data} which are smooth maps $g_{ij}\colon p(U_i\cap U_j)\to G$ which make the diagrams
\begin{equation}\label{eq:transition}
\xymatrix{
&&U_i\cap U_j\ar[lld]_{\varphi_j}\ar[drr]^{\varphi_i}&&\\
p(U_i\cap U_j)\times\M\ar[rr]^<>(.5){\id\times g_{ij}\times\id}&&p(U_i\cap U_j)\times G\times \M\ar[rr]^{\id\times\mu}&&p(U_i\cap U_j)\times\M
} 
\end{equation}
and 
\[
\xymatrix{
p(U_i\cap U_j\cap U_k)\ar[d]_{g_{ij}\times g_{jk}}\ar[dr]^{g_{ik}}&\\
G\times G\ar[r]^\mu&G
}
\]
commutative. Here $\mu$ is the multiplication map of the Lie group $G$.   We note that the conditions imply in particular that $p$ is a {\em submersion} and $p(U_i\cap U_j)\subseteq S$ is open. 

A {\em family of $(G,\M)$-pairs} is a smooth map $p\colon Y\to S$, a codimension one submanifold $Y^c\subset Y$, and a maximal atlas consisting of charts $\{(U_i,\varphi_i)\}$ for $Y$ as above, with the additional requirement that $\varphi_i$ maps $U_i\cap Y^c$ to $S\times \M^c\subset S\times\M$.

If $Y\to S$ and $Y'\to S'$ are two families of $(G,\M)$-manifolds, a {\em morphism} from $Y$ to $Y'$ consists of the following data:
\begin{itemize}
\item  a pair of maps $(f,\hat f)$ making the following diagram commutative:
\[
\xymatrix{
Y\ar[d]\ar[r]^{\hat f}&Y'\ar[d]\\
S\ar[r]^f&S'
}
\]
\item a smooth map $f_{i'i}\colon p(U_i)\to G$ for each pair of charts $(U_i,\varphi_i)$ of $Y$ respectively $(U'_{i'},\varphi_{i'})$ of $Y'$ with $\wh f(U_i)\subset U'_{i'}$ making the diagrams
\[
\xymatrix{
U_i\ar[rrr]^{\wh f}\ar[d]_{\varphi_i}&&&U'_{i'}\ar[d]^{p_2\circ \varphi_{i'}}\\
p(U_i)\times\M\ar[rr]^<>(.5){f_{i'i}\times\id}&&G\times\M\ar[r]^<>(.5)\mu&\M
}
\]
and
\[
\xymatrix{
p(U_i\cap U_j)\ar[rrr]^{\id\times f_{j'j}}\ar[dd]_{\id\times g_{ij}}&&&
p(U_i\cap U_j)\times G\ar[d]^{g'_{i'j'}\times\id}\\
&&&G\times G\ar[d]^\mu\\
p(U_i\cap U_j)\times G\ar[rr]^<>(.5){f_{i'i}\times\id}&&G\times G\ar[r]^<>(.5)\mu&G
}
\]
commutative. 
\end{itemize}

Abusing notation, we will usually write $\GMMan$ for this family category, but we use the notation $\GMMan/\Man$ if we want to emphasize that we talk about the family version.
\end{defn}
\subsection{Categories with flips}\label{subsec:cat_flip}

Let $(G,\M)$ be a geometry in the sense discussed in the previous subsections. We note that an element $g$ in the center of $G$ determines an automorphism $\theta_Y\colon Y\to Y$ for any $(G,\M)$-manifold $Y$. This automorphism is induced by multiplication by $g$ on our model space $\M$ (more precisely, in terms of Definition \ref{def:GM_morphism} it is given by setting $f_{i'i}=g$ for every $i,i'$). For example, for $G=\Iso(\E^{d|0})=\E^d\rtimes Spin(d)$, the center of $G$ is $\{\pm 1\}\subset Spin(d)$. 
If $Y$ is a $(G,\E^d)$\nb-manifold, \ie a Euclidean manifold with spin structure (see Example \ref{ex:euclidean_spin}), then $\theta_Y$ is multiplication by $-1\in Spin(d)$ on the principal $Spin(d)$\nb-bundle $Spin(Y)\to Y$ (in particular, it is the identity on the underlying manifold). As in our previous paper \cite{HoST} we will refer to $\theta_Y$ as `spin-flip'. 

Let $E$ be a Euclidean spin field theory in the sense of the preliminary Definition \ref{def:prelim}, , \ie a functor 
\[
E\colon \EB{d|0}\to \TV.
\]
of categories internal to  $\SymGrp$, the strict $2$\nb-category of symmetric monoidal group\-oids. Here $\EB{d|0}$ is the variant of the Euclidean bordism category $\EB{d}$  where `Euclidean structures' are replaced by `Euclidean spin structures' (see Example \ref{ex:euclidean_spin}). Then $E$ determines in particular a symmetric monoidal functor
\begin{equation}\label{eq:flip_example}
E_0\colon \EB{d|0}_0\ra \TV_0
\end{equation}
which we can apply to an object $Y$ of that category. Thought of as a morphism $\theta_Y\in \EB{d|0}_0(Y,Y)$, the spin-flip induces an involution on the vector space $E(Y)$; \ie $E(Y)$ becomes a {\em super vector space} (for notational simplicity we drop the subscript $E_0$). If $Y'$ is another object of the bordism category, then the symmetric monoidal functor $E$ gives a commutative diagram
\[
\xymatrix{
EY\otimes EY'\ar[r]\ar[d]_{\sigma_{EY,EY'}}^\cong&E(Y\amalg Y')\ar[d]^{E(\sigma_{Y,Y'})}_\cong\\
EY'\otimes EY\ar[r]&E(Y'\amalg Y)
}
\]
Here $\sigma$ is the braiding isomorphism (in both categories), and the horizontal isomorphisms are part of the data of the symmetric monoidal functor $E$ (see \cite[Ch.\ XI, \S 2]{McL}). Unfortunately, this is {\em not} what we want, and this reveals another shortcoming of the preliminary definition \ref{def:prelim}. 
We should emphasize that $\sigma_{EY,EY'}$ is the braiding isomorphism in the category of {\em ungraded} vector spaces given by 
\[
E(Y)\otimes E(Y')\ra E(Y')\otimes E(Y)
\qquad
v\otimes w\mapsto w\otimes v
\]
What we want is the commutativity of the above diagram, but with $\sigma_{EY,EY'}$ being the braiding isomorphism in the category of super vector spaces given by 
\[
v\otimes w\mapsto (-1)^{|v||w|}w\otimes v
\]
 for homogeneous elements $v,w$ of degree $|v|,|w|\in \Z/2$. That suggests to replace the symmetric monoidal category $\TV_0$ by its `super version' consisting of $\Z/2$\nb-graded topological vector spaces, the projective tensor product, and the desired braiding isomorphism of super vector spaces. However, this doesn't solve the problem: $E(Y)$ now has two involutions: its grading involution $\theta_{EY}$ as super vector space and the involution $E(\theta_Y)$ induced by the spin-flip $\theta_Y$. So we {\em require} that these two involutions agree, as in \cite[Definition 6.44]{HoST}. We will refer to $\theta_Y$ as a {\em flip} since the terminology `twist' used in the context of balanced monoidal categories  \cite[Def.\ 6.1]{JS} unfortunately conflicts with our use of `twisted field theories'.

\begin{rem}\label{rem:spin-statistic}
The requirement that the grading involution on the quantum space space is induced by the spin flip of the world-sheet is motivated by the $1|1$-dimensional $\Sigma$-model with target a Riemannian spin manifold $X$. It turns out that the flip on the world-sheet $\R^{1|1}$ quantizes into the grading involution on the quantum state space $\Gamma(X;S)$, the spinors on $X$. 
\end{rem}
\begin{defn}\label{def:cat_flip} A {\em flip} for a category $\sC$ is a natural family of isomorphisms 
\[
\theta_Y\colon Y\overset\cong\ra Y
\]
for $Y\in \sC$. If $\sC$, $\sD$ are categories with flips, a functor $F\colon \sC\to \sD$ is {\em flip-preserving} if $F(\theta_Y)=\theta_{FY}$. 

If $F,G\colon \sC\to \sD$ are two flip-preserving functors, and $N\colon F\to G$ is a natural transformation, we note that the commutativity of the diagram
\[
\xymatrix{
FY\ar[r]^{NY}\ar[d]_{\theta_{FY}}&GY\ar[d]^{\theta_{GY}}\\
FY\ar[r]^{NY}&GY
}
\]
is automatic due to $\theta_{FY}=F(\theta_Y)$, $\theta_{GY}=G(\theta_Y)$. In other words, we don't need to impose any restrictions on natural transformations (other than being natural transformation between flip-preserving functors) to obtain a strict $2$\nb-category $\Cat^\fl$ whose objects are categories with flips, whose morphisms are flip preserving functors, and whose $2$\nb-morphisms are natural transformations. 
\end{defn}

Here are two basic examples of categories with flips.

\begin{ex}\label{ex:SVect} Let $\SVect$ be the category of super vector spaces.  An object of $\SVect$ is a vector space $V$ equipped with a `grading involution' $\theta_V\colon V\to V$ (which allows us to write $V=V^{ev}\oplus V^{odd}$, where $V^{ev}$ (respectively $V^{odd}$ is the $+ 1$\nb-eigenspace (respectively $-1$\nb-eigenspace of $\theta_V$). Morphisms from $V$ to $W$ are linear maps $f\colon W\to V$ compatible with the grading involutions in the sense that $f\circ \theta_V=\theta_W\circ f$. In particular, $\theta_V$ is a morphism in $\SVect$, and hence $\SVect$ is a {\em category  with flip}. 

In fact, this is a {\em symmetric monoid} in the strict $2$\nb-category $\Cat^\fl$ of categories with flip. For $V,W\in \SVect$, the tensor product $V\otimes W$ is the usual tensor product of vector spaces equipped with grading involution $\theta_{V\otimes W}=\theta_V\otimes\theta_W$, and the braiding isomorphism is described above. To check compatibility of the symmetric monoidal structure with the flip, we only need to check that the {\em functors} defining the symmetric monoid in $\Cat$ are compatible with the flip (as discussed above, there are no compatibility conditions for the natural transformations). These two functors are 
\begin{itemize}
\item the tensor product
\[
c\colon \SVect\times\SVect\ra \SVect
\qquad (V,W)\mapsto V\otimes W
\]
which is compatible with flips by construction of the grading involution,
\item the unit functor $u$ from the terminal object of $\Cat$, the discrete category with one object, to $\SVect$. The functor $u$ maps the unique object to the monoidal unit $\one\in\SVect$, which is the ground field with the trivial involution. In particular, $\theta_{\one}=\id_{\one}$, which is one way of saying that the functor $u$ preserves the flip.
\end{itemize}
\end{ex}

\begin{ex}\label{ex:g-flip} Let $(G,\M)$ be a rigid geometry, and let $g$ be an element of the center of $G$ which acts trivially on the model space $\M$. Then as discussed at the beginning of this section, $g$ determines an automorphism $\theta_Y$ for families of  $(G,\M)$-manifolds $Y$. In other words, $g$ determines a flip $\theta$ for the category $\GMMan$. The functor $p\colon \GMMan\to \Man$ is flip preserving if we equip $\Man$ with the trivial flip given by $\theta_M=\id_M$. This gives in particular a flip for the categories
$\EB{d|0}_i, i=0,1$, in our motivating example \eqref{eq:flip_example}; using our new terminology we want to require that $E_i$ are symmetric monoidal functors preserving the flip.
\end{ex} 
\subsection{Fibered categories}\label{subsec:fibered}

Before defining the family versions of the bordism categories $\GMBord$ associated to a geometry $(G,\M)$ in the next section, we recall in this section the notion of a 
{\em Grothendieck fibration}. An excellent reference is \cite{Vi}, but we recall the definition for the convenience of the reader who is not familiar with this language. Before giving the formal definition, it might be useful to look at an example. Let $\Bun$ be the category whose objects are smooth fiber bundles $Y\to S$, and whose morphisms from $Y\to S$ to $Z\to T$ are smooth maps $f$, $\phi$ making the diagram 
\begin{equation}\label{eq:bundlemap}
\xymatrix{
Y\ar[r]^{\phi}\ar[d]&Z\ar[d]\\
S\ar[r]^f&T
}
\end{equation}
commutative. 
Let us consider the forgetful functor 
\begin{equation}\label{eq:forgetfunctor}
p\colon \Bun\ra \Man
\end{equation}
which sends a bundle to its base space. We note that if $Z\to T$ is a bundle, and $f\colon S\to T$ is a smooth map, then there is a pull-back bundle $f^*Z\to S$, and a tautological  morphism of bundles $\phi\colon Y=f^*Z\to Z$ which maps to $f$ via the functor $p$. The bundle morphism $\phi$ enjoys a universal property called {\em cartesian}, which  more generally can be defined for any morphism $\phi\colon Y\to Z$ of a category $\sB$ equipped with a functor $p\colon \sB\to \sS$ to another category $\sS$.  In the following diagrams, an arrow going from an object $Y\in \sB$ to an object  $S\in\sS$, written as $Y\mapsto S$, will mean  $p (Y) = S$. Furthermore, the commutativity of the diagram
   \begin{equation}\label{eq:square}
   \xymatrix{{}Y\ar@{|->}[d]\ar[r]^\phi &{}Z\ar@{|->}[d] \\
   S\ar[r]^f & T}
   \end{equation}
will mean $p (\phi )= f$.

\begin{defn} \label{def:cartesian} Let $p\colon\sB\to\sS$ be a functor. An arrow  $\phi\colon Y \to Z$ of $\sB$ is \emph{cartesian} if for any arrow $\psi\colon X \to Z$ in $\sB$ and any arrow $g \colon p(X) \to p (Y)$ in $\sS$ with $p (\phi) \circ  g = p( \psi)$, there exists a unique arrow $\theta \colon X \to Y$ with $p (\theta)  = g$ and $\phi\circ \theta = \psi$, as in the commutative diagram
   \[
   \xymatrix@R=6pt{
   {X}\ar@{|->}[dd] \ar@{-->}[rd]_{\theta}
   \ar@/^/[rrd]^<>(.6){\psi} \\
   & {}Y\ar@{|->}[dd]\ar[r]_{\phi}
   & {}Z\ar@{|->}[dd] \\
   {}R\ar[rd]_g\ar@/^/[rrd]|(.487)\hole^<>(.6)h\\
   & {}S \ar[r]_f
   & {}T  
   }
   \]
If $\phi\colon Y\to Z$ is cartesian, we say that the diagram \eqref{eq:square} is a {\em cartesian square}.
 \end{defn}

In our example of the  forgetful functor $p\colon \Bun\to \Man$, a bundle morphism $(f,\phi)$ as in diagram \eqref{eq:bundlemap} is cartesian \Iff $\phi$ is a fiberwise diffeomorphism. In particular, the usual pullback of bundles provides us with many cartesian squares. This implies that the functor $p$ is a {\em Grothendieck fibration} which is defined as follows.

\begin{defn}\label{def:fibered} A functor $p\colon \sB\to \sS$ is a {\em (Grothendieck) fibration}  if pull-backs exist: for every object $Z\in \sB$ and every arrow $f\colon S\to T=p(Z)$ in $\sS$, there is a cartesian square
  \[
   \xymatrix{{}Y\ar@{|->}[d]\ar[r]^\phi &{}Z\ar@{|->}[d] \\
   S\ar[r]^f & T}
   \]

A \emph{fibered category over $\sS$} is a category $\sB$ together with a functor $p\colon \sB\to \sS$ which is a fibration. If $p_\sB\colon \sB\to \sS$ and $p_\sC\colon \sC\to \sS$ are fibered categories over $\sS$, then a \emph{morphism of fibered categories} $F \colon \sB \to \sC$ is a base preserving functor ($p_\sB \circ F = p_{\sC}$) that sends cartesian arrows to cartesian arrows.

There is also a notion of base-preserving natural transformation between two morphisms from $\sB$ to $\sC$. These form the $2$\nb-morphisms of a strict $2$\nb-category $\Cat/\sS$ whose objects are categories fibered over $\sS$ and whose morphisms are morphisms of fibered categories. 
\end{defn}

\begin{rem} \label{rem:stack} There is a close relationship between categories fibered over a category $\sS$ and pseudo-functors $\sS^{op}\to \Cat$ (see \cite[Def.\ 3.10]{Vi} for a definition of `Pseudo-functor'; from an abstract point of view, this is a $2$\nb-functor, where we interpret $\sS^{op}$ as a $2$\nb-category whose only $2$\nb-morphisms are identities). Any pseudo-functor determines a fibered category with an extra datum called `cleavage', and conversely, a category $\sC$ fibered over $\sS$ with cleavage determines a pseudo-functor $\sS^{op}\to \Cat$ (see \cite[\S 3.1.2 and \S 3.1.3]{Vi}). Since the `space of cleavages' of a fibered category is in some sense contractible, it is mostly a matter of taste which language to use (see discussion in last paragraph of \S 3.1.3 in \cite{Vi}). 
We prefer the language of fibered categories. An advantage of the Pseudo-functor approach is the definition of a `stack' is a little easier, but also this can be done in terms of fibered categories (see \cite[Ch.\ 4]{Vi}).
\end{rem}

In our example $\Bun\to \Man$ of a fibered category, there is a symmetric monoidal structure we haven't discussed yet: if $Y\to S$ and $Z\to S$ are fiber bundles over the same base manifold $S$, we can form the disjoint union $Y\amalg Z\to S$ to obtain a new bundle over $S$. This gives a morphism of fibered categories 
\[
c\colon\Bun\times_\Man\Bun\ra\Bun
\]
which makes $\Bun$ a symmetric monoid in the strict $2$\nb-category $\Cat/\Man$ of categories fibered over $\Man$ (with the monoidal structure given by the categorical product). Of course, the other data $u$, $\alpha$, $\lambda$, $\rho$ of a symmetric monoid (see Def.\ \ref{def:symm_monoid}) need to be specified as well -- this is left to the reader. 

\subsection{Field theories associated to rigid geometries}\label{subsec:GMBord}

The goal of this section is Definition \ref{def:GMFT} of a field theory associated to a `rigid geometry' $(G,\M)$ (see section \ref{subsec:rigid}). We begin by defining the (family) bordism category $\GMBord$. The definition will be modeled on Definition~\ref{def:RB} of the Riemannian bordism category, but replacing Riemannian structures by $(G,\M)$-structures. 
The bordism category $\GMBord$ is a category internal to the strict $2$\nb-category $\Sym(\Cat^\fl/\Man)$ of symmetric monoids in the $2$\nb-category of categories with flip fibered over $\Man$. In fact, $\GMBord_0$ and $\GMBord_1$, the two categories over $\Man$, have a useful additional property: they are {\em stacks} (see Remark \ref{rem:stack}). However, we won't discuss this property here, since it is not needed for the proofs of our results in this paper.

We recall that for any geometry $(G,\M)$, there is a category $\GMMan$ of families of $(G,\M)$-manifolds  $Y\to S$. The obvious forgetful functor
\[
\GMMan\to \Man
\]
 which sends a family to its parameter space is a Grothendieck fibration. Moreover, as discussed in Example \ref{ex:g-flip}, they are flip preserving. Hence it is an object in the strict $2$\nb-category $\Cat^\fl/\Man$ of categories with flip fibered over $\Man$. In fact, like $\Bun\to \Man$, these categories  over $\Man$ are symmetric monoids in $\Cat^\fl/\Man$. The monoidal structure is given by the disjoint union. 
  
 Now we can define the categories $\GMBord_i$ for $i=0,1$, simply by repeating Definition \ref{def:RB} word for word, replacing `Riemannian $d$\nb-manifolds' by `families of $(G,\M)$-manifolds'.

\begin{defn}\label{def:GMBord} An object in ${\GMBord}_0$ is a quadruple $(Y,Y^c,Y^\pm)\to S$ consisting of a family of $(G,\M)$-pairs $(Y,Y^c)$ (see Definition \ref{def:GM_fam}) and a decomposition of $Y\setminus Y^c$ as the disjoint union of subspaces $Y^\pm$. It is required that $p\colon Y^c\to S$ is {\em proper}. This assumption is a family version of  our previous assumption in Definition~\ref{def:RB} that $Y^c$ is compact, since it reduces to that assumption for $S=\pt$. 

A morphism from $(Y_0,Y_0^c,Y_0^\pm)\to S_0$ to $(Y_1,Y_1^c,Y_1^\pm)\to S_1$ is a morphism from $W_0\to S$ to $W_1\to S$ in $\GMMan$. Here $W_j\subset Y_j$ are open neighborhoods of $Y_j^c$, and it is required that this map sends $W_0\cap Y_0^c$ to $Y_1^c$ and $W_0\cap Y_0^\pm$ to $Y_1^\pm$. More precisely, a morphism  is a  {\em germ} of such maps, \ie two such maps represent the {\em same} morphism in $\GMBord_0$ if they agree on some smaller open neighborhood  of $Y_0^c$ in $Y_0$.

An object in $\GMBord_1$ consists of the following data
\begin{enumerate}
\item a manifold $S$ (the parameter space);
\item a pair of objects $(Y_0,Y_0^c,Y_0^\pm)\to S$, $(Y_1,Y_1^c,Y_1^\pm)\to S$ of $\GMBord_0$ over the same parameter space $S$ (the source respectively target);
\item an object $\Sigma\in \GMMan_S$ (\ie an $S$\nb-family of $(G,\M)$-manifolds); 
\item smooth maps  $i_j\colon W_j\to \Sigma$ compatible with the projection to $S$. Here $W_j\subset Y_j$ are open neighborhoods of the cores $Y_j^c$. 
\end{enumerate}
 Letting $i_j^\pm: W_j^\pm \to \Sigma$ be the restrictions of $i_j$ to $W_j^\pm:=W_j\cap Y_j^\pm$, we require that 
\begin{itemize}
\item[(+)] $i_j^+$ are morphisms in $\GMMan_S$ from $W_j^+$ to $\Sigma \smallsetminus i_1(W_1^-\cup Y_1^c)$ and
\item[$(c)$] the restriction of $p_\Sigma$ to the {\em core} $\Sigma^c:=
\Sigma\smallsetminus \left(i_0(W_0^+)\cup i_1(W_1^-)\right)$ is proper.
\end{itemize}
Morphisms in the category $\GMBord_1$ are defined like the morphisms in $\RB{d}_1$ in Definition \ref{def:RB}, except that now the maps $F$, $f_0$, $f_1$ are morphisms in the category $\GMMan$.

The categories $\GMBord_0$, $\GMBord_1$ are fibered over the category $\Man$ via the functor $p\colon \GMBord_i\to \Man$ associating to an object in $\GMMan$ its parameter space $S\in \Man$. This functor preserves the flips (given on $\GMBord_i$ by the choice of an element $g$ in the center of $G$ which acts trivially on $\M$; see Example \ref{ex:g-flip}) and the trivial flip on $\Man$ (\ie $\theta_S=\id_S$ for all $S\in \Man$). This functor is also a Grothendieck fibration, and hence 
it is a  symmetric monoid in the strict $2$\nb-category $\Cat^\fl/\Man$. The monoidal structure is given by the disjoint union. As explained in Definition \ref{def:RB} for the Riemannian bordism category $\RB{d}$, the two objects $\GMBord_i$ of the strict $2$\nb-category $\Sym(\Cat^\fl/\Man)$ of symmetric monoids in $\Cat^\fl/\Man$ fit together to give an internal category, which we denote $\GMBord$ (or $\GMBord/\Man$ if we want to emphasize that we are thinking of families here). We call $\GMBord$ the {\em bordism category of $(G,\M)$-manifolds}. In the special case $(G,\M)=(\Iso(\E^d),\E^d)$, we call this internal category the {\em $d$\nb-dimensional Euclidean bordism category}, and write $\EB{d}$ (or $\EB{d}/\Man$).
\end{defn}

Next is the definition of the family version of the internal category $\TV$. Abusing notation we will write $\TV$ for this family version as well, or $\TV/\Man$ if we wish to distinguish it from its non-family version. Like the family bordism category $\GMBord$, the category $\TV$ is internal to the $2$\nb-category $\Sym(\Cat^\fl/\Man)$. At first it seemed natural to us to think of a `family of vector spaces' as a vector bundle over the parameter space. Later we noticed that we should let go of the local triviality assumption for reasons outlined in Remark \ref{rem:sheaves}.

\begin{defn}\label{def:famTV}  The internal category $\TV$ consists of categories $\TV_0$, $\TV_1$ with flip fibered over $\Man$. The categories $\TV_0$, $\TV_1$ are defined as follows. 
\begin{itemize}
\item[$\TV_0$] An object of $\TV_0$ is a manifold $S$ and a sheaf $V$ over $S$ of (complete, locally convex) $\Z/2$\nb-graded topological modules over the structure sheaf $\scr O_S$ (of smooth functions on $S$).
A morphism from a sheaf $V$ over $S$ to a sheaf $W$ over $T$ is a smooth map $f\colon S\to T$ together with a continuous $\scr O_T(U)$\nb-linear map $V(f^{-1}(U))\to W(U)$ for every open subset $U\subset T$ (here  $\scr O_T(U)$ acts on $V(f^{-1})$ via the algebra homomorphism $f^*\colon \scr O_T(U) \to \scr O_S(f^{-1}(U))$. As in the category of super vector space $\SVect$ (see Example \ref{ex:SVect}), the flip is given by the grading involution of the sheaf $V$. 
\item[$\TV_1$] An object of $\TV_1$ consists of a manifold $S$, a pair of sheaves of topological $\scr O_S$-modules $V_0$, $V_1$ and an $\scr O_S$\nb-linear map of sheaves $V_0\to V_1$. We leave the definition of morphisms as an exercise to the reader. 
\end{itemize}
Both of these fibered categories with flip are symmetric monoids in $\Cat^\fl/\Man$; the monoidal product is given by the projective tensor product over the structure sheaf. 
 \end{defn}

\begin{defn}\label{def:GMFT}
A $(G,\M)$-field theory is a functor
\[
\GMBord\ra \TV
\]
of categories internal to  $\Sym(\Cat^\fl/\Man)$, the strict $2$\nb-category of symmetric mon\-oids in the $2$\nb-category of categories with flip fibered over $\Man$. If $X$ is a smooth manifold, a  {\em $(G,\M)$-field theory over $X$} is a functor 
\[
E\colon \GMBord(X)\ra \TV,
\]
where $\GMBord(X)$ is the generalization of $\GMBord$ obtained by furnishing every $(G,\M)$-manifold with the additional structure of a smooth map to $X$. 
Specializing the geometry $(G,\M)$ to be the  Euclidean geometry $(\Iso(\E^d),\E^d)$, where $\Iso(\E^d)$ is the group of  isometries, we obtain the notion of a Euclidean field theory of dimension $d$. 
\end{defn}

\section{Euclidean field theories and their partition function}
\label{sec:2EFT}
\subsection{Partition functions of $\EFT{2}$'s}
\label{subsec:partition_2EFT}
In this subsection we will discuss Euclidean field theories of dimension $2$ and $2|1$. The most basic invariant of a $2$\nb-dimensional EFT $E$ is its {\em partition function}, which we defined in Definition \ref{def:partition} to be the function $Z_E\colon \fh\ra \C$, $\tau\mapsto E(T_\tau)$,
where $T_\tau=\C/\Z\tau+\Z$. While this definition is good from the point of view that it makes contact with the definition of modular forms as functions on $\fh$, it is not good in the sense that a partition function should look at the value of $E$ on every closed Euclidean $2$\nb-manifold, but not every closed Euclidean $2$\nb-manifold is isometric to one of the form $T_\tau$. 
We observe that every closed oriented Euclidean $2$\nb-manifold is isometric to a torus 
\[
T_{\ell,\tau}:=\ell(\Z\tau+\Z)\backslash \E^2
\]
obtained as the quotient of the subgroup $\ell(\Z\tau+\Z)\subset\E^2\subset\Iso(\E^2)$ acting on $\E^2$ for some $\ell\in \R_+$, $\tau\in \fh$. Here we switch notation since we want to write objects in the bordism categories $\GMBord$ systematically as quotients of the model space $\M$ by a left action of a subgroup of $G$ acting freely on $\M$ (we insist on a {\em left} action since in the case of Euclidean structures on supermanifolds the group $G$ is not commutative). 
Then we extend the domain of $Z_E$ from $\fh$ to $\R_+\times \fh$ by defining
\[
Z_E\colon \R_+\times \fh\ra \C
\qquad
\tau\mapsto E(T_{\ell,\tau}).
\]
 Abusing notation, we will again use the notation $Z_E$ for this extension, and refer to it as `partition function'. 
 
  A torus $T_{\ell,\tau}$ is isometric to $T_{\ell',\tau'}$ \Iff\ $(\ell,\tau)$ and $(\ell',\tau')$ are in the same orbit of the $SL_2(\Z)$\nb-action on $\R_+\times \fh$ given by 
\begin{equation}\label{eq:SL_action}
\left(\begin{matrix}
a&b\\c&d\end{matrix}\right)
(\ell,\tau)=\left(\ell |c\tau+d|,\frac{a\tau+b}{c\tau+d}\right)
\end{equation}
If we forget about the first factor, the quotient stack $SL_2(\Z)\doublebackslash \fh$ has the well-known interpretation as the moduli stack of {\em conformal structures} on pointed tori. Similarly, the moduli stack of {\em Euclidean structures} on pointed tori can be identified with the quotient stack $SL_2(\Z)\doublebackslash \left(\R_+\times \fh\right)$. As in the conformal situation, the product $\R_+\times \fh$ itself can be interpreted as the moduli space of Euclidean tori furnished with a basis for their integral first homology. Then the $SL_2(\Z)$\nb-action above corresponds to changing the basis.

What can we say about the partition function $Z_E$? First of all, it is a {\em smooth} function. To see this, we note that the tori $T_{\ell,\tau}$ fit together to a smooth bundle 
\begin{equation}\label{eq:univ_bundle}
p\colon \Sigma\to \R_+\times \fh
\end{equation}
 with a fiberwise Euclidean structure, such that the fiber over $(\ell,\tau)\in \R_+\times \fh$ is the Euclidean torus $T_{\ell,\tau}$.  Applying $E$ to this smooth family results in a smooth function on the parameter space $\R_+\times \fh$. Compatibility of $E$ with pullbacks guarantees that this is the function $Z_E$. 
Secondly, the partition function $Z_E$ is {\em invariant under the $SL_2(\Z)$\nb-action}, since if $(\ell,\tau)$, $(\ell',\tau')$ are in the same orbit, then, as mentioned above,  the tori $T_{\ell,\tau}$ and $T_{\ell',\tau'}$ are isometric, and
hence $E(T_{\ell,\tau})=E(T_{\ell',\tau'})$.

\begin{rem}\label{rem:ell_independence}
If the function $Z_E(\ell,\tau)$ is  independent of $\ell\in \R_+$, then the invariance of $Z_E$ under the $SL_2(\Z)$\nb-action on $\R_+\times\fh$ implies that $Z_E(1,\tau)$ has the transformation properties of a modular form of weight zero. This is the case \eg if $E$ is a {\em conformal} field theory, since the {\em conformal} class of the torus $T_{\ell,\tau}$ is independent of the scaling factor $\ell$. If $E$ is not conformal, there is no reason to expect $Z_E(\ell,\tau)$ to be independent of $\ell$, and hence no reason for $Z_E(1,\tau)$ to be invariant under the $SL_2(\Z)$\nb-action. 
Similarly, even if $E$ is a conformal theory, one shouldn't expect $Z_E(1,\tau)$ to be a {\em holomorphic} function, unless $E$ is {\em holomorphic} in the sense that the operators associated to any bordism $\Sigma$ depend holomorphically on the parameters determining the conformal structure on $\Sigma$. A precise definition of a holomorphic theory can be given in the terminology of this paper by working with families of conformal bordisms parametrized by complex instead of smooth manifolds. 

However, as we will see in the proof of Theorem \ref{thm:modular}, if $E$ has an extension to a supersymmetric Euclidean field theory of dimension $2|1$, then the function $Z_E(\ell,\tau)$ is independent of $\ell$ and holomorphic in $\tau$. 
\end{rem}

As a step towards our proof of Theorem \ref{thm:modular} we will prove the following result in the next section.

\begin{prop}\label{prop:partition_2EFT} Let $f\colon \fh\to \C$ be a $SL_2(\Z)$\nb-invariant holomorphic function, meromorphic at infinity with $q$\nb-expansion $f(\tau)=\sum_{k=-N}^\infty a_kq^k$ with non-negative integral Fourier coefficients $a_k$. Then $f$ is the partition function of a $\CEFT{2}$, a conformal Euclidean field theory (see right below for a definition). 
\end{prop}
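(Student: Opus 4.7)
\medskip
\noindent\textbf{Proof plan.}  The strategy is to realize $f$ directly as the partition function of a chiral theory, using the generators-and-relations approach from \S\ref{subsec:generators}.  Write the $q$-expansion as $f(\tau) = \sum_{k \geq -N} a_k q^k$ with $a_k \in \Z_{\geq 0}$, and let $V = \bigoplus_{k \geq -N} V_k$ be a $\Z$-graded complex vector space with $\dim V_k = a_k$ (all $V_k$ finite-dimensional, and the whole sum topologized as the Hilbert space completion if one wishes, though one can also just keep it algebraic since all but finitely many $a_k$ are in $\Z_{>0}$).  Equip $V$ with the grading operator $L_0$ acting as multiplication by $k$ on $V_k$.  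This is the candidate state space on a standard circle.

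The next step is to assign operators to Euclidean cylinders.  Every Euclidean cylinder arising as a morphism in $\EB{2}$ is isometric to one built from the product geometry on $\R \times (\R/\ell\Z)$, cut out by a fundamental domain encoded by a single modular parameter $\tau \in \fh$ (with real part recording the twist and imaginary part the height, both divided by the circumference $\ell$).  I would declare the associated operator on $V$ to be $e^{2\pi i \tau L_0}$, which is trace class for every $\tau \in \fh$ because the Fourier coefficients of a holomorphic modular function meromorphic at $\infty$ grow at most polynomially, so $\sum |a_k|\, |q|^k < \infty$ whenever $|q| < 1$.  Only $L_0$ appears, not $\bar L_0$, so the operator depends holomorphically on $\tau$; this is what makes the theory chiral and forces its partition function to be independent of $\ell$, hence conformal in the sense of Remark~\ref{rem:ell_independence}.

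Composition of cylinders corresponds to addition of $\tau$'s in the upper half-plane, which matches composition of the operators $e^{2\pi i \tau L_0}$; this is the key coherence that lets the assignment extend to a functor on the internal subcategory of $\EB{2}$ generated by circles and cylinders.  Closing up a cylinder by identifying its two ends produces the torus $T_{\ell,\tau}$, and the partition function becomes the trace
\[
Z_E(\ell,\tau) = \tr_V\bigl(e^{2\pi i \tau L_0}\bigr) = \sum_{k \geq -N} a_k q^k = f(\tau),
\]
as required.  The $SL_2(\Z)$-invariance of the resulting function on $\R_+ \times \fh$ is exactly the $SL_2(\Z)$-invariance of $f$, which was hypothesized; hence all tori isometric to $T_{\ell,\tau}$ receive the same number, and the assignment descends well-defined to $\EB{2}$.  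Smoothness in families follows since $(\ell,\tau) \mapsto e^{2\pi i \tau L_0}$ is a smooth (in fact holomorphic in $\tau$) family of trace-class operators.

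The main obstacle is the bookkeeping in producing a \emph{functor} out of the full internal bordism category $\EB{2}$, not just a rule on tori: one must choose operators compatibly for cylinders with both orientations, for gluings that realize the same torus in different ways, and for the isometries (2-morphisms) between them.  Here the essential simplification, as emphasized throughout \S\ref{sec:2EFT}, is that Euclidean flatness forces every closed surface in $\EB{2}$ to be a torus, so there are no higher-genus coherences to check; the generators-and-relations framework of \S\ref{subsec:generators} reduces compatibility to a finite list of identities that follow from the additivity $e^{2\pi i (\tau_1 + \tau_2) L_0} = e^{2\pi i \tau_1 L_0} \circ e^{2\pi i \tau_2 L_0}$ and from our choice of $V$ as the state space on every circle (with the obvious rescaling-equivariance in~$\ell$).
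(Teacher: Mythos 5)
Your overall route is the paper's route: build a graded state space with $\dim V_k=a_k$, let rotation act by $q^k$ on $V_k$, assign $q^{L_0}$ to cylinders, and invoke the presentation of the (conformal) Euclidean bordism category to promote this to a functor. But there is a genuine gap in how you topologize $V$. You offer the Hilbert space completion "if one wishes," or even no completion at all; neither works with the definition of field theory used here. Smoothness is part of the definition (the functor is between family/fibered categories), and on the Hilbert completion the circle action $\tau\mapsto e^{2\pi i\tau L_0}$ is \emph{not} smooth: its infinitesimal generator acts by $2\pi i k$ on $V_k$, has unbounded spectrum (unless $f$ is constant), and is therefore discontinuous on a Hilbert space. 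This is exactly the point of Remark~\ref{rem:completion}: one must leave Banach spaces and take the Fr\'echet completion with seminorms $\|v\|_n^2=\sum_k |v_k|^2 k^{2n}$, on which the generator is continuous and the action map $\bar\fh\times V\to V$ is smooth. Keeping $V$ purely algebraic also fails, since the copairing $\rho(\tau)$ is an infinite sum and the target category consists of complete locally convex spaces.

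A second, smaller gap: the presentation result (Proposition~\ref{prop:realizing_EFT}) does not take a semigroup of cylinder operators as input; it takes the triple $(V,\lambda,\rho)$ attached to the generators $K$, $L_0$, $R_\tau$, subject to conditions (a)--(d). The torus is realized as the composition $L_0\circ R_\tau$ of the two elbows, not by "identifying the two ends" of $C_\tau$ (there is no categorical trace operation available here), so you must actually exhibit the symmetric pairing $\lambda$ and the copairing $\rho(\tau)=\sum_k\sum_i q^k e_i\otimes e_i$ and check the compatibility conditions; the cylinder operator $e^{2\pi i\tau L_0}$ is then recovered as $(\id\otimes\lambda)(\rho(\tau)\otimes\id)$. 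Finally, a minor slip: the Fourier coefficients of a weakly holomorphic modular function need not grow polynomially (those of $j$ grow like $e^{4\pi\sqrt k}$); convergence of $\sum a_k|q|^k$ for $|q|<1$ is nevertheless automatic because $f$ is holomorphic in $q$ on the punctured disk, so your trace-class conclusion survives, but the stated reason is wrong.
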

 
\subsection{Generators and relations of $\EB{2}$}\label{subsec:generators}

A representation of a group $G$ can be thought of as a functor $G\to \Vect$ from $G$, viewed as a groupoid with one object whose automorphism group is $G$, into the category of vector spaces. Hence a field theory, a functor from a bordism category to the category of (topological) vector spaces,  can be thought of as a `representation' of the bordism category. In the same way a presentation of $G$ in terms of generators and relations is helpful when trying to construct a representation of $G$, a `presentation' of the bordism category  is helpful for the construction of  field theories.

In this section we will construct a presentation of $\CEB{2}$, the {\em conformal (oriented) Euclidean bordism category}, a variant of $\EB{2}$ based on the geometry 
\[
(\E^2\rtimes(SO(2)\times\R_+),\E^2) \cong (\C \rtimes \C^\times, \C)
\]
where $\ell\in \R_+$ acts on $\E^2$ by multiplication by $\ell$.  The reason for our interest in $\CEB{2}$ is that it is simpler to write down a presentation for $\CEB{2}$ than for $\EB{2}$ (see Proposition \ref{prop:realizing_EFT}). Also, every conformal (oriented) Euclidean field theory gives an (oriented) Euclidean field theory by precomposing with the obvious functor between bordism categories. We will use $\EFT{2}$'s of this type to prove Proposition \ref{prop:partition_2EFT}.
We begin by describing particular objects of the categories $\EB{2}_0$ and $\EB{2}_1$.

\medskip
\noindent{\bf The circle $K_\ell\in \EB{2}_0$}. 
 The core of this object is the circle of length $\ell>0$, which we prefer to think of as $\ell\Z\backslash \E^1$. The collar neighborhood is $Y=\ell \Z\backslash\E^2\supset \ell\Z\backslash\E^1=Y^c$; the complement $Y\setminus Y^c$ decomposes as the disjoint union of $Y^+=\ell \Z\backslash\E^2_+$ and $Y^-=\ell \Z\backslash\E^2_-$, where $\E^2_\pm$ is the upper (respectively lower) open half plane. The group $\ell Z$ acts on $\E^2$ via the embeddings $\ell\Z\subset\R\subset\E^2\subset \Iso(\E^2)$. We note that there are simpler ways to describe this object,  but it is this description that generalizes nicely to the case of Euclidean supermanifolds of dimension $2|1$. 
Below is a picture of the object $S^1_\ell$.
\begin{figure}[h]
\begin{center}
\scalebox{.80}{\includegraphics{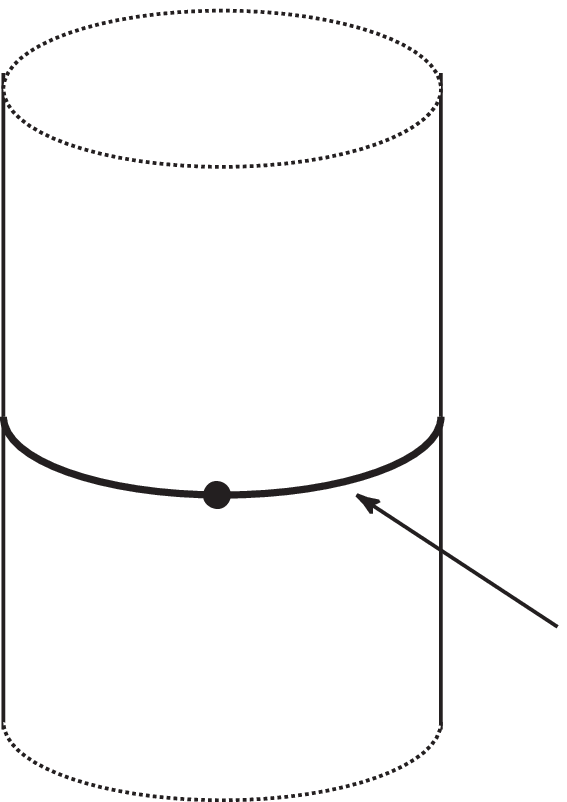}}
 \put(-90,20)	{$\ell \Z\backslash\E^2_-$}
\put(-82,55)	{$0$}
\put(-90,110)	{$\ell \Z\backslash\E^2_+$}
 \put(0,25)	{$\ell \Z\backslash\E^1$}
  \caption{{\bf The object $K_\ell$ of $\EB{2}_0$}}
\label{fig:K}
\end{center}
\end{figure}

\medskip

\noindent{\bf The cylinder  $C_{\ell,\tau}\in \EB{2}_1(K_\ell,K_\ell)$}.
We recall that an object of $\EB{2}_1$ is a pair $Y_0$, $Y_1$ of objects of $\EB{2}_0$ and bordism from $Y_0$ to $Y_1$, \ie a triple
\[
(\xymatrix@1{W_1\ar[r]^{i_1}
&\Sigma&W_0\ar[l]_{i_0}})
\]
where $\Sigma$ is a Euclidean $d$\nb-manifold, $W_j$ is a neighborhood of $Y_j^c\subset Y_j$ for $j=0,1$, and $i_0$, $i_1$ are local isometries such that certain conditions are satisfied (see Definition \ref{def:RB} and figure \ref{fig:bordism}). We make $C_{\ell,\tau}$ precise as an object of $\EB{2}_1$ by declaring it to be the following bordism from $K_\ell$ to itself:
\[
C_{\ell,\tau}:=\left(\xymatrix@1{\ell \Z\backslash\E^2\ar[r]^{\id}
&\ell \Z\backslash\E^2&\ell \Z\backslash\E^2\ar[l]_{\ell\tau}}\right),
\]
where $\ell\tau\in\fh\subset\E^2\subset\Iso(\E^2)$ induces an isometry on the quotient $\ell \Z\backslash\E^2$ since it commutes with $\ell\in \Iso(\E^2)$, see figure \ref{fig:bordism}.

\begin{figure}[h]
\begin{center}
\scalebox{.60}{\includegraphics{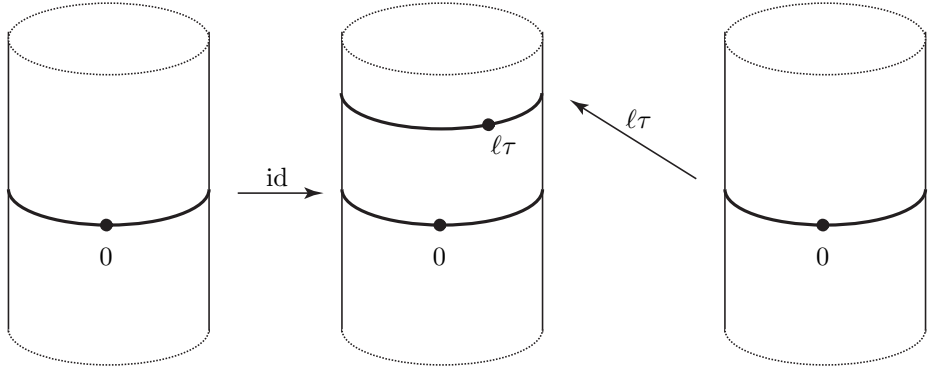}}
 \put(-165,80)	{$\ell\tau$}
\put(-42,38)	{$0$}
\put(-187,38)	{$0$}
\put(-313,38)	{$0$}
\put(-114,90)	{$\ell\tau$}
 \put(-250,68)	{$\id$}
  \caption{{\bf The object $C_{\ell,\tau}\in \EB{2}_1(K_\ell,K_\ell)$}}
\label{fig:C}
\end{center}
\end{figure}

\medskip

\noindent{\bf The left cylinder $L_{\ell,\tau}\in \EB{2}_1(K_\ell\amalg K_\ell,\emptyset)$}:
\[
L_{\ell,\tau}=\left(\xymatrix@1{\emptyset\ar[rr]
&&\ell \Z\backslash\E^2&&\ell \Z\backslash\E^2\,\amalg \,\ell \Z\backslash\E^2 \ar[ll]_<>(.5){\ell\tau\amalg-I}}\right),
\]
where $-I=\left(\begin{smallmatrix} -1&0\\0&-1\end{smallmatrix}\right)\in SO(2)\subset \Iso(\E^2)$. The terminology `left' is motivated by reading bordisms from right to left, \ie drawing the domain of the bordism on the right side, and its range on the left, as in figure \ref{fig:bordism}.

\medskip
\noindent{\bf The right cylinder $R_{\ell,\tau}\in \EB{2}_1(\emptyset,K_\ell\amalg K_\ell)$}.  
\[
R_{\ell,\tau}=\left(\xymatrix@1{\ell \Z\backslash\E^2\, \amalg\, \ell \Z\backslash\E^2\supset W_1\ar[rrr]^<>(.5){\ell\tau\circ(-I)\amalg I}
&&&\R^2/\ell \Z&&\emptyset\ar[ll]}\right),
\]
\medskip
Here, using the terminology of Definition \ref{def:RB},  $W_1$ is an open neighborhood of 
\[
Y_1^c=\ell \Z\backslash\E^1\,\amalg\,\ell \Z\backslash\E^1 \quad\subset\quad \ell \Z\backslash\E^2\,\amalg \,\ell \Z\backslash\E^2=Y_1
\]
 which needs to be chosen carefully in order to satisfy conditions ($+$) and (c). The following choice works: 
\[
W_1:= \ell \Z\backslash(\R\times(-\infty,\epsilon))\,\amalg \,\ell \Z\backslash(\R\times(-\infty,\epsilon))
\quad\subset\quad
\ell \Z\backslash\E^2\,\amalg\, \ell \Z\backslash\E^2=Y_1,
\]
where $\epsilon$ is any real number with $0<\epsilon<\im(\tau)/2$. Note that in particular we need $\im(\tau)>0$ for $R_{\ell,\tau}$ to exists, whereas $L_{\ell,\tau}, C_{\ell,\tau}$ make sense also for $\im(\tau)=0$. 
 
\begin{prop}\label{prop:relations} The following isomorphisms hold in the category $\EB{2}_1$:
\begin{equation}\label{eq:symmetric}
L_{\ell,\tau}\circ \sigma\cong L_{\ell,\tau}
\qquad
\sigma\circ R_{\ell,\tau}\cong R_{\ell,\tau}
\end{equation}
Here $\sigma\colon K_\ell\amalg K_\ell\to K_\ell\amalg K_\ell$ is the isometry switching the two circles, \ie the braiding isomorphism in the symmetric monoidal groupoid $\EB{2}_0$.
\begin{equation}\label{eq:modular}
T_{\ell,\tau}\cong T_{\ell',\tau'}
\qquad\text{if $\ell'=\ell|c\tau+d|$, $\tau'=\frac{a\tau+b}{c\tau+d}\quad$ for some
$\left(\begin{smallmatrix}
a&b\\
c&d
\end{smallmatrix}
\right)\in SL_2(\Z)$}
\end{equation}
\begin{align}
\label{eq:additive}\left(\xymatrix{
\emptyset\ar[rr]^<>(.5){R_{\ell,\tau_1}\amalg R_{\ell,\tau_2}}&&
K_\ell\amalg K_\ell\amalg K_\ell\amalg K_\ell
\ar[rr]^<>(.5){\id\amalg L_{\ell,\tau_3}\amalg\id}&&
K_\ell\amalg K_\ell}
\right)
&\cong
R_{\ell,\tau_1+\tau_2+\tau_3}\\
\label{eq:relationC}\left(
\xymatrix{
\emptyset\amalg K_\ell\ar[rr]^<>(.5){R_{\ell,\tau_1}\amalg\id}&&
K_\ell\amalg K_\ell\amalg K_\ell
\ar[rr]^<>(.5){\id\amalg L_{\ell,\tau_2}}&&
K_\ell\amalg \emptyset=K_\ell}
\right)
&\cong
C_{\ell,\tau_1+\tau_2}\\
\label{eq:relationL}\left(\xymatrix{
K_\ell\amalg K_\ell\ar[rr]^<>(.5){C_{\ell,\tau_1}\amalg \id}&&
K_\ell\amalg K_\ell
\ar[rr]^<>(.5){L_{\ell,\tau_2}}&&
\emptyset}
\right)
&\cong
L_{\ell,\tau_1+\tau_2}\\
\label{eq:relationT}\left(\xymatrix{
\emptyset\ar[rr]^<>(.5){R_{\ell,\tau_1}}&&
K_\ell\amalg K_\ell
\ar[rr]^<>(.5){ L_{\ell,\tau_2}}&&
\emptyset}
\right)
&\cong
T_{\ell,\tau_1+\tau_2}
\end{align}
In the last 4 lines the parameter $\tau$ can be replaced by $\tau+1$, \ie it really lies in $\Z\backslash\fh$.
\end{prop}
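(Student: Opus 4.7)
Every object in the proposition is constructed as a quotient of (an open subset of) $\E^2$ by a subgroup of translations, and every structural map is induced by an element of $\Iso(\E^2) = \E^2 \rtimes O(2)$. My plan is to prove each isomorphism by exhibiting an explicit isometry of $\E^2$ that descends to the relevant quotient and whose triple with appropriately chosen boundary-isometries satisfies the commuting-square condition of Definition~\ref{def:RB}. Throughout, germs of isometries should be taken small enough that the collar conditions $(+)$ and $(c)$ hold and the composed manifolds are Hausdorff (Lemma~\ref{lem:Hausdorff}).

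\textbf{Symmetries \eqref{eq:symmetric}.} For $L_{\ell,\tau}\circ\sigma\cong L_{\ell,\tau}$, the isometry $F\colon\E^2\to\E^2$, $F(z)=\ell\tau-z$ (rotation by $\pi$ composed with translation by $\ell\tau$) commutes with the $\ell\Z$-action and so descends to $\ell\Z\backslash\E^2$. By direct computation $F\circ(\ell\tau)=(-I)$ and $F\circ(-I)=\ell\tau$ as elements of $\Iso(\E^2)$ modulo $\ell\Z$, so taking $f_0$ to be the identity on each $K_\ell$ factor gives the required morphism in $\EB{2}_1$. The case $\sigma\circ R_{\ell,\tau}\cong R_{\ell,\tau}$ is identical with the roles of source and target reversed.

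\textbf{Modular invariance \eqref{eq:modular}.} The torus $T_{\ell,\tau}=\ell(\Z\tau+\Z)\backslash\E^2$ depends only on the lattice $\Lambda=\ell(\Z\tau+\Z)$. For $A=\left(\begin{smallmatrix}a&b\\c&d\end{smallmatrix}\right)\in SL_2(\Z)$, the pair $\{\ell(a\tau+b),\,\ell(c\tau+d)\}$ is an alternative ordered basis of $\Lambda$. The rotation $F(z)=\overline{(c\tau+d)}\,z/|c\tau+d|$ is an isometry of $\E^2$; a one-line calculation shows it sends the basis $\{\ell'\tau',\,\ell'\}$ (with $\ell'=\ell|c\tau+d|$ and $\tau'=(a\tau+b)/(c\tau+d)$) onto $\{\ell(a\tau+b),\,\ell(c\tau+d)\}$, hence conjugates the two lattices and descends to the required isometry $T_{\ell',\tau'}\cong T_{\ell,\tau}$.

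\textbf{Gluing composition relations \eqref{eq:additive}--\eqref{eq:relationT}.} These all reduce to an explicit application of the composition functor $c$ of Definition~\ref{def:RB}. For \eqref{eq:relationC}, composing $R_{\ell,\tau_1}$ with $L_{\ell,\tau_2}$ glues two copies of $\ell\Z\backslash\E^2$ along an open strip transverse to the core; the translation $z\mapsto z+\ell\tau_1$ on the first copy identifies the glued manifold isometrically with a single $\ell\Z\backslash\E^2$ in which the source core sits at height $0$ and the target core at height $\ell(\tau_1+\tau_2)$, realising $C_{\ell,\tau_1+\tau_2}$. Relations \eqref{eq:relationL} and \eqref{eq:relationT} are entirely analogous — one glues the open half of the cylinder $C_{\ell,\tau_1}$ into $L_{\ell,\tau_2}$ (resp.\ $R_{\ell,\tau_1}$ into $L_{\ell,\tau_2}$) and recognises the result by the same translation. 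For \eqref{eq:additive} one performs a ``two-to-two'' gluing of the middle factor $L_{\ell,\tau_3}$ into $R_{\ell,\tau_1}\amalg R_{\ell,\tau_2}$: translating the three pieces by $0$, $\ell\tau_1$, and $\ell(\tau_1+\tau_3)$ respectively stacks them vertically into a single $\ell\Z\backslash\E^2$ with the two target cores at heights $\ell(\tau_1+\tau_2+\tau_3)$ and $0$, which is $R_{\ell,\tau_1+\tau_2+\tau_3}$. The well-definedness of $\tau$ only modulo $\Z$ in the bottom four relations is because $z\mapsto z+\ell$ is the identity on $\ell\Z\backslash\E^2$, so replacing $\tau_j$ by $\tau_j+1$ yields an isomorphic bordism.

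\textbf{Main obstacle.} The genuine work is bookkeeping rather than geometry: in each gluing one must shrink the collar neighborhoods $W_1, W_1'$ so that the intersection strip is small enough to keep the glued manifold Hausdorff (Remark~\ref{rem:Hausdorff}), verify conditions $(+)$ and $(c)$ for the glued embeddings $i_j$, and finally present the isomorphism as an equivalence class of germs. For the four-summand identity \eqref{eq:additive} in particular, keeping track of which half-plane ($Y^+$ or $Y^-$) of each circle survives under the gluing, and of the $-I$ orientation twist built into $L_{\ell,\tau}$ and $R_{\ell,\tau}$, is the most error-prone step.
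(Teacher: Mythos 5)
The paper itself contains no proof of Proposition~\ref{prop:relations} (it is stated and used, with details deferred to the references in preparation), so there is nothing to compare against; judging your proposal on its own terms, the overall strategy is certainly the intended one: every object here is a quotient of $\E^2$ by translations, every structure map comes from an element of $\Iso(\E^2)$, and each relation should be proved by writing down an explicit element of $\Iso(\E^2)$ descending to the quotients and checking the germ/collar conditions of Definition~\ref{def:RB}. Your treatment of \eqref{eq:symmetric} is correct as written. For \eqref{eq:modular} your rotation does the job, but in the direction opposite to what you claim: multiplication by $\overline{(c\tau+d)}/|c\tau+d|$ sends $\ell(a\tau+b)\mapsto \ell'\tau'$ and $\ell(c\tau+d)\mapsto \ell'$, hence carries $\ell(\Z\tau+\Z)$ onto $\ell'(\Z\tau'+\Z)$ (the map you describe, sending $\{\ell'\tau',\ell'\}$ to $\{\ell(a\tau+b),\ell(c\tau+d)\}$, is multiplication by $(c\tau+d)/|c\tau+d|$). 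Since an isometry either way suffices, this is only a mislabeling.

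The genuine weak point is in \eqref{eq:additive}--\eqref{eq:relationT}, and it is exactly the $-I$ bookkeeping you postpone: as stated, your identifications are wrong and would not produce the asserted bordisms. Take \eqref{eq:relationC}: the gluing-compatible transition on the strip is translation by $\ell\tau_2$ (not $\ell\tau_1$), and if you identify the glued surface with $\ell\Z\backslash\E^2$ by pure translations you obtain a cylinder whose two collars enter via $-I$ and via $\ell(\tau_1+\tau_2)\circ(-I)$ -- in your language, the ``upside-down'' cylinder with the source at height $0$ entering by a reflected map and the target at height $\ell\,\Im(\tau_1+\tau_2)$. This is \emph{not} literally $C_{\ell,\tau_1+\tau_2}$, whose target enters by the identity at height $0$ and whose source enters by translation by $\ell(\tau_1+\tau_2)$; and a pure-translation picture with source entering by the identity at height $0$ and target by translation at height $\ell\,\Im(\tau_1+\tau_2)$ would be genuinely non-isomorphic to $C_{\ell,\tau_1+\tau_2}$ in $\EB{2}_1$, because automorphisms of $K_\ell$ preserve the core and the $\pm$ sides and hence the height direction. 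The correct final step is the point reflection: map the $L_{\ell,\tau_2}$-copy by $w\mapsto \ell(\tau_1+\tau_2)-w$ and the $R_{\ell,\tau_1}$-copy by $z\mapsto \ell\tau_1-z$; then the source enters by translation by $\ell(\tau_1+\tau_2)$ and the target by the identity, which is $C_{\ell,\tau_1+\tau_2}$ on the nose. Similarly, in \eqref{eq:additive} your constants $0,\ \ell\tau_1,\ \ell(\tau_1+\tau_3)$ are incompatible with the two gluing transitions (which are $+\ell\tau_3$ between the $R_{\ell,\tau_1}$-copy and the $L_{\ell,\tau_3}$-copy, and $-\ell\tau_2$ between the $R_{\ell,\tau_2}$-copy and the $L_{\ell,\tau_3}$-copy); the compatible choice is $0,\ \ell\tau_2,\ \ell(\tau_2+\tau_3)$ on the $R_{\ell,\tau_2}$-, $L_{\ell,\tau_3}$- and $R_{\ell,\tau_1}$-pieces, after which the circle entering by the identity sits at height $0$ and the other enters by $\ell(\tau_1+\tau_2+\tau_3)\circ(-I)$, which is exactly $R_{\ell,\tau_1+\tau_2+\tau_3}$. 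So the approach is sound and the statements are all attainable this way, but the composition relations need to be reverified with the $-I$ twists carried through; as written, those middle steps fail.
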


These relations show in particular that $T_{\ell,\tau}$, $C_{\ell,\tau}$ and $L_{\ell,\tau}$ can be expressed in terms of $R_{\ell,\tau}$ and $L_{\ell,0}$. In fact {\em every} bordism (\ie  object of $\EB{2}_1$) can be expressed in terms of these. Still, a precise formulation of a `presentation' of $\EB{2}$ is tricky due to the dependence on the scale parameter. It becomes easier when we pass to the conformal Euclidean bordism category $\CEB{2}$, where objects become independent of the scale parameter $\ell$ (and hence we drop the subscript $\ell$).

A $2$\nb-dimensional conformal Euclidean field theory $E\colon \CEB{2}\to \TV$ determines the following data:
\begin{enumerate}[(1)]
\item a topological vector space $V=E(K)$ with a smooth action of $S^1$; 
\item a smooth map $\rho\colon \Z\backslash\fh\to V\otimes V$, $\tau\mapsto E(R_\tau)$;\item a continuous linear map $\lambda=E(L_0)\colon V\otimes V\to \C$
\end{enumerate}
These are subject to the following conditions
\begin{enumerate}[(a)]
\item $\lambda$ and $\rho(\tau)$ are symmetric;
\item the map 
\[
\xymatrix@1{V\otimes V\otimes V\otimes V\ar[rr]^{\id\otimes\lambda\otimes\id}&& V\otimes V\otimes V\otimes V}
\] 
sends $\rho(\tau_1)\otimes \rho(\tau_2)$ to $\rho(\tau_1+\tau_2)$;
\item The action map $\Z\backslash\R\times V\to V$ and the map $\Z\backslash\fh\times V\to V$, $(\tau,v)\mapsto (\gamma(\tau))(v)$ fit together to give a smooth map $\Z\backslash\bar\fh\times V\to V$.  Here $\gamma(\tau)$ is the composition 
\begin{equation}\label{eq:Atau}
\xymatrix@1{
V\cong \C\otimes V\ar[rr]^<>(.5){\rho(\tau)\otimes\id}&&V\otimes V\otimes V\ar[rr]^<>(.5){\id\otimes \lambda}&&V\otimes \C\cong  V
}
\end{equation}
\item the function $\fh\to\C$, $\tau\mapsto \lambda(\rho(\tau))$ is $SL_2(\Z)$\nb-equivariant.
\end{enumerate}
Concerning (1), we note that the automorphism group of $K\in \CEB{2}_1$ is $S^1$ (which acts by rotation); by functoriality, $E(K)$ then inherits an $S^1$\nb-action. The relations (a) are immediate consequences of the relations \ref{eq:symmetric}. Condition  (b) is a consequence of relation \ref{eq:additive}, and condition (d) is a consequence of relation \ref{eq:modular}. Concerning (c) we note that relation \ref{eq:relationC} implies that $\gamma(\tau)=E(C_\tau)$ for $\tau\in \fh$. Unlike the right cylinders $R_\tau$, the cylinders $C_\tau$ are defined for $\tau$ in the {\em closed} upper half plane $\bar\fh$. For $\tau\in\R$, the cylinder $C_\tau\in \CEB{2}_1$ is the image of `rotation by $\tau$' under the canonical map from the endomorphisms of $K$ in $\CEB{2}_0$ to the objects of $\CEB{2}_1(K,K)$. It follows that $\gamma(\tau)$ for $\tau\in \R$ is given by the rotation action on $V$ mentioned in (1). The $C_\tau$'s for $\tau\in \bar\fh$ fit together to form a  smooth family of cylinders parametrized by $\bar\fh$. Applying the functor $E$, we see that 
\begin{equation}\label{eq:Cfamily}
\gamma\colon \bar\fh\times V\ra V
\end{equation}
is a smooth map.

The reader might wonder what we mean here by saying that $\gamma$ is `smooth' since $\bar\fh$ is a manifold with boundary. For this we require that for every smooth map $f\colon S\to \bar\fh\subset\R^2$ (where $S\in \Man$ is a manifold without boundary) the composition with $f$ is smooth. In other words, we are thinking of manifolds with boundary as presheaves on $\Man$. It is a formality to extend a category fibered over $\sS$ to a category fibered over the category of presheaves on $\sS$. Similarly, a morphism of categories fibered over $\sS$ extends canonically to a morphism of these larger categories fibered over the presheaves on $\sS$.

\begin{prop}[\cite{ST5}]\label{prop:realizing_EFT} Given a triple $(V, \lambda, \rho)$ as in (1)-(3) above, satisfying conditions (a)-(d), there is a unique conformal Euclidean field theory $E$ of dimension $2$ which realizes the triple in the sense that $E(K,L_0,R_\tau)=(V, \lambda, \rho(\tau))$.
\end{prop}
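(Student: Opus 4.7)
The plan is to establish uniqueness first and then existence, with the family/fibered smoothness handled at the end using the presheaf-extension device indicated just before the proposition. For uniqueness, every object of $\CEB{2}_0$ is isomorphic to a disjoint union $K^{\amalg n}$, so symmetric monoidality forces $E(K^{\amalg n}) \cong V^{\otimes n}$, and the only automorphisms are generated by symmetric braidings and the $S^1$-rotations, both of which are fixed by the given data. For morphisms in $\CEB{2}_1$, the classification of connected flat oriented surfaces via Gauss--Bonnet (every such surface with $m+n$ boundary circles must have $\chi = 2 - 2g - (m+n) = 0$, so either a cylinder or a torus) shows that every connected bordism is isomorphic to $C_\tau$, $L_\tau$, $R_\tau$, or $T_\tau$ for some $\tau$; relations \eqref{eq:relationC}--\eqref{eq:relationT} of Proposition \ref{prop:relations} then express $C_\tau$, $L_\tau$, $T_\tau$ as compositions of $R_{\tau_1}$'s and $L_0$'s (together with permutations), so $E$ is fully determined.

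For existence, I would define $E$ on objects by $E(K^{\amalg n}) := V^{\otimes n}$, on morphisms of $\CEB{2}_0$ via the symmetric braidings in $\TV$ and the given $S^1$-action, and on generating bordisms by $E(R_\tau) := \rho(\tau)$, $E(L_0) := \lambda$, $E(C_\tau) := \gamma(\tau)$ (extended smoothly to $\tau \in \bar\fh$ by condition (c)), with $E(L_\tau)$ and $E(T_\tau)$ defined to be the compositions on the right-hand sides of \eqref{eq:relationL} and \eqref{eq:relationT}. Arbitrary bordisms are assembled by monoidality. The content then is to verify that this assignment respects all the relations of Proposition \ref{prop:relations}: \eqref{eq:symmetric} is condition (a); \eqref{eq:additive} is condition (b); \eqref{eq:relationC} holds by decomposing $C_{\tau_1+\tau_2}$ via (b) and the definition of $\gamma$; \eqref{eq:modular} is condition (d), used after identifying $Z_E(1,\tau) = \lambda(\rho(\tau))$ via \eqref{eq:relationT}; associativity, unit, and coherence axioms reduce to routine diagram chases once the generating data is fixed. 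The flip/spin compatibility is automatic since $\CEB{2}$ has trivial flip.

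The main obstacle is the family version: one must show that every smooth $S$-family of bordisms admits, locally in $S$, a smoothly varying decomposition into the generators $R_\tau$, $L_0$, $C_\tau$, $T_\tau$ and permutations, and that the transitions between local decompositions are controlled exactly by the relations of Proposition \ref{prop:relations} applied pointwise. The delicate locus is where the combinatorial type of the decomposition changes, in particular where a cutting geodesic of an $R_\tau$ degenerates ($\im\tau \to 0$); this is precisely where the extension $\gamma \colon \bar\fh \times V \to V$ from condition (c) is used, interpreting $\bar\fh$ as a presheaf on $\Man$ so that smoothness is tested against arbitrary smooth maps $S \to \bar\fh$. Once smooth local normal forms are in place, the checks reduce to the same relations (a)--(d), now read in families, which hold by hypothesis. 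A clean way to package everything is to first exhibit $\CEB{2}$ as the free symmetric monoidal internal category (in $\Sym(\Cat^\fl/\Man)$) generated by the object $K$, the smooth family of morphisms $R \colon \Z\backslash\fh \to \CEB{2}_1(\emptyset, K \amalg K)$, the morphism $L_0$, and the $S^1$-action, modulo the relations \eqref{eq:symmetric}--\eqref{eq:modular}; then the proposition becomes the universal property of this presentation, and conditions (a)--(d) are exactly the restatement of those relations on the data $(V, \lambda, \rho)$.
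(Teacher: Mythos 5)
Your proposal follows essentially the same route the paper outlines: classify objects with connected core as copies of $K$ and connected bordisms as $C_\tau$, $L_\tau$, $R_\tau$, $T_\tau$, reconstruct $E(C_\tau)$ and $E(L_\tau)$ from $\lambda$ and $\rho$ via the relations of Proposition~\ref{prop:relations}, extend by monoidality, and then address the family version (which the paper defers to \cite{ST5}, noting only that it is ``basically the argument in the non-family version''). Your added remarks (Gauss--Bonnet for the classification, the presentation/universal-property packaging, and the role of condition (c) at the degenerate locus $\im\tau\to 0$) are consistent refinements of that same outline rather than a different argument.
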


The proof -- which we won't give in this paper -- is based on two facts:
\begin{enumerate}
\item every object $(Y,Y^c,Y^\pm)\in\CEB{2}_0$ with connected core $Y^c$ is equivalent to $K$.
\item every object $(\Sigma,i_0,i_1)\in\CEB{2}_1$ with connected core is isomorphic to $C_\tau, L_\tau, R_\tau$ or $T_\tau$ for some $\tau\in \Z\backslash\fh$.
\end{enumerate}
The relations of Proposition \ref{prop:relations} then imply that $E(C_\tau)$ and $E(L_\tau)$ can be reconstructed from the data $\lambda$ and $\rho$. Using the monoidal structure, the functor $E$ is then determined. This is basically the argument in the non-family version.

 \begin{proof}[Proof of Proposition \ref{prop:partition_2EFT}] 
 Let $f\colon \fh\to \C$ be a $SL_2(\Z)$\nb-invariant holomorphic function, meromorphic at infinity with $q$\nb-expansion $f(\tau)=\sum_{k=-N}^\infty a_kq^k$.  We will use Proposition \ref{prop:realizing_EFT} to construct $E\in \EFT{2}$ whose partition function is $f$. We construct the data $V$, $\lambda$, $\rho$ as follows:
 \begin{itemize}
\item $V$ is a completion, described below in Remark \ref{rem:completion}, of the algebraic direct sum 
\[
\bigoplus_{k=-N}^\infty V_k=\{(v_k)_{k=-N}^\infty\mid v_k\in V_k\}
\]
 Here $V_k=\C^{a_k}$, and the group $S^1$ acts on $V_k$ by letting $q\in S^1$ act by scalar multiplication by $q^k$. 
\item $\lambda\colon V\otimes V\to \C$, $(v_k)\otimes(w_k)\mapsto \sum_{k=-N}^\infty\bra{\bar v_k,w_k}$, where $\bra{\bar v_k,w_k}$ is the Hermitian inner product of $\bar v_k$, the complex conjugate of $v_k$, and $w_k$ (this depends $\C$\nb-linearly on $v_k$ and $w_k$).
\item $\rho\colon \fh\to V\otimes V$, $\tau\mapsto \sum_{k=-N}^\infty \sum_{i=1}^{a_k}q^ke_i\otimes e_i$, where $\{e_i\}$ is the standard basis of $\C^{a_k}$.
\end{itemize}
It is straightforward to check that conditions (a)-(d) are satisfied, and hence there is a $2$\nb-dimensional conformal Euclidean field theory $E$ that realizes these data. Let us calculate the partition function of $E$:
\[
Z_E(\tau)=E(T_\tau)=E(L_0\circ R_\tau)=E(L_0)\circ E(R_\tau)=\lambda(\rho)=\sum_{k=-N}^\infty a_kq^k=f(\tau)
\]
\begin{rem}\label{rem:completion}
It is natural to complete the algebraic direct sum $\bigoplus_{k=-N}^\infty V_k$ to the Hilbert space direct sum 
\[
H=\{v=(v_k)\in \prod_k V_k\mid ||v||:=\sum_k |v_k|^2<\infty\}
\]
Unfortunately if we do this, the action map $S^1\times H\to H$ is not smooth: its derivative at $\tau=0$ is the operator $N\colon H\to H$ which is multiplication by $2\pi ik$ on $V_k$. In particular, the eigenvalues of $N$ are unbounded (except if $f$ is a Laurent polynomial which only happens if $f$ is a constant modular function), and hence $N$ is not continuous.  At first glance it seems that this is a problem no matter which topology on $V$ we pick: if the eigenvalues of $N$ are unbounded, we expect that $N$ is not continuous. However, this is not the case if we leave the world of Banach spaces behind: \eg the operator $\frac{d}{d\theta}$ acting on $C^\infty(S^1)$ equipped with the Fr\'echet topology is continuous despite its eigenvalues being unbounded.  

This example suggests to define 
\[
V:=\left\{v=(v_k)\in\prod_k V_k\mid ||v||_n:=\sum_k |v_k|^2k^{2n}<\infty \,\forall n\in \N\right\}
\]
equipped with the Fr\'echet topology determined by the semi-norms $||\quad||_n$.  It is clear that $N$ is continuous in this topology and it can be shown that the action map $\bar\fh\times V\to V$ is smooth using this topology on $V$. 
\end{rem}
\end{proof}

\begin{rem}\label{rem:sheaves}
Unlike infinite dimensional separable Hilbert spaces, there are many non-isomorphic topological vector spaces constructed this way (for a complete isomorphism classification of these spaces see \cite[Prop.\ 29.1]{MV}). For a $\EFT{2}$ $E$ the  isomorphism class of the topological vector space $E(S^1_\ell)$ is an invariant of $E$, but we don't want it to be an invariant for the {\em concordance class of $E$}. This forces us to consider {\em sheaves} rather than locally trivial bundles of topological vector spaces as the appropriate notion of `family of topological vector spaces' (see Definition \ref{def:famTV}).
\end{rem}
\subsection{Partition functions of spin $\EFT{2}$'s}
The goal of this subsection is to show that every integral holomorphic modular function is the partition function of a {\em spin} $\EFT{2}$. Our interest in spin EFT's comes from 
the fact that they are closely related to $\EFT{2|1}$'s: any $\EFT{2|1}$ $E$ determines a reduced  spin $\EFT{2}$ $\bar E$, and the partition function of $E$ by definition is the partition function of $\bar E$. 

\begin{prop}\label{prop:partition_spin_2EFT} Every integral holomorphic modular function is the partition function of a $\CEFT{2|0}$.
\end{prop}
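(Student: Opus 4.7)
The plan is to mimic the proof of Proposition~\ref{prop:partition_2EFT}, but to replace topological vector spaces by $\Z/2$-graded (super) topological vector spaces so that the partition function becomes a supertrace rather than an ordinary trace. Negative Fourier coefficients of $f$ will then be realized by basis vectors placed in odd degree.

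First I would establish a spin analog of Proposition~\ref{prop:realizing_EFT}: a presentation of $\CEB{2|0}$ in terms of spin circles $K^{\pm}_\ell$ (non-bounding/bounding) together with spin cylinders $C^{\pm}_{\ell,\tau}$, $R^{\pm}_{\ell,\tau}$, $L^{\pm}_{\ell,\tau}$ and spin tori $T^{\epsilon_1\epsilon_2}_{\ell,\tau}$, with the key identity $T^{++}_{\ell,\tau} \cong L^+_{\ell,0} \circ R^+_{\ell,\tau}$ analogous to \eqref{eq:relationT}. A spin CEFT $E \in \CEFT{2|0}$ restricted to this data amounts to a super topological vector space $V = V^{ev}\oplus V^{odd}$ with smooth $S^1$-action, whose grading involution equals the spin flip (cf.\ Section~\ref{subsec:cat_flip}), together with a continuous linear map $\lambda \colon V \otimes V \to \C$ and a smooth family $\rho \colon \Z\backslash\fh \to V\otimes V$ satisfying the super-versions of conditions (a)--(d) from Section~\ref{subsec:generators}, with all tensor swaps governed by Koszul signs. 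The point is that the spin flip on $K^+_\ell$ is the rotation by $\pi$, which in the quantized picture is $(-1)^F$; matching it to the grading involution forces the pairing $\lambda \circ \rho$ to compute a supertrace.

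Given an integral holomorphic modular function $f(\tau) = \sum_{k=-N}^\infty a_k q^k$ with $a_k \in \Z$, I write $a_k = b_k - c_k$ with $b_k, c_k \in \Z_{\geq 0}$, and set
\[
V_k := \C^{b_k} \oplus \Pi\,\C^{c_k},
\]
with $\Pi$ denoting parity reversal, and let $S^1$ act on $V_k$ by the character $q \mapsto q^k$. Take $V$ to be the Fr\'echet completion of $\bigoplus_{k \geq -N} V_k$ as in Remark~\ref{rem:completion}. Define $\lambda$ and $\rho$ by the super-analogs of the formulas in the proof of Proposition~\ref{prop:partition_2EFT}, with $\lambda((v_k)\otimes(w_k)) = \sum_k \bra{\bar v_k,w_k}$ now denoting a super-Hermitian pairing and $\rho(\tau) = \sum_k \sum_i q^k\, e_i \otimes e_i$ taken over a homogeneous basis, with the Koszul sign that makes $\lambda \circ (\id \otimes \gamma(\tau) \otimes \id) \circ \rho$ produce the supertrace.

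The super-analogs of conditions (a)--(d) follow exactly as in the ungraded case, so the spin presentation result delivers a theory $E \in \CEFT{2|0}$. The partition function on the non-bounding torus is then
\[
Z^{++}_E(\tau) \;=\; E(L^+_0 \circ R^+_\tau) \;=\; \lambda(\rho(\tau)) \;=\; \str\bigl(\gamma(\tau)\bigm| V\bigr) \;=\; \sum_{k=-N}^\infty (b_k - c_k)\, q^k \;=\; f(\tau),
\]
as required. The main obstacle is the spin bookkeeping of the first step: one must identify the correct spin lift of the involution $-I \in SO(2)$ used in the definitions of $R$ and $L$ in Section~\ref{subsec:generators}, and verify that $L^+_0 \circ R^+_\tau$ indeed produces the non-bounding spin structure on both cycles of the torus (and similarly track the spin structures in relations \eqref{eq:additive}--\eqref{eq:relationT}). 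Once this combinatorial input is in place, the super-linear algebra is a routine adaptation of the ungraded argument, and the forced match between the grading involution and the spin flip is precisely what allows the supertrace—and hence arbitrary integer coefficients—to appear.
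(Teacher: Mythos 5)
Your construction breaks down at the step you dismiss as routine: the spin analog of condition (d), i.e.\ $SL_2(\Z)$\nb-equivariance of the partition function. In the spin setting the partition function is not just the supertrace on $\fh^{++}$; it is a function on all four components $\fh^{s_1s_2}$, equal to a supertrace over $V^{s_2}$ when $s_1=+$ and to an \emph{ordinary} trace over $V^{s_2}$ when $s_1=-$, and $SL_2(\Z)$ permutes the three components $\fh^{+-},\fh^{-+},\fh^{--}$ cyclically while fixing $\fh^{++}$. With your choice $V_k=\C^{b_k}\oplus\Pi\,\C^{c_k}$ (and $V^+=V^-=V$), the theory produces $\sum_k(b_k-c_k)q^k=f$ on $\fh^{++}$ and $\fh^{+-}$, but $\sum_k(b_k+c_k)q^k$ on $\fh^{-+}$ and $\fh^{--}$. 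Equivariance then forces identities such as $f(\tau)=\sum_k(b_k+c_k)e^{-2\pi ik/\tau}$ relating these two genuinely different functions, which fail whenever some $c_k>0$; nor does $\sum_k(b_k+c_k)q^k$ have any modular transformation property in general. So the "spin presentation result" does not deliver a field theory from your data: the odd summands wreck exactly the consistency condition that is the whole content of the spin case.

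This is why the paper's proof takes a different route. There one first realizes modular functions with \emph{non-negative} integral coefficients using a purely even $V$ (so trace $=$ supertrace and $Z$ is literally the same function on all four components, making equivariance automatic), then realizes $-f$ by globally reversing the parity of $V^+$ only, which flips $Z$ on the $SL_2(\Z)$\nb-fixed component $\fh^{++}$ and nothing else, hence preserves equivariance. Finally, an arbitrary integral holomorphic modular function is an integral linear combination of powers of $j=c_4^{3n}/\Delta^n$, whose $q$\nb-expansions have non-negative coefficients (since $\Delta^{-1}$ does), and such combinations are realized by taking direct sums and tensor products of the field theories already constructed. If you want to salvage your direct approach you would have to choose the data $(V^-,\lambda^-,\rho^-)$ independently and arrange a nontrivial compatibility among the trace on $V^+$, the supertrace on $V^-$, and the trace on $V^-$ under the permutation of the three non-fixed components — an arithmetic condition that does not "follow exactly as in the ungraded case."
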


\begin{proof}
We've described the objects $K_\ell$, $T_{\ell,\tau}$, $C_{\ell,\tau}$, etc as quotients of (subsets of) our model space $\sM=\E^2$. These quotients are orbit spaces for a subgroup $H$ of our symmetry group $G=\R^2\rtimes SO(2)$ which acts freely on $\sM$. This subgroup $H$ is contained in the translation group $\R^2\subset G$ and is isomorphic to $\Z^2$ for $T_{\ell,\tau}$, and isomorphic to $\Z$ in the other cases. 

We recall that in the case of Euclidean spin-structures, the model space is still $\E^2$, but the symmetry group now is $\E^2\rtimes Spin(2)$, which acts on $\E^2$ via the double covering map $\E^2\rtimes Spin(2)\to \R^2\rtimes SO(2)$. In particular, the action of the symmetry group on the model space is no longer effective which makes understanding this structure more challenging. For the case at hand, it is helpful to think of the group $\E^2\rtimes Spin(2)$ as acting not only on $\E^2$, but also compatibly on the spinor bundle of $\E^2$. This action is effective: the element $-1\in Spin(2)$ in the kernel of $Spin(2)\to SO(2)$ acts trivially on $\R^2$, but by multiplication by $-1$ on its spinor bundle. 

To specify the spin structure on $\Z\backslash\E^2$, we need to pick a lift of the generator of $\Z\subset\E^2\subset \E^2\rtimes SO(2)$ to $\E^2\rtimes Spin(2)$; in other words, we need to pick an element of $\{\pm 1\}\subset Spin(2)$. The choice $+1$ is usually referred to as `periodic spin structure', since sections of the spinor bundle of the quotient can be interpreted as $\C$\nb-valued functions on $\E^2$ which are periodic (\ie invariant) w.r.t.\ the $\Z$\nb-action. The choice $\{-1\}$ is called `anti-periodic spin structure' since spinors can be identified with functions which are anti-periodic, \ie the generator acts by multiplication by $-1$. For a quotient of the form $\Z^2\backslash\E^2$, we need to specify an element in $\{\pm 1\}$ for {\em both} generators. We will use superscripts $s\in \{\pm\}$ to specify the spin structure, \eg $K_\ell^s$ or $C_{\ell,\tau}^s$ (and in Section~\ref{sec:twisted} we will use the notation $+,-$ for $p,a$). For the torus $T_{\ell,\tau}=\ell(\Z\tau+\Z)\backslash\E^2$ we specify the spin structure by two superscripts: $T_{\ell,\tau}^{s_1s_2}$ (with $s_1$ corresponding to the first generator $\ell\tau$; $s_2$ corresponds to the second generator $\ell$).

Now the proof proceeds as in the previous case: a conformal Euclidean spin field theory determines the following algebraic data
\begin{itemize}
\item a $\Z/2$\nb-graded topological vector space $V^s:=E(K^{s})$ for both spin structures $s=\pm$ with an action of $\R/\Z$ (for $s=+$) respectively $\R/2\Z$ (for $s=-$);
\item a continuous linear map $\lambda^s=E(L_0^s)\colon V\otimes V\to \C$ for $s=\pm$; 
\item a smooth map $\rho^s\colon \fh\to V^s\otimes V^s$ given by $\tau\mapsto E(R_{\tau}^s)$ for $s=\pm$;
\end{itemize}
which are subject to compatibility conditions. Conversely, if these compatibility conditions are satisfied, then these data are realized by a conformal Euclidean spin field theory. 

We recall that from the discussion of the non-spin case that the most interesting compatibility condition came from the transformation properties of the partition function $Z\colon \fh\to \C$, $\tau\mapsto E(T_{\tau})$. This is the only condition that we will formulate and check in the spin case. The analog of the partition function in the spin case is the function 
\[
Z\colon \coprod_{s_1,s_2}\fh^{s_1s_2}\ra \C
\qquad\text{given by}\qquad
\tau\mapsto E(T^{s_1s_2}_{\tau})\quad\text{for}\quad
\tau\in \fh^{s_1s_2}
\]
Here the superscripts ${s_1,s_2}$ is just a way to distinguish the four copies of $\fh$ corresponding to the four spin structures.

In terms of the data above, $Z$ is determined by 

\[
Z(\tau)=
\begin{cases}
\lambda^s((\alpha\otimes\id)c(\rho^s_\tau))&\tau\in \fh^{+s}\\
\lambda^s(c(\rho^s_\tau))&\tau\in \fh^{-s}\\
\end{cases}
\]
where $c\colon V\otimes V\cong V\otimes V$ is the braiding isomorphism, and $\alpha\colon V\to V$ is the grading involution. 
It can be shown that the isometry $T^{s_1s_2}_{\ell,\tau}\to T^{s'_1,s'_2}_{A(\ell,\tau)}$ for $A\in SL_2(\Z)$ is spin structure preserving \Iff 
\[
A\left(\begin{matrix}
s_1\\
s_2
\end{matrix}
\right)
=\left(\begin{matrix}
s_1'\\
s_2'
\end{matrix}
\right)
\]
Here we think of $\left(\begin{smallmatrix}
s_1\\
s_2
\end{smallmatrix}
\right)
$ as a vector in $(\Z/2)^2$ (i.e., identifying $\Z/2$ and $\{\pm\}$). This implies that the function $Z$ is $SL_2(\Z)$\nb-equivariant, where $SL_2(\Z)$ acts on $\fh$ as in \ref{eq:SL_action} and permutes the four copies of $\fh$ in the obvious way. 

Now let $f\colon \fh\to \C$ be a holomorphic modular function with $q$\nb-expansion $f(\tau)=\sum_{k=-N}^\infty a_kq^k$ with non-negative integral Fourier coefficients $a_k$. To find a spin $\EFT{2}$ with partition function $f$, we need to construct data $V^s$, $\lambda^s$ and  $\rho_\tau^s$ as above  such that $Z(\tau)=f(\tau)$ for $\tau\in\fh^{++}$. Starting with $f$, we construct $V$, $\lambda$, $\rho_\tau$ as in the proof of Proposition \ref{prop:partition_2EFT} such that $\lambda(c(\rho_\tau))=f(\tau)$. We note that 
\[
\lambda((\alpha\otimes\id)c(\rho_\tau))=\lambda(c(\rho_\tau)),
\]
 since $V$ is an even vector space thanks to our assumption that the Fourier coefficients of $f$ are non-negative. Hence defining 
 $V^s:=V$, $\lambda^s:=\lambda$, $\rho^s_\tau:=\rho_\tau$ for $s=\pm$, we deduce that $Z(\tau)=f(\tau)$ for $\tau\in \fh^{s_1s_2}$ for {\em all} spin structures $s_1,s_2$. In particular, the function $Z$ is $SL_2(\Z)$\nb-equivariant. Changing the $\Z/2$\nb-grading on $V^+$, while not changing $V^-$, $\lambda^s$ and $\rho^s$ has the effect that  $Z(\tau)$ changes sign for $\tau\in \fh^{++}$, while it doesn't change for $\tau\in \fh^{s_1s_2}$, $s_1s_2\ne ++$. It follows that $-f$ can also be realized as partition function  of a spin $\EFT{2}$. 
 
 To finish the proof, we recall that the ring of integral weakly holomorphic forms is given by 
 \[
 MF^*=\Z[c_4,c_6,\Delta^{\pm 1}]/(c_4^3-c_6^2-1728\Delta)
 \]
 where 
 \[
 c_4=1+240\sum_{k>0}\sigma_3(k)q^k
 \qquad
 c_6=1-504\sum_{k>0}\sigma_5(k)q^k
 \]
 are modular forms of weight $4$ respectively $6$. In particular, every element in $MF^0$ is an integral linear combination of positive powers of $j:=c_4^{3n}/\Delta^n$. We note that $\Delta^{-1}=q^{-1}\prod_{m=1}^\infty\sum_i q^{mi}$ has non-negative Fourier coefficients and hence so does $j$. By our arguments above it follows that $j$ and $-j$ are the partition functions of spin $\EFT{2}$'s. Forming sums and tensor products of field theories, we see that every linear combination of powers of $j$ is realized as the partition function of a spin $\EFT{2}$.
\end{proof}

\subsection{Modularity of the partition function of $\EFT{2|1}$'s, part I}
\label{subsec:modular1}

The proof of Theorem \ref{thm:modular} (according to which the partition function of a $\EFT{2|1}$ is an weakly holomorphic integral modular form of weight zero) consists of two steps. This section provides the first of these steps, namely to show that if $E$ is a spin $\EFT{2}$ (of degree zero) satisfying an additional condition, then the partition function of $E$ is a weakly holomorphic integral modular form of weight zero (see Proposition \ref{prop:modular}). After defining $\EFT{2|1}$'s in section \ref{sec:susy}, we will define the partition function of a $\EFT{2|1}$ as the partition function of the spin $\EFT{2}$ $E$ it determines. We will call $E$ the {\em associated reduced Euclidean field theory}. If $E\in \EFT{2|0}$ is obtained this way, we say that $E$ has a `supersymmetric extension'. We will show in Proposition \ref{prop:square} that if $E$ has a supersymmetric extension, then it satisfies the condition of Proposition \ref{prop:modular}. This is the second and final step in the proof of Theorem \ref{thm:modular}.

Let $E$ be a  spin $\EFT{2}$, and for $s\in \{\pm\}$ let $V^s:=E(K^s_\ell)$ be the topological vector space associated to $K^s_\ell$ (we recall from section \ref{subsec:generators} that $K^s_\ell$ is the object of $\EB{2|0}_0$ whose core $(K^s_\ell)^c$ is the circle of diameter $\ell$ with spin structure $s$; here $s=+$ corresponds to the periodic, and $s=-$ to the anti-periodic spin structure). Applying the functor $E$ to the cylinder $C^s_{\ell,\tau}$, $\tau\in \bar\fh$, (an object in $\EB{2|0}_1(K^s_\ell,K^s_\ell)$; see section  \ref{subsec:generators}), we obtain an operator (\ie a continuous linear map)
\[
A^s_\tau=A^s(\tau):=E(C^s_{\ell,\tau})\colon V^s\ra V^s
\]
We note that for fixed $\ell$ the map $\bar\fh\times V^s\ra V^s$, $(\tau,v)\mapsto A^s_\tau(v)$ is smooth as discussed above (see \eqref{eq:Cfamily}).

\begin{prop}\label{prop:modular} Let $E$ be a spin EFT of dimension $2$. We assume that for every $\tau\in \fh$ there is an odd operator $B_\tau\colon V\to V$ which commutes with $A^+_\tau$ and satisfies
\begin{equation}\label{eq:condition_susy}
\frac{\p A^+_\tau(v)} {\p\bar\tau}=-B_\tau^2(v)
\qquad\forall v\in V.
\end{equation}
Then the partition function $Z_E^{++}\colon \fh^{++}\to \C$ is a holomorphic modular function with integral Fourier coefficients. 
\end{prop}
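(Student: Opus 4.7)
My starting point would be to re-express the partition function as a supertrace of $A^+_\tau$. Using Proposition~\ref{prop:relations} together with the spin sign conventions developed in the proof of Proposition~\ref{prop:partition_spin_2EFT}, the decomposition $T^{++}_{\ell,\tau}\cong L^+_{\ell,0}\circ R^+_{\ell,\tau}$ and the insertion of the grading involution $\alpha$ forced by the non-bounding periodic-periodic structure give the key identity
\[
Z^{++}_E(\tau)=\str A^+_\tau.
\]
Every subsequent property of $Z^{++}_E$ would be read off from structural properties of this one supertrace.

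Next I would deduce holomorphicity by differentiating under the supertrace and invoking the hypothesis:
\[
\partial_{\bar\tau}Z^{++}_E(\tau)=\str\!\big(\partial_{\bar\tau}A^+_\tau\big)=-\str(B_\tau^2).
\]
Since $B_\tau$ is odd, writing it in block form as $\left(\begin{smallmatrix}0&X\\Y&0\end{smallmatrix}\right)$ gives $B_\tau^2=\left(\begin{smallmatrix}XY&0\\0&YX\end{smallmatrix}\right)$, so $\str(B_\tau^2)=\tr(XY)-\tr(YX)=0$. For modular invariance I would use the fact that among the four spin structures on the torus, the periodic-periodic $++$ structure is the unique $SL_2(\Z)$-fixed point (the three bounding structures form a single orbit). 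Hence $T^{++}_{\ell,\tau}$ is isometric to $T^{++}_{A\cdot(\ell,\tau)}$ for every $A\in SL_2(\Z)$, and functoriality of $E$ yields the $SL_2(\Z)$-invariance of $Z^{++}_E$.

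For integrality and meromorphicity at $i\infty$ I would exploit the smooth $\R/\ell\Z$-action on $V^+$ coming from $t\mapsto A^+_t$ at real $t$ (an honest circle action precisely because the spatial core carries the periodic spin structure). Fourier decomposition yields $V^+=\bigoplus_{n\in\Z}V_n$ with integer weights, and holomorphicity of $\tau\mapsto A^+_\tau$ (just established) forces it to act on each $V_n$ by $q^n$ with $q=e^{2\pi i\tau/\ell}$. Consequently
\[
Z^{++}_E(\tau)=\sum_{n\in\Z}\bigl(\dim V_n^{ev}-\dim V_n^{odd}\bigr)\,q^n=\sum_n a_n q^n
\]
with integer coefficients $a_n$. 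The summability of $\sum_n(\dim V_n)|q|^n$ for $|q|<1$, which comes from trace-class behavior of $A^+_\tau$, forces $\dim V_n=0$ for all sufficiently negative $n$, producing a Laurent $q$-expansion and hence meromorphicity at $i\infty$.

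The hard part will be this last step: justifying the weight-space decomposition and the pointwise formula $A^+_\tau|_{V_n}=q^n\,\id$ in the topological vector space setting, since by Remark~\ref{rem:completion} $V^+$ must be modelled on a Fréchet rather than a Hilbert space and standard spectral theorems do not apply directly. One also has to promote trace-class behavior to finite-dimensionality of weight spaces and to a genuine lower bound on the spectrum. A subsidiary point is the scale parameter $\ell$: the statement is phrased with $Z^{++}_E$ as a function of $\tau$ alone, so one has to either adduce an analogous supersymmetry in the $\ell$-direction or reduce to $\ell$-independence via Remark~\ref{rem:ell_independence} once holomorphicity and $SL_2(\Z)$-invariance are in hand.
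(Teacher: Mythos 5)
Your outline has the same skeleton as the paper's argument (supertrace of $A^+_\tau$, supersymmetric cancellation, integer rotation weights, trace-class behaviour bounding the spectrum below, the $\ell$-issue), but the two analytic steps you pass over quickly are exactly where the proof has to do its real work, and as written they contain genuine gaps. Take the holomorphicity step: you differentiate under the supertrace and claim $\str(B_\tau^2)=\tr(XY)-\tr(YX)=0$. Here $V$ is an infinite-dimensional topological vector space of Fr\'echet type; nothing is assumed about $B_\tau$ beyond oddness and $B_\tau^2=-\p A^+_\tau/\p\bar\tau$, so its blocks $X,Y$ carry no trace-class control, and the trace of a (super)commutator of non-trace-class operators need not vanish (already on Hilbert space, $[S^*,S]$ for the unilateral shift $S$ is rank one with trace $1$). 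Likewise the interchange of $\p/\p\bar\tau$ with the supertrace, and even the meaning of $\str A^+_\tau$, need justification: $V$ is not the direct sum of eigenspaces, and for nuclear operators on non-Hilbert spaces there is no Lidskii-type theorem for free. The paper's proof avoids all of this: it first shows $A^s_\tau$ is nuclear, hence compact, by exhibiting $C^s_{\ell,\tau}$ as a \emph{thick} morphism -- it factors through $R^s_{\ell,\tau}$ and $L^s_{\ell,0}$ as in \eqref{eq:relationC} -- and invoking the characterization of thick morphisms in $\TV$ from \cite{ST2}; it then decomposes $V$ into \emph{finite-dimensional} simultaneous generalized eigenspaces $V_{a,b}$ of the commuting semigroup $\{A_\tau\}$ with eigenvalue functions $q^a\bar q^b$, shows the off-diagonal components of $\rho_\tau$ vanish, identifies the relevant contributions with $\str\big((A_\tau)|_{V_{a,b}}\big)$ (plus an estimate, quoted from \cite{ST3}, that small eigenvalues do not contribute to a fixed power of $q$), and only then runs your cancellation argument finite-dimensionally: on $V_{a,b}$ with $b\ne 0$ the operator $B^2=-\p A_\tau/\p\bar\tau$ is invertible, so $B$ is an odd isomorphism and $\sdim V_{a,b}=0$.

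Second, your integrality/meromorphy step rests on ``holomorphicity of $\tau\mapsto A^+_\tau$,'' which you have not established (at best you argued holomorphicity of its supertrace) and which is false for a general non-conformal theory: the eigenvalues of $A_\tau$ have the form $q^a\bar q^b$ with $b$ possibly nonzero, and only after the supersymmetric cancellation do the antiholomorphic parts drop out of $Z^{++}_E$. So the claim $A^+_\tau|_{V_n}=q^n\,\id$ is unjustified as stated; what is true, and what the paper uses, is that $C_{\ell,\tau}\cong C_{\ell,\tau+1}$ forces the surviving exponents $a$ to be integers (with $q=e^{2\pi i\tau}$, not $e^{2\pi i\tau/\ell}$), the coefficients are the integers $\sdim V_{a,0}$, and compactness of $A_\tau$ forces $V_{a,0}=0$ for $a\ll 0$. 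Finally, Remark~\ref{rem:ell_independence} does not by itself produce $\ell$-independence, it only converts it into modularity; the paper deduces $\ell$-independence from the integrality just established, since the coefficients depend continuously on $\ell$ and are integers, hence constant in $\ell$, and only then does modularity of $Z^{++}_E$ follow.
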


The proof of this proposition is based on a `supersymmetry cancellation argument' (see proof of Lemma \ref{lem:cancellation} below), an argument that   seems to be fairly standard in the physics literature, at least on the Lie algebra level, \ie for infinitesimal generators of the above families $A_\tau$, $B_\tau$. 

The proof of this result consists of a number of steps that we formulate as lemmas.
\begin{lem} For any spin structure $s\in \{\pm\}$ and $\tau\in \fh$ the operator $A^s_\tau$ is {\em compact}, \ie in the closure of the finite rank operators with the respect to the compact-open topology on the space of  continuous linear maps $V^s\to V^s$.
\end{lem}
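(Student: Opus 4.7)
The plan is to exploit the gluing decomposition of the cylinder to exhibit $A^s_\tau$ as a limit of finite-rank operators. Since $\tau\in\fh$ (so $\im\tau>0$), pick any splitting $\tau=\tau_1+\tau_2$ with $\tau_1,\tau_2\in\fh$, e.g.\ $\tau_1=\tau_2=\tau/2$. By the spin analogue of relation \eqref{eq:relationC} in $\EB{2|0}_1$,
\[
C^s_{\ell,\tau}\;\cong\;(\id_{K^s_\ell}\amalg L^s_{\ell,\tau_2})\circ(R^s_{\ell,\tau_1}\amalg\id_{K^s_\ell}).
\]
This relation holds for the same reason as \eqref{eq:relationC}: the underlying Euclidean gluing is identical, and all three pieces $R^s,L^s,C^s$ are defined over the same spin circle $K^s_\ell$, so the spin structures match automatically. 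Applying the symmetric monoidal functor $E$, one obtains the factorization
\[
A^s_\tau(v)\;=\;(\id_{V^s}\otimes\lambda^s_{\tau_2})\bigl(\rho^s(\tau_1)\otimes v\bigr),
\]
where $\rho^s(\tau_1):=E(R^s_{\ell,\tau_1})(1)\in V^s\hat\otimes V^s$ and $\lambda^s_{\tau_2}:=E(L^s_{\ell,\tau_2})\colon V^s\hat\otimes V^s\to\C$ is a continuous linear map.

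Next, by definition of the projective tensor product, the algebraic tensor product $V^s\otimes_{\mathrm{alg}} V^s$ is dense in the completion $V^s\hat\otimes V^s$, so I can choose a sequence $\xi_n=\sum_{i=1}^{N_n}a_i^{(n)}\otimes b_i^{(n)}$ with $\xi_n\to\rho^s(\tau_1)$. For each $n$ define
\[
A_n\colon V^s\to V^s,\qquad A_n(v)\;:=\;(\id\otimes\lambda^s_{\tau_2})(\xi_n\otimes v)\;=\;\sum_{i=1}^{N_n}a_i^{(n)}\,\lambda^s_{\tau_2}(b_i^{(n)}\otimes v).
\]
The image of $A_n$ lies in the span of $a_1^{(n)},\dots,a_{N_n}^{(n)}$, so $A_n$ has finite rank.

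It then remains to show $A_n\to A^s_\tau$ in the compact-open topology on $\Hom_{\mathrm{cts}}(V^s,V^s)$. Writing $A^s_\tau-A_n = T(\rho^s(\tau_1)-\xi_n)$, where
\[
T\colon V^s\hat\otimes V^s\longrightarrow\Hom_{\mathrm{cts}}(V^s,V^s),\qquad T(\xi)(v):=(\id\otimes\lambda^s_{\tau_2})(\xi\otimes v),
\]
is the continuous linear map induced by the bilinear pairing $\lambda^s_{\tau_2}$, this reduces to showing that $T$ is continuous when the target carries the compact-open topology. Given a compact $K\subset V^s$ and a continuous seminorm $p$ on $V^s$, the set $\{\xi\otimes v:\xi\in B,\,v\in K\}$ is bounded in $V^s\hat\otimes V^s\hat\otimes V^s$ whenever $B$ is bounded, and the joint continuity of the projective tensor product together with continuity of $\lambda^s_{\tau_2}$ bounds $\sup_{v\in K}p(T(\xi)(v))$ by a continuous seminorm in $\xi$; this is precisely the reason the paper insists on working with the projective tensor product, see Remark~\ref{rem:completion}.

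The main obstacle is this last topological step: passing from pointwise convergence $A_n(v)\to A^s_\tau(v)$, which is immediate from continuity of $\lambda^s_{\tau_2}$, to uniform convergence on compact subsets of $V^s$. Everything else — the gluing identity, the density of algebraic tensors, and the finite-rank property of each $A_n$ — is essentially formal, and the entire argument is insensitive to $s\in\{\pm\}$.
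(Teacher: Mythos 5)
Your argument is correct, and its geometric core coincides with the paper's: both proofs rest on the spin analogue of the gluing relation \eqref{eq:relationC}, which exhibits the cylinder $C^s_{\ell,\tau}$ (for $\tau\in\fh$) as factoring through $R^s_{\ell,\tau_1}$ and a left cylinder, i.e.\ as a \emph{thick} morphism in the sense of Definition~\ref{def:thick}. Where you diverge is in the functional-analytic conclusion. The paper stops at the factorization and invokes \cite[Theorem 4.27]{ST2}, which characterizes thick morphisms in $\TV$ as \emph{nuclear} operators, and then uses that nuclear operators are compact. You instead prove compactness directly: approximate $\rho^s(\tau_1)\in V^s\hat\otimes V^s$ by algebraic tensors, observe that each $A_n$ is finite rank, and check that $\xi\mapsto(\id\otimes\lambda^s_{\tau_2})(\xi\otimes-)$ is continuous from $V^s\hat\otimes V^s$ into the operators with the compact-open topology. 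That last step is sound: joint continuity of the canonical bilinear map into the projective tensor product plus continuity of $\lambda^s_{\tau_2}$ gives, for each continuous seminorm $p$ on $V^s$, an estimate $p(T(\xi)(v))\le q(\xi)\,r(v)$ with $q,r$ continuous seminorms, and since compact sets are bounded this yields convergence uniformly on compacta (indeed on bounded sets). So your proof is more elementary and self-contained, and it delivers exactly the statement of the lemma; what it does not deliver is nuclearity, which is the stronger property the paper's citation provides and which is convenient later when (super)traces of $A_\tau$ and the estimates behind \eqref{eq:Z^pp} are discussed. One small caveat: the spin-compatibility of the gluing relation is asserted rather than proved in both your write-up and the paper, so nothing is lost there, but it is an input, not an automatic formality.
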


\begin{proof}
We use a line of argument developed in our paper \cite{ST2}. A central definition in that paper was the following. 

\begin{defn}\label{def:thick} A morphism $f\colon X\to Y$ in a monoidal category $\sC$ is {\em thick} if it can be factored in the form
\begin{equation}\label{eq:factorization}
\xymatrix{
X\cong \one\otimes X\ar[rr]^{t\otimes \id_X}&&Y\otimes Z\otimes X\ar[rr]^{\id_Y\otimes b}&&Y\otimes\one\cong Y
}
\end{equation}
for morphisms $t\colon \one\to Y\otimes Z$, $b\colon Z\otimes X\to \one$.
\end{defn}
We will consider thick morphisms in  the fibered category $\bEB{2|0}$ over $\Man$ whose objects are the objects of $\EB{2|0}_0$. For $Y_0,Y_1\in \EB{2}_0$ we define the morphism set $\bEB{2|0}(Y_0,Y_1)$ to be the  isomorphism classes of objects in 
$\EB{2|0}_1(Y_0,Y_1)$. We note that the functors $s$, $t$, $u$ and $c$ give $\bEB{2|0}$ the structure of a category. The monoidal structure on the groupoids $\EB{2|0}_i$, $i=0,1$, induces a monoidal structure on $\bEB{2|0}$. Moreover, the functor $E\colon\EB{2|0}\to \TV$ (of categories internal to $\Sym(\Cat^\fl/\Man)$) induces a monoidal functor, fibered over $\Man$,
\[
\overline{E}\colon \bEB{2|0}\ra \bTV
\]

The isomorphism \eqref{eq:additive} of Proposition \ref{prop:relations} holds as well in the bordism category $\EB{2|0}$ with a fixed spin structure $s$ on the cylinders involved. This implies that the morphism $C^s_{\ell,\tau}\in \bEB{2|0}(K^s_\ell,K^s_\ell)$ factors in the form 
\[
 \xymatrix{
K^s_\ell=\emptyset\amalg K^s_\ell\ar[rr]^<>(.5){R^s_{\ell,\tau}\amalg\id}&&
K^s_\ell\amalg K^s_\ell\amalg K^s_\ell
\ar[rr]^<>(.5){\id\amalg L^s_{\ell,0}}&&
K^s_\ell\amalg \emptyset=K^s_\ell}.
\]
In particular, $C^s_{\ell,\tau}$ is a thick morphism in the bordism category $\bEB{2|0}$ for $\tau\in \fh$ (note that unlike $C^s_{\ell,\tau}$ which is defined for  $\tau$ in the {\em closed } upper half-plane, $R^s_{\ell,\tau}$ is only defined for $\tau\in \fh$; is is not hard to show that $C^s_{\ell,\tau}$ is not thick for $\tau\in \R\subset\bar\fh$). 

It follows that $A_\tau^s=\overline{E}(C_{\ell,\tau}^s)\colon V^s\to V^s$ is a thick morphism in the category $\bTV$. One of the main results of our paper \cite[Theorem 4.27]{ST2} gives a  characterization of thick morphisms in the category $\TV$ as  {\em nuclear operators} (known as {\em trace class operators} if domain and range are Hilbert spaces). We won't repeat here the definition of `nuclear operator' (see \eg \cite[Def.\ 4.25, Def.\  4.28 and Lemma 4.37]{ST2}); it suffices here to know that any nuclear operator is compact. 
\end{proof}

We observe that the relations \eqref{eq:additive} and \eqref{eq:relationC} imply the isomorphism $C_{\ell,\tau_1}\circ C_{\ell,\tau_2}\cong C_{\ell,\tau_1+\tau_2}$  in $\EB{2}_1$ for $\tau_1,\tau_2\in\fh$. The same relation holds in $\EB{2|0}$ if we consider cylinders $C_{\ell,\tau}^s$ for a fixed spin structure $s\in \{\pm\}$. This implies that for fixed $\ell$, $A_\tau^s=E(C_{\ell,\tau}^s)$ is a commutative semi-group of compact operators parametrized by the upper half-plane $\fh$. The following considerations are independent of the spin structure $s$ and so we will suppress the superscript $s\in \{\pm\}$. By the spectral theorem we can decompose $V$ into a sum of simultaneous (generalized) eigenspaces for these operators. The non-zero eigenvalues give smooth homomorphisms $\fh\to \C^*$ and hence can be written in the form 
\begin{equation}\label{eq:eigenvalue}
\mu(\tau)=e^{2\pi i(a\tau-b\bar\tau)}=q^a\bar q^b
\end{equation}
for some $a,b\in \C$. Let us denote by $V_{a,b}\subset V$ the generalized eigenspace corresponding to the eigenvalue function $\mu(\tau)$ given by equation \eqref{eq:eigenvalue}. We note that the spaces $V_{a,b}$ are {\em finite dimensional}, since the operators $A_\tau$ are compact; in particular, any generalized eigenspace with non-zero eigenvalue is finite dimensional. 

The element $\rho_\tau\in V\otimes V$ can now be studied on the $V_{a,b}$'s. The following result shows that the `off-diagonal' entries vanish.

\begin{lem} Let $p_{a,b}\colon V\to V_{a,b}$ be the spectral projection onto $V_{a,b}$. Then for $(a,b)\ne (a',b')$ 
\[
p_{a,b}\otimes p_{a',b'}(\rho_\tau)=0
\]
\end{lem}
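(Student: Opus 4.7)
The plan is to derive a commutativity relation between the cylinder operator on the two tensor factors by gluing cylinders onto the right cylinder $R_\tau$. Concretely, using the relation \eqref{eq:additive} (combined with \eqref{eq:relationC} to express $C_{\ell,\tau}$ in terms of $R$'s and $L_0$), one sees geometrically that attaching the cylinder $C_{\ell,\tau_1}$ to either of the two outgoing circles of $R_{\ell,\tau_0}$ produces the same bordism $R_{\ell,\tau_0+\tau_1}$. Applying the functor $E$ gives, for $\tau_0\in\fh$ and $\tau_1\in\bar\fh$,
\[
(A_{\tau_1}\otimes \id)\rho_{\tau_0}=\rho_{\tau_0+\tau_1}=(\id\otimes A_{\tau_1})\rho_{\tau_0}.
\]
Since the spectral projections $p_{a,b}$ commute with the $A_\tau$ (they are defined spectrally from the commuting family $\{A_\tau\}_{\tau\in\fh}$), applying $p_{a,b}\otimes p_{a',b'}$ to both sides yields, writing $x\=(p_{a,b}\otimes p_{a',b'})\rho_{\tau_0}\in V_{a,b}\otimes V_{a',b'}$,
\[
(A_{\tau_1}|_{V_{a,b}}\otimes\id)\,x=(\id\otimes A_{\tau_1}|_{V_{a',b'}})\,x \qquad\text{for all } \tau_1\in\bar\fh.
\]

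Next I would analyze $A_\tau$ on the finite-dimensional generalized eigenspace $V_{a,b}$. Because $\{A_\tau\}$ is a smooth abelian semigroup acting on the finite-dimensional space $V_{a,b}$ with generalized eigenvalue $q^a\bar q^b$, it extends to a smooth representation of $\C$ of the form
\[
A_\tau\big|_{V_{a,b}}=q^a\bar q^b\,\exp(N\tau+M\bar\tau),
\]
where $N,M$ are commuting nilpotent endomorphisms of $V_{a,b}$; similarly on $V_{a',b'}$ with nilpotents $N',M'$. Substituting into the previous identity and dividing by $q^a\bar q^b$, the equation becomes
\[
\bigl(\exp(N\tau+M\bar\tau)\otimes\id\bigr)x=q^{a'-a}\bar q^{b'-b}\,\bigl(\id\otimes\exp(N'\tau+M'\bar\tau)\bigr)x,
\]
an identity of matrix-valued entire functions of $(\tau,\bar\tau)$ valid on $\bar\fh$, hence (by real-analytic continuation) on all of $\R^2$.

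The decisive step is then to compare growth. The left-hand side is polynomial in $(\tau,\bar\tau)$ since $N,M$ are nilpotent and commute; the right-hand side is such a polynomial multiplied by the exponential $e^{2\pi i((a'-a)\tau-(b'-b)\bar\tau)}$. Under our hypothesis $(a,b)\ne(a',b')$, this exponential is not a rational function of the real variables $\opn{Re}\tau,\opn{Im}\tau$, so a polynomial identity $P(\tau,\bar\tau)=e^{\alpha\tau+\beta\bar\tau}Q(\tau,\bar\tau)$ with $(\alpha,\beta)\ne(0,0)$ forces $P=Q=0$. In particular $(\id\otimes\exp(N'\tau+M'\bar\tau))x=0$ identically in $\tau$; evaluating at $\tau=0$ gives $x=0$, which is the claim.

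The main obstacle is the structural claim that $A_\tau|_{V_{a,b}}$ has the exponential form above. This requires checking that the smooth semigroup $\tau\mapsto A_\tau$ on $V_{a,b}$, which a priori is only defined on the closed upper half-plane $\bar\fh$, in fact comes from an abelian Lie group action, so that its infinitesimal generators $\partial_\tau A_\tau|_0$ and $\partial_{\bar\tau}A_\tau|_0$ exist and commute; nilpotency of $N,M$ follows since their scalar parts have already been peeled off into the eigenvalue $q^a\bar q^b$. Everything else is a clean comparison of polynomial versus exponential growth, so this is where I would spend the care; the rest is then forced.
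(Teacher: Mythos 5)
Your proof is correct and follows essentially the same route as the paper: the key input is the gluing relation giving $(A_{\tau_1}\otimes\id)\rho_\tau=(\id\otimes A_{\tau_1})\rho_\tau$, followed by comparing how the two sides act on the spectral summand $V_{a,b}\otimes V_{a',b'}$. The only difference is the treatment of generalized (rather than honest) eigenspaces: the paper iterates the eigenvalue comparison along the Jordan filtration, whereas you write $A_\tau|_{V_{a,b}}=\mu_{a,b}(\tau)\exp(N\tau+M\bar\tau)$ with commuting nilpotents (after extending the smooth commuting semigroup of invertible operators on the finite-dimensional summand to a homomorphism of $\R^2$) and conclude by a polynomial-versus-exponential comparison -- a legitimate alternative, and you correctly identify the extension/nilpotency step as the one point needing care.
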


\begin{proof}
The isomorphisms \eqref{eq:additive} and \eqref{eq:relationC} in Proposition \ref{prop:relations} (or more precisely, their analog for $\EB{2|0}$) imply that the composition 
\[
\xymatrix@1{
\emptyset\ar[rr]^<>(.5){R^s_{\ell,\tau}}&&K^s_\ell\amalg K^s_\ell\ar[rr]^<>(.5){C^s_{\ell,\tau_1}\amalg C^s_{\ell,\tau_2}}&&K^s_\ell\amalg K^s_\ell
}
\]
in $\bEB{2|0}$ is equal to $R^s_{\ell,\tau+\tau_1+\tau_2}$. Applying the functor $\overline{E}$, we obtain 
\[
(A_{\tau_1}\otimes A_{\tau_2})(\rho_\tau)=\rho_{\tau+\tau_1+\tau_2}
\]
In particular, $(A_{\tau_1}\otimes \id)(\rho_\tau)=(\id\otimes A_{\tau_1})(\rho_\tau)$. Restricted to $V_{a,b}\otimes V_{a',b'}$, the operator  $A_{\tau_1}\otimes \id$ has eigenvalue $\mu_{a,b}(\tau_1)$, while 
$\id\otimes A_{\tau_1}$ has eigenvalue $\mu_{a',b'}(\tau_1)$, this shows that the projection of $\rho_\tau$ to $V_{a,b}\otimes V_{a',b'}$ must be zero. In fact, this argument only applies to the actual Eigenspaces, not the generalized Eigenspaces. However, using the Jordan normal form, we see that by an iterated application of our argument to the relevant filtration, we can conclude the same result.
\end{proof}

We recall that $Z^{++}(\tau)=\mu^+(\sigma(\rho^+_\tau))$ and we shall suppress the superscripts `$+$' from now on.

\begin{lem} 
$\mu(\sigma(p_{a,b}\otimes p_{a,b})(\rho_\tau))=\str((A_\tau)_{|V_{a,b}})$
\end{lem}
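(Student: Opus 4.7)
The plan is to reduce both sides to an explicit finite-dimensional computation on $V_{a,b}$ and then match signs using the super-structure. The key input is the factorisation $A_\tau = (\id_V \otimes \mu) \circ (\rho_\tau \otimes \id_V)$ coming from \eqref{eq:Atau}, together with the previous lemma which gives $\rho_\tau = \sum_{c,d} \rho_\tau^{c,d}$ with $\rho_\tau^{c,d} := (p_{c,d}\otimes p_{c,d})\rho_\tau \in V_{c,d} \otimes V_{c,d}$.

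I will first show that for $v \in V_{a,b}$ one has $A_\tau(v) = (\id \otimes \mu)(\rho_\tau^{a,b} \otimes v)$. Since the $(c,d)$\nb-summand $(\id \otimes \mu)(\rho_\tau^{c,d} \otimes v)$ lies in $V_{c,d}$ while $A_\tau(v) \in V_{a,b}$ (as $A_\tau$ preserves the spectral decomposition), applying $p_{c,d}$ for $(c,d)\neq(a,b)$ to the expansion $A_\tau(v) = \sum_{c',d'}(\id \otimes \mu)(\rho_\tau^{c',d'}\otimes v)$ annihilates the left-hand side and isolates the $(c,d)$\nb-summand on the right, forcing it to vanish.

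Next I will choose a homogeneous basis $\{e_k\}$ of $V_{a,b}$ with dual basis $\{f_k\}$, and write $\rho_\tau^{a,b} = \sum_i x_i \otimes y_i$ with $x_i,y_i \in V_{a,b}$ homogeneous. Using the supertrace formula $\str(T) = \sum_k (-1)^{|e_k|} f_k(Te_k)$, the expansion $\sum_k f_k(x_i) e_k = x_i$, and the fact that $f_k(x_i)=0$ unless $|e_k|=|x_i|$, a direct computation gives
\[
\str(A_\tau|_{V_{a,b}}) \;=\; \sum_i (-1)^{|x_i|}\,\mu(y_i \otimes x_i),
\]
while the super-braiding $\sigma(a\otimes b) = (-1)^{|a||b|}\,b\otimes a$ yields
\[
\mu\bigl(\sigma(\rho_\tau^{a,b})\bigr) \;=\; \sum_i (-1)^{|x_i||y_i|}\,\mu(y_i \otimes x_i).
\]

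The final step, which I expect to be the main obstacle, is the parity identity $|x_i|=|y_i|$ in every term of the decomposition: under it $(-1)^{|x_i||y_i|} = (-1)^{|x_i|^2} = (-1)^{|x_i|}$ and the two expressions coincide. This is where flip-preservation of $E$ enters. Since $E$ sends the spin-flip on $K^+$ to the grading involution on $V$ (see \S\ref{subsec:cat_flip}), the morphism $E(R_\tau)\colon \C \to V\otimes V$ is even, so $\rho_\tau$ lies in $V^{ev}\otimes V^{ev} \oplus V^{odd}\otimes V^{odd}$; and because $A_\tau$ is an even operator, the spectral projection $p_{a,b}$ respects the $\Z/2$\nb-grading, hence so does $p_{a,b}\otimes p_{a,b}$, and $\rho_\tau^{a,b}$ remains even. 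This forces $|x_i|=|y_i|$ for each $i$, completing the argument.
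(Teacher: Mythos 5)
Your plan — reduce everything to a finite-dimensional computation on $V_{a,b}$, expand the diagonal block of $\rho_\tau$, and match the braiding sign $(-1)^{|x_i||y_i|}$ against the supertrace sign $(-1)^{|x_i|}$ using evenness of $\rho_\tau$ — is essentially the paper's proof, which picks a basis of $V_{a,b}$ adapted to $\lambda$ and ``calculates both sides''; your dual-basis bookkeeping and the flip-preservation argument for $|x_i|=|y_i|$ are exactly the details the paper leaves implicit, and they are correct. The genuine gap is in your first step: you expand $A_\tau(v)=\sum_{c,d}(\id\otimes\lambda)\bigl(\rho_\tau^{c,d}\otimes v\bigr)$, which presupposes $\rho_\tau=\sum_{c,d}(p_{c,d}\otimes p_{c,d})\rho_\tau$. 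The previous lemma only says that the off-diagonal projections vanish; it does not give this expansion, because $V$ is \emph{not} the direct sum of the generalized eigenspaces $V_{a,b}$ — the compact operator $A_\tau$ has a complementary (residual) spectral subspace $W$, and $\rho_\tau$ may a priori have components in $V_{a,b}\otimes W$, $W\otimes V_{a,b}$ or $W\otimes W$ that the previous lemma says nothing about. (The paper flags precisely this subtlety in the sentence immediately following the lemma.) As written, your isolation argument therefore does not establish $A_\tau|_{V_{a,b}}(v)=(\id\otimes\lambda)(\rho^{a,b}_\tau\otimes v)$.

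The fix does not require any such expansion. With $\{e_k\}$ a homogeneous basis of $V_{a,b}$ and $f_k$ the dual functionals extended by $f_k\circ p_{a,b}$, one has $f_k(A_\tau e_k)=(f_k\,p_{a,b}\otimes\lambda)(\rho_\tau\otimes e_k)$ by \eqref{eq:Atau}, and after splitting the second tensor factor of $\rho_\tau$ by $p_{a,b}$ and $\id-p_{a,b}$ it remains to kill the $(p_{a,b}\otimes(\id-p_{a,b}))$-part. One way: the gluing relations \eqref{eq:relationL} and \eqref{eq:symmetric} give $\lambda(A_{\tau_1}v\otimes w)=\lambda(v\otimes A_{\tau_1}w)$, and since $A_\tau-\mu_{a,b}(\tau)$ is nilpotent on $V_{a,b}$ and invertible on $W$, every $w\in W$ is of the form $(A_\tau-\mu_{a,b}(\tau))^N w'$, whence $\lambda(w\otimes e_k)=\pm\,\lambda(w'\otimes(A_\tau-\mu_{a,b}(\tau))^N e_k)=0$; so $V_{a,b}$ and $W$ are $\lambda$-orthogonal and the mixed component does not contribute. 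Alternatively, run the argument of the previous lemma once more on $V_{a,b}\otimes W$: applying $(A_{\tau_1}\otimes\id)\rho_\tau=(\id\otimes A_{\tau_1})\rho_\tau$ to the component $z=(p_{a,b}\otimes(\id-p_{a,b}))\rho_\tau$ and using that $\id\otimes(A_{\tau_1}-\mu_{a,b}(\tau_1))$ is invertible on $V_{a,b}\otimes W$ (as $V_{a,b}$ is finite-dimensional) while $(A_{\tau_1}-\mu_{a,b}(\tau_1))\otimes\id$ is nilpotent there, forces $z=0$. With either repair, only $(p_{a,b}\otimes p_{a,b})\rho_\tau$ contributes and the rest of your computation goes through verbatim.
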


\begin{proof} The restriction of the form $\lambda\colon V\otimes V\to \C$ to $V_{a,b}$ is non-degenerate (otherwise the restriction of $A_\tau$, which can be expressed in terms of $\lambda$ and $\rho$, would have a non-trivial kernel. Let $\{e_i\}$ be a basis of $V_{a,b}$ such that $\lambda(e_i\otimes e_j)=\delta_{ij}$. Writing $(p_{a,b}\otimes p_{a,b})(\rho_\tau)$ as a linear combination of the basis elements $e_i\otimes e_j$ for $V_{a,b}\otimes V_{a,b}$ and calculating both sides proves the lemma.
\end{proof}
We conclude the last equality in the following equation. For the second equality more care is needed since $V$ is not just the direct sum of the $V_{a,b}$'s. However, a careful estimate shows that for a fixed $a\in \Z$ the `small' eigenvalues of $A_\tau$ don't contribute to the coefficient of $q^a$ in the $q$\nb-expansion of $Z(\tau)$, see \cite{ST3}
\begin{equation}\label{eq:Z^pp}
Z(\tau)=\lambda(\sigma(\rho_\tau))=\sum_{a,b}\str((A_\tau)_{|V_{a,b}})=\sum_{a,b}\mu_{a,b}(\tau)\sdim V_{a,b}
\end{equation}

\begin{lem} \label{lem:cancellation} $\sdim V_{a,b}=0$ for $b\ne 0$.
\end{lem}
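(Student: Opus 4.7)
The plan is to exploit the supersymmetry relation \eqref{eq:condition_susy} to exhibit an explicit isomorphism between the even and odd parts of $V_{a,b}$ whenever $b \neq 0$, so that the supertrace automatically cancels.

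First I would check that $B_\tau$ preserves the decomposition into generalized eigenspaces. Since by hypothesis $B_\tau$ commutes with $A_\tau^+$, it commutes with every spectral projection $p_{a,b}$, hence $B_\tau$ restricts to an odd operator on each finite-dimensional $V_{a,b}$. Next I would analyze the action of $A_\tau^+$ on $V_{a,b}$: by definition of the generalized eigenspace, the restriction $(A_\tau)_{|V_{a,b}}$ equals $\mu_{a,b}(\tau) \cdot \id + N_\tau = q^a \bar{q}^b \cdot \id + N_\tau$ where $N_\tau$ is nilpotent (with $N_\tau$ depending smoothly on $\tau$).

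The crucial step is to apply $\partial/\partial \bar{\tau}$ to this formula and combine with \eqref{eq:condition_susy}. Since $\partial \mu_{a,b}/\partial \bar{\tau} = -2\pi i\, b \cdot q^a \bar{q}^b$, we obtain
\begin{equation*}
-B_\tau^2\big|_{V_{a,b}} = -2\pi i\, b \cdot q^a \bar{q}^b \cdot \id + \frac{\partial N_\tau}{\partial \bar{\tau}}.
\end{equation*}
For $b \neq 0$ the leading term is a non-zero scalar and the second summand is a derivative of a nilpotent family; hence $B_\tau^2\big|_{V_{a,b}}$ is a scalar times identity plus a nilpotent, which is invertible. (To make the nilpotence of $\partial N_\tau/\partial \bar{\tau}$ clean, one can pick $\tau$ generic enough that the Jordan structure of $A_\tau$ is locally constant, so that $N_\tau$ stays nilpotent of bounded order; alternatively argue that since $B_\tau^2$ commutes with $A_\tau$, it preserves the generalized eigenspace filtration and its scalar part dominates.)

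Consequently $B_\tau\big|_{V_{a,b}}$ is itself invertible for $b \neq 0$. Being an invertible odd operator on the $\Z/2$-graded finite-dimensional space $V_{a,b}$, it restricts to isomorphisms $V_{a,b}^{ev} \xrightarrow{\cong} V_{a,b}^{odd}$ and $V_{a,b}^{odd} \xrightarrow{\cong} V_{a,b}^{ev}$, so $\dim V_{a,b}^{ev} = \dim V_{a,b}^{odd}$ and therefore $\sdim V_{a,b} = 0$, as required. The main obstacle I expect is the bookkeeping around the nilpotent part $N_\tau$: one needs to justify that the right-hand side of \eqref{eq:condition_susy}, restricted to $V_{a,b}$, is invertible rather than merely nonzero, which is why the argument above isolates the scalar leading term and uses that nilpotent plus invertible scalar is invertible.
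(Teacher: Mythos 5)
Your overall route is exactly the paper's: use the commutation hypothesis to restrict $B_\tau$ to an odd operator on the finite-dimensional space $V_{a,b}$, differentiate the eigenvalue $\mu_{a,b}(\tau)=q^a\bar q^b$ to see that $-B_\tau^2=\p A_\tau/\p\bar\tau$ is invertible on $V_{a,b}$ when $b\ne 0$, and conclude that an invertible odd operator interchanges $V^{ev}_{a,b}$ and $V^{odd}_{a,b}$, so $\sdim V_{a,b}=0$. You also correctly flag the one point the paper's write-up glosses over here: on a \emph{generalized} eigenspace $A_\tau$ is only $\mu_{a,b}(\tau)\id+N_\tau$ with $N_\tau$ nilpotent, so one has to control the term $\p N_\tau/\p\bar\tau$.

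However, neither of your two patches closes that gap. The derivative of a smoothly varying nilpotent family need not be nilpotent, even when the Jordan type is locally constant: for example $N_t=\left(\begin{smallmatrix} t^2 & t\\ -t^3 & -t^2\end{smallmatrix}\right)$ is nilpotent of constant rank one for $t\ne 0$, yet its derivative has determinant $-t^2\ne 0$. So ``choose $\tau$ generic so the Jordan structure is locally constant'' establishes nothing, and ``the scalar part dominates'' is not an argument: a nonzero scalar plus a non-nilpotent operator can certainly be singular. What rescues the step is structure you did not invoke, namely that the $A_\tau$ form a \emph{commuting} family, since $A_{\tau_1}A_{\tau_2}=A_{\tau_1+\tau_2}$ by \eqref{eq:additive} and \eqref{eq:relationC} (this is also what lets you restrict $B_\tau$ to the \emph{simultaneous} eigenspace $V_{a,b}$ in the first place; in the application $B$ commutes with all the $A_\tau$ by \eqref{eq:2|1_relation}). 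Commutativity lets you upper-triangularize all the $A_\tau|_{V_{a,b}}$ simultaneously in a single $\tau$-independent basis, with every diagonal entry equal to $\mu_{a,b}(\tau)$; differentiating entrywise in that fixed basis, $\p A_\tau/\p\bar\tau$ is again upper triangular with diagonal entries $-2\pi i b\,\mu_{a,b}(\tau)\ne 0$, hence invertible for $b\ne 0$. (This is the ``iterated application to the relevant filtration'' the paper invokes for the preceding lemma; note my counterexample is not a commuting family, which is exactly why it evades this argument.) With that replacement your proof is complete and coincides with the paper's.
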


\begin{proof}
By assumption, the operator $B\colon V\to V$ commutes with the operators $A_\tau$ and hence it restricts to an operator $B_{|V_{a,b}}\colon V_{a,b}\to V_{a,b}$. Restricted to $V_{a,b}$,   the operator $A_\tau$ has eigenvalue $\mu_{a,b}(\tau)$ and hence $\p A_\tau/\p \bar \tau$ has eigenvalue
\[
\frac{\p \mu_{a,b}(\tau)}{\p \bar\tau}=\frac\p{\p\bar\tau}e^{2\pi i(a\tau-b\bar\tau)}=-2\pi ib\mu_{a,b}(\tau)
\]
In particular, for $b\ne 0$, the operator $\p A_\tau/\p \bar \tau=-B^2$ is invertible, and hence $B\colon V_{a,b}\to V_{a,b}$ is an isomorphism. Since $B$ is odd, it maps the even part $V^{ev}_{a,b}$ isomorphically to the odd part $V^{odd}_{a,b}$ and hence $\sdim V_{a,b}=0$. 
\end{proof}

This then implies $Z_E(\tau)=\sum_{a\in \Z}\sdim V_{a,0}\, q^a$. Here $a$ is an integer by the following argument. 
By construction,  $C_{\ell,\tau}\cong C_{\ell,\tau'}$ if $\tau=\tau'\mod\Z$ and hence the operator $A_\tau=E(C_{\ell,\tau})$ depends only on $q=e^{2\pi i\tau}$. That forces $\mu_{a,b}(\tau)=q^a$ to be a function of $q$ which forces $a$ to be an integer. 

This shows that $Z^{++}(\tau)$ is a {\em holomorphic} function with an integral $q$\nb-expansion. Moreover, the fact that $A_\tau$ is a compact operator forces $V_{a,0}=0$ for sufficiently negative integers $a$. 

For the arguments so far, we fixed $\ell>0$, and suppressed the $\ell$\nb-dependence in the notation. If we vary $\ell$, the function $Z(\ell,\tau)$ and hence the coefficients of its $q$\nb-expansion depend continuously on that parameter. Hence the {\em integrality} of the coefficients show that $Z(\ell,\tau)$ is in fact {\em independent of $\ell$}.  This implies by remark \ref{rem:ell_independence} that $Z(1,\tau)$ is a holomorphic modular function.

\section{Supersymmetric Euclidean field theories}\label{sec:susy}
In this section we will define {\em supersymmetric Euclidean field theories} by replacing manifolds by supermanifolds and Euclidean structures by super Euclidean structures in the definitions of the previous two sections. In more detail, we will consider rigid geometries \ie  $(G,\M)$-structures in the category of supermanifolds (Def.\ \ref{def:susyBord}), and will introduce in particular the Euclidean geometry $(\Iso(\E^{d|\delta}),\E^{d|\delta})$ (see section \ref{subsec:superEuclidean}). We will define super versions of our categories $\TV$, $\GMBord$ and ${\EB d}$. From a categorical point of view, these categories will be internal to the strict $2$\nb-category $\Sym(\Cat^\fl/\csM)$ of symmetric monoids in the $2$\nb-category of categories with flip fibered over the category $\csM$ of supermanifolds. A $(G,\M)$-field theory is then defined to be a  functor $\GMBord\to \TV$ of these internal categories (Definition \ref{def:susyGMFT}); a (supersymmetric) Euclidean field theory of dimension $d|\delta$ is the special case where $(G,\M)$ is (super) Euclidean geometry of dimension $d|\delta$. We end this section with the definition of the partition function of a Euclidean field theory of dimension $2|1$ (Def.\ \ref{def:partition2}), and the proof that this partition function is an integral holomorphic modular function.

\subsection{Supermanifolds}
\label{subsec:SMan}
There are two notions of `supermanifolds', based on commutative superalgebras  over $\R$ and $\C$, respectively. Our interest in field theories of dimension $2|1$ in this paper forces us to work with the latter (see Remark \ref{rem:complex}). We recall that a {\em commutative superalgebra} is a commutative monoid in the symmetric monoidal category $\SVect$ of super vector spaces (see Example \ref{ex:SVect}). More down to earth, it is a  $\Z/2$\nb-graded algebra such that for any homogeneous elements $a,b$ we have $a\cdot b= (-1)^{|a||b|}b\cdot a$. 

\begin{defn} A {\em supermanifold $M$ of dimension $p|q$} is a pair $(M_{\red},\scr O_M)$ consisting of a (Hausdorff, second countable) topological space $M_{\red}$ (called the {\em reduced manifold}) and a sheaf $\scr O_M$ (called the {\em structure sheaf}) of commutative superalgebras over $\C$ locally isomorphic to $(\R^p,\mathscr C^\infty(\R^p)\otimes\Lambda[\theta_1,\dots,\theta_q])$. Here $\mathscr C^\infty(\R^p)$ is the sheaf of smooth complex valued functions on $\R^p$, and $\Lambda[\theta_1,\dots,\theta_q])$ is the exterior algebra generated by elements $\theta_1,\dots,\theta_q$ (which is equipped with a $\Z/2$\nb-grading by declaring the elements $\theta_i$ to be odd). It is more customary to require that $\scr O_M$ is a sheaf of real algebras; in this paper we will always be dealing with a {\em structure sheaf of complex algebras} (these are called {\em cs-manifolds} in \cite{DM}). 
Abusing language, the global sections of $\scr O_M$ are called {\em functions} on $M$; we will write $C^\infty(M)$ for the algebra of functions on $M$. 
\end{defn}

As explained by Deligne-Morgan in \cite[\S 2.1]{DM},  the quotient sheaf $\scr O_M/J$, where $J$ is the ideal generated by odd elements, can be interpreted as a sheaf of smooth functions on $M_{\red}$, giving it a smooth structure. Morphisms between supermanifolds are defined to be morphisms of ringed spaces. 

\begin{ex}\label{ex:supermfd}
Let $N$ be a smooth $p$\nb-manifold and $E\to N$ be a smooth complex vector bundle of dimension $q$. Then $\Pi E\=(N,\scr C^\infty(\Lambda E^{\spcheck}) )$ is an example of a supermanifold of dimension $p|q$. Here $\scr C^\infty(\Lambda E^{\spcheck}) )$ is the sheaf of sections of the exterior algebra bundle $\Lambda E^{\spcheck}=\bigoplus_{i=0}^q\Lambda^i(E^{\spcheck})$ generated by $E^{\spcheck}$, the bundle dual to $E$; the $\Pi$ in $\Pi E$ stands for {\em parity reversal}.  We note that {\em every} supermanifold is isomorphic to a supermanifold constructed in this way (but not every morphism $\Pi E\to\Pi E'$ is induced by a vector bundle homomorphism $E\to E'$). In particular if $E=TN_\C$, the complexified tangent bundle of $N$, then the algebra of functions on the supermanifold $\Pi TN_\C$ is given by 
\begin{equation}\label{eq:forms}
C^\infty(\Pi TN_\C)=C^\infty(N,\Lambda TN_\C^{\spcheck})=\Omega^*(N;\C),
\end{equation}
where $\Omega^*(N;\C)$ is the algebra of complex valued differential forms on $N$. 
\end{ex}

Now we discuss how the rigid geometries from section \ref{subsec:rigid}, their family versions, and the $(G,\M)$-bordism groups (see Definition \ref{def:GMBord}) can be generalized to supermanifolds. This is straightforward since we've been careful to express these definition in categorical terms, \ie in terms of objects and morphisms of the category $\Man$ of smooth manifolds, which we now replace by the category $\csM$ of supermanifolds. When topological terms, like `open subset of $Y$' appear in a definition, they have to be interpreted in term of the reduced manifold; \eg an open subset of a supermanifold $Y$ has to be interpreted as the supermanifold given by the structure sheaf $\scr O_Y$ restricted to some open subset $U$ of the reduced manifolds $Y_{red}$. 

\begin{defn}\label{def:susyBord}
Let $G$ be a super Lie group (\ie a group object in the category $\csM$), and suppose that $G$ acts on some supermanifold $\M$ of dimension $d|\delta$. Then we can interpret the pair $(G,\M)$ as a `rigid geometry', and define as in Definition \ref{def:GM_object} the notion of a $(G,\M)$-manifold. If in addition we specify a $1$\nb-codimensional submanifold $\M^c\subset \M$ (this is a manifold of dimension $d-1|\delta$) then we can define a $(G,\M)$-structure on pairs as well. 

Generalizing Definition \ref{def:GM_fam} from manifolds to supermanifolds, we obtain a notion of families $Y\to S$ of $(G,\M)$-manifolds respectively pairs (we note that the parameter space $S$ of these families is in general a supermanifold).  The construction of the bordism categories $\GMBord_0$, $\GMBord_1$ (Def.\ \ref{def:GMBord}) generalizes as well; again, point topology assumptions, \eg the properness assumption on the map $Y^c\to S$ in the definition of an object has to be interpreted as a statement about the reduced manifolds. The forgetful functor which sends a family of $(G,\M)$-manifolds to its parameter space is now a functor from $\GMBord_i$ to the category of supermanifolds $\csM$, making them categories over $\csM$. 

We observe that the category $\csM$ comes equipped with a canonical flip $\theta$: For every supermanifold $M$, we can define $\theta_M(f)=f$ respectively $\theta_M(f)=-f$ depending on whether $f\in C^\infty(M)$ is even respectively odd. 
If there is a central point $g:\pt\to G$ such that multiplication by $g$ induces $\theta_\M$, we can define a flip $\theta$ for the categories $\GMBord_i$, making them objects in the strict $2$\nb-category $\Sym(\Cat^\fl/\csM)$. Here $\Cat^\fl/\csM$ is the strict $2$\nb-category of categories with flip over $\csM$, and $\Sym(\Cat^\fl/\csM)$ is the strict $2$\nb-category  of symmetric monoids in that $2$\nb-category. So what we obtain in the end is a category $\GMBord$ which is internal to $\Sym(\Cat^\fl/\csM)$. More generally, if $X$ is a manifold, we have the bordism category $\GMBord(X)$ of $(G,\M)$-manifolds with maps to $X$. 
\end{defn}
\subsection{Super Euclidean geometry}
\label{subsec:superEuclidean}

Next we define the super analogues of Euclidean manifolds, namely $(G,\M)$-manifolds, where $\M$ is {\em super Euclidean space}, and $G$ is the {\em super Euclidean group}.   Our definitions are modeled on the definitions of {\em super Minkowski space} and {\em super Poincar\'e group} in \cite[\S 1.1]{DF}, \cite[Lecture 3]{Fr}.

To define super Euclidean space, we need the following data:

\medskip

\begin{tabular}{p{1in}p{5.2in}}
$V$&a real vector space with an inner product\\
$\Delta$&a complex spinor representation of $Spin(V)$\\
$\Gamma\colon \Delta\otimes \Delta\to V_\C$
&a $Spin(V)$\nb-equivariant, non-degenerate symmetric pairing
\end{tabular}

\medskip

Here $V_\C$ is the complexification of $V$. A complex representation of $Spin(V)\subset \C\ell(V)^{ev}$ is a {\em spinor representation} if it extends to a module over $\C\ell(V)^{ev}$, the even part of the complex Clifford algebra generated by $V$. 

The supermanifold $V\times \Pi\Delta$ is the {\em super Euclidean space}. We note that this is the supermanifold associated to the trivial complex vector bundle $V\times \Delta\to V$, and hence the algebra of functions on this supermanifold is the exterior algebra (over $C^\infty(V)$) generated by the $\Delta^{\spcheck}$\nb-valued functions on $V$, which we can interpret as {\em spinors on $V$}.

The pairing $\Gamma$ allows us to define a multiplication on the supermanifold $V\times\Pi \Delta$ by
\begin{align}\label{eq:supertranslations}
(V\times\Pi \Delta)\times (V\times\Pi \Delta)&\ra V\times\Pi \Delta\\
(v_1,w_1), (v_2,w_2)&\mapsto (v_1+v_2+\Gamma(w_1\otimes w_2),w_1+w_2),
\end{align}
which gives $V\times\Pi \Delta$ the structure of a super Lie group, \ie a group object in the category of supermanifolds  (see \cite[\S 2.10]{DM}). Here we describe the multiplication map in terms of the {\em functor of points approach} explained \eg in \cite[\SS 2.8-2.9]{DM}: for any supermanifold $S$ the set $X_S$ of  {\em $S$\nb-points} in another supermanifold $X$ consists of all morphisms  $S\to X$. For example, an $S$\nb-point of the supermanifold $V\times\Pi \Delta$ amounts to a pair $(v,w)$ with $v\in C^\infty(S)^{ev}\otimes_\C V_\C$, $w\in C^\infty(S)^{odd}\otimes \Delta$ and $\bar v_{red}=v_{red}$ (where $v_{red}\in C^\infty(S_{red})\otimes V_\C$ is the restriction of $v$ to the reduced manifold, and $\bar v_{red}$ is its complex conjugate). A morphism of supermanifolds $X\to Y$ induces maps  $X_S\to Y_S$ between the $S$\nb-points of $X$ and $Y$, which are functorial in $S$. Conversely, any collection of maps $X_S\to Y_S$ which is functorial in $S$ comes from a morphism $X\to Y$ (by Yoneda's lemma). 

We note that the spinor group $Spin(V)$ acts on the supermanifold $V\times\Pi \Delta$ by means of the double covering $Spin(V)\to SO(V)$ on $V$ and the spinor representation on $\Delta$. The assumption that the pairing $\Gamma$ is $Spin(V)$\nb-equivariant guarantees that this action is compatible with the (super) group structure we just defined. We define the {\em super Euclidean group} to be the semi-direct product
$
(V\times\Pi \Delta)\rtimes Spin(V)$.
By construction, this supergroup acts on the supermanifold $V\times\Pi \Delta$ (the {\em translation} subgroup $V\times\Pi \Delta$ acts by group multiplication on itself, and $Spin(V)$ acts as explained above). 

We will use the following notation and terminology
\begin{alignat}{2}
\E^{d|\delta}&:=V\times\Pi\Delta
&\qquad&\text{super Euclidean space}\\
\Iso(\E^{d|\delta})&:=(V\times\Pi\Delta)\rtimes Spin(V)
&\qquad&\text{super Euclidean group},
\end{alignat}
where $d=\dim_\R V$, $\delta=\dim_\C\Delta$. 

\begin{rem}\label{rem:superEuclidean} 
Up to isomorphism, there are two (respectively one) irreducible module(s) of dimension $2^{(d-2)/2}$ (respectively $2^{(d-1)/2}$) over $\C\ell(V)^{ev}$ if $d$ is even (respectively odd). This implies that $\delta$ is a multiple of $2^{(d-2)/2}$ (respectively $2^{(d-1)/2}$). In particular if $\delta=1$, the case we are interested in, this forces $d=0,1,2$. 

Similarly, up to isomorphism, the inner product space $V$ and hence the associated Euclidean group is determined by the dimension of $V$. By contrast, the isomorphism class of the data $(V,\Delta,\Gamma)$ is in general not determined by the superdimension $d|\delta=\dim_\R V|\dim_\C\Delta$. In particular,   the (isomorphism class of the) pair $(\Iso(\E^{d|\delta}),\E^{d|\delta})$ might depend on (the isomorphism class of) $(V,\Delta,\Gamma)$, not just $d|\delta$,  contrary to what the notation might suggest.
\end{rem}

In this paper, we are only interested in the cases $d|\delta=0|1,1|1$ and $2|1$. We note that $\C\ell_0^{ev}=\C\ell_1^{ev}=\C$ and hence there is only one module $\Delta$ (up to isomorphism) of any given dimension $\delta$. For $d=\dim V=0$, the homomorphism  $\Gamma$ is necessarily trivial; for $d=1$, $\delta=1$, the homomorphism  $\Gamma$ is determined (up to isomorphism of the pair $(\Delta,\Gamma)$) by the requirement that $\Gamma$ is non-degenerate. 

For $d=2$, $\delta=1$, there are two non-isomorphic modules $\Delta$ over $\C\ell_2^{ev}=\C\oplus\C$. To describe them explicitly as representations of $Spin(2)$, we identify the double covering map $Spin(2)\to SO(2)=S^1$ with $\R/2\Z\to \R/\Z$ by mapping $\tau\in \R/\Z$ to $q=e^{2\pi i\tau}\in S^1$.
 The irreducible complex representations of $Spin(2)$ are parametrized by half integers $k\in \frac 12\Z$. For $k\in \frac 12\Z$ let us write $\C_k$ for  the complex numbers equipped with a $Spin(2)$\nb-action such that $\tau\in \R/2\Z$ acts by multiplication by $q^k=e^{2\pi i \tau k}$ (note that this is well-defined for $k\in \frac 12\Z$). Then up to isomorphism $\Delta=\C_k$ for $k=\pm 1/2$ and the $S^1$\nb-equivariant homomorphism 
\[
\Delta\otimes\Delta=\C_{2k}\overset\Gamma\ra
V_\C=\C_1\oplus\C_{-1}
\]
is given by the inclusion map into the first summand (for $k=1/2$) respectively second summand (for $k=-1/2$). For reasons that will become clear later, we fix our choice of $(\Delta,\Gamma)$ to be given by $k=-1/2$. Specializing the multiplication on the super Lie group $\E^{d|\delta}=V\times \Pi \Delta$ (see \eqref{eq:supertranslations}) to the case at hand, we obtain
\begin{equation}\label{eq:E2|1}
\E^{2|1}\times \E^{2|1}\ra \E^{2|1}
\qquad
(\tau_1,\bar\tau_1,\theta_1),(\tau_2,\bar\tau_2,\theta_2)\mapsto
(\tau_1+\tau_2,\bar\tau_1+\bar\tau_2+\theta_1\theta_2,\theta_1+\theta_2),
\end{equation}
where $\theta_i$ are odd functions on some parametrizing supermanifold $S$, and  $\tau_i,\bar\tau_i$ are even functions whose restriction to $S_{red}$ are complex conjugates of each other.

\begin{rem}\label{rem:complex} If the module $\Delta$ over $\C\ell(V)^{ev}$ (the even part of the complex Clifford algebra generated by the real vector space $V$),  is the complexification of a real module $\Delta_\R$ over $C\ell(V)^{ev}$ (the even part of the real Clifford algebra generated by $V$), then we can consider the {\em real supermanifold} $V\times \Pi \Delta_\R$, and work throughout in the category of real supermanifolds. This happens for $d=0,1$, $\delta=1$ and prompted us to work with real supermanifolds in our papers \cite{HoST}, \cite{HKST} which deal with these cases. 

By contrast, for $d=\dim V=2$, the algebra $C\ell(V)^{ev}$ is isomorphic to  $\C$, and hence the smallest real module over this algebra has dimension two. This forces us to work with {\em supermanifolds} if we want to consider Euclidean structures or Euclidean field theories of dimension $2|1$. 
\end{rem}


\subsection{Supersymmetric field theories}\label{subsec:susyFT2}

In Definition \ref{def:famTV} we defined the categories $\TV_i$, $i=0,1$, over $\Man$ resulting in categories with flip over $\Man$, where the flip was given by the grading involution. These categories can be extended to categories with flip over $\csM\supset\Man$. An object of $\TV_0$ is a supermanifold $S$ and a sheaf $V$ over $S_{\red}$ of $\Z/2$\nb-graded topological $\scr O_S$-modules. We note that the flip $\theta_{S,V}$ of such an object is again the automorphism of $(S,V)$ given by the grading involution. In contrast to the earlier situation where $S$ was a manifold, this in now in general a non-trivial automorphism of $S$. In categorical terms, the forgetful functor $\TV_0\to \csM$ preserves the flip defined on both categories by the grading involution. This construction then results in objects $\TV_0,\TV_1\in \Cat^\fl/\csM$ (the strict $2$\nb-category of categories with flip fibered over $\csM$). In fact, these are symmetric monoids in $\Cat^\fl/\csM$ and  fit together to give a category $\TV$ internal to the strict $2$\nb-category $\Sym(\Cat^\fl/\csM)$ of symmetric monoids of $\Cat^\fl/\csM$.

\begin{defn}\label{def:susyGMFT}
Given a geometry $(G,\M)$ as in Definition \ref{def:susyBord}, a {\em $(G,\M)$-field theory} is a functor
\[
E\colon \GMBord\ra \TV
\]
of categories internal to $\Sym(\Cat^\fl/\csM)$. More generally, a {\em $(G,\M)$-field theory over $X$} is a functor $E\colon \GMBord(X)\ra \TV$. If $(G,\M)$ is the Euclidean geometry $(\Iso(\E^{d|\delta}),\E^{d|\delta})$, we refer to $E$ as a {\em (supersymmetric) Euclidean field theory of dimension $d|\delta$}. 
\end{defn}

Let $E$ be a $(G,\M)$-field theory and $\Sigma\in \GMBord_1(\emptyset,\emptyset)$, \ie a family of closed supermanifolds with $(G,\M)$-structure parametrized by some supermanifold $S$. We can evaluate $E$ on $\Sigma$ to obtain an element $E(\Sigma)\in C^\infty(S)^{ev}$. Since $E$ is a functor between categories internal to $\Sym(\Cat^{\fl}/\csM)$, no information is lost by restricting $\Sigma$ to be a family of {\em connected} closed $(G,\M)$-manifolds.  From an abstract point of view, the assignment $\Sigma\mapsto E(\Sigma)$ is a function on the moduli stack of closed connected $(G,\M)$-manifolds.  

\begin{defn}\label{def:partition2}
The function $Z_E$ described above is the {\em partition function} of the field theory $E$.
\end{defn}
If $(G,\M)$ is the Euclidean geometry $(\Iso(\E^{2|0}),\E^{2|0})$ then the corresponding moduli stack has two components labeled by the (isomorphism classes of) spin structures on a torus. The function  on the component corresponding to the non-bounding spin structure $++$ can be interpreted as an $SL_2(\Z)$-invariant function on $\fh \times \R_+$ by mapping this quotient into the moduli space. Moreover, we can further restrict it to $\fh$, giving up the $SL_2(\Z)$-invariance. This restriction $Z^{++}_E:\fh \to \C$ then agrees with our previous Definition~\ref{def:partition}. 

We observe that a Euclidean field theory $E$ of dimension $d|\delta$ determines an associated {\em reduced Euclidean field theory} $\bar E$ of dimension $d|0$ which is given as the composition 
\begin{equation}\label{eq:superfication}
\xymatrix@1{
\EB{d|0}\ar[r]^<>(.5){\cS}&\EB{d|\delta}\ar[r]^<>(.5){E}&\TV
}
\end{equation}
$\cS$ is the {\em superfication functor}, given by associating to a Euclidean manifold $Y$ of dimension $d|0$ functorially a supermanifold $\cS (Y)$ of dimension $d|\delta$ with Euclidean structure. We recall that a $d|0$\nb-dimensional Euclidean structure on $Y$ determines in particular a spin-structure on $Y$ and hence a principal $Spin(d)$\nb-bundle $Spin(Y)\to Y$. Then we define
\[
\cS(Y):=\Pi\left(Spin(Y)\times_{Spin(d)}\Delta\right)
\]
where $\Delta$ is our choice of the complex spinor representation used in the definition of Euclidean structure of dimension $d|\delta$, $\delta=\dim_\C\Delta$. Moreover $\Pi$ is the construction that turns complex vector bundles over ordinary manifolds into supermanifolds (see Example \ref{ex:supermfd}). It is not hard to see that each $\E^d$\nb-chart for $Y$ determines a $\E^{d|\delta}$\nb-chart for $\cS(Y)$. The transition function for two charts for $\cS(Y)$ is the image of the transition function of the corresponding charts for $Y$ under the inclusion homomorphism 
\[
\Iso(\E^d)=\E^d\rtimes Spin(d)\into \E^{d|\delta}\rtimes Spin(d)=\Iso(\E^{d|\delta})
\]

\begin{rem}\label{rem:partition2} Let $E$ be a Euclidean field theory of dimension $2|1$. Then its partition function $Z_{ E}$ is determined by the partition function $Z_{\bar E}$ of the associated reduced Euclidean spin field theory $\bar E=E\circ\cS$ because the moduli stack of supertori only has one odd direction. As mentioned before, we usually only study the restriction $Z^{++}_{\bar E}$ to the component corresponding to the non-bounding spin structure $++$. By Propositions~\ref{prop:modular} and \ref{prop:square}, it is in fact independent of the scale $\ell$ and hence determined by its restriction to $\fh$. This restriction was called the `partition function' in Definition~\ref{def:partition}.
\end{rem}

\subsection{Modularity of the partition function of $\EFT{2|1}$'s, part II}
\label{subsec:modular2}

The goal of this subsection is to prove the following result.

\begin{prop}\label{prop:square} Let $E$ be a $\EFT{2}$ which has a supersymmetric extension $\wh E\in \EFT{2|1}$, i.e., $E=\wh E\circ\cS$. Then $E$ satisfies the assumption of Proposition \ref{prop:modular}.
\end{prop}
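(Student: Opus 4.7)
The plan is to exploit the extra odd direction in the super Euclidean group $\Iso(\E^{2|1})$, whose multiplication law \eqref{eq:E2|1} places a $\theta_1\theta_2$ cross-term precisely in the $\bar\tau$ slot, which will manufacture the required square identity $\partial A^+_\tau/\partial\bar\tau=-B_\tau^2$. First I would promote the spin cylinder $C^+_{\ell,\tau}$ of \S\ref{subsec:generators} to a super-family $C^+_{\ell,\tau,\bar\tau,\theta}\in \EB{2|1}_1(\cS K^+_\ell,\cS K^+_\ell)$, defined as the quotient $\ell\Z\backslash\E^{2|1}$ (with $\ell\Z$ sitting centrally in $\Iso(\E^{2|1})$ by \eqref{eq:E2|1}) equipped with the bordism structure given by the identity on one side and left multiplication by $(\ell\tau,\ell\bar\tau,\ell\theta)\in\Iso(\E^{2|1})$ on the other. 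The parameter supermanifold $P$ has reduced space the usual $\fh$ together with an extra nilpotent even direction allowing $\bar\tau$ to be pushed off the diagonal $\bar\tau=\overline\tau$ and one odd coordinate $\theta$. The body of this family reproduces the superfication $\cS C^+_{\ell,\tau}$, so that $\wh E$ applied to the body returns $A^+_\tau$.

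Next, since composition of these cylinders corresponds to multiplication in the super-translation subgroup of $\Iso(\E^{2|1})$, the group law \eqref{eq:E2|1} yields an isomorphism of families
\[
C^+_{\ell,\tau_1,\bar\tau_1,\theta_1}\circ C^+_{\ell,\tau_2,\bar\tau_2,\theta_2}\;\cong\;C^+_{\ell,\,\tau_1+\tau_2,\;\bar\tau_1+\bar\tau_2+\theta_1\theta_2,\;\theta_1+\theta_2}
\]
over $P\times P$. Because $\theta^2=0$, one may write $\wh E(C^+_{\ell,\tau,\bar\tau,\theta})=\Phi(\tau,\bar\tau)+\theta\,\Psi(\tau,\bar\tau)$ with $\Phi$ even and $\Psi$ odd as operators on the super vector space $V$, subject to $\Phi(\tau,\overline\tau)=A^+_\tau$. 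Applying $\wh E$ to the displayed composition, using the Koszul sign $\Psi_1\theta_2=-\theta_2\Psi_1$ on the left and Taylor-expanding $\Phi(\,\cdot\,,\bar\tau_1+\bar\tau_2+\theta_1\theta_2)=\Phi+\theta_1\theta_2\,\partial_{\bar\tau}\Phi$ on the right, comparison of the coefficients of $1,\theta_1,\theta_2,\theta_1\theta_2$ would yield
\begin{align*}
\Phi(\tau_1,\bar\tau_1)\,\Phi(\tau_2,\bar\tau_2)&=\Phi(\tau_1+\tau_2,\bar\tau_1+\bar\tau_2),\\
\Psi(\tau_1,\bar\tau_1)\,\Phi(\tau_2,\bar\tau_2)&=\Phi(\tau_1,\bar\tau_1)\,\Psi(\tau_2,\bar\tau_2)=\Psi(\tau_1+\tau_2,\bar\tau_1+\bar\tau_2),\\
\Psi(\tau_1,\bar\tau_1)\,\Psi(\tau_2,\bar\tau_2)&=-\partial_{\bar\tau}\Phi(\tau_1+\tau_2,\bar\tau_1+\bar\tau_2).
\end{align*}

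Finally I would set $B_\tau:=\Psi(\tau/2,\overline{\tau/2})$. The middle identity applied with $(\tau_1,\tau_2)=(\tau/2,\tau)$ and with $(\tau,\tau/2)$ both produce $\Psi(3\tau/2,\overline{3\tau/2})$, so $B_\tau$ commutes with $A^+_\tau$; specializing the third identity to $\tau_1=\tau_2=\tau/2$, $\bar\tau_1=\bar\tau_2=\overline{\tau/2}$ gives $B_\tau^2=-\partial_{\bar\tau}A^+_\tau$, which is exactly the hypothesis of Proposition \ref{prop:modular}. The oddness of $B_\tau$ is automatic from the parity of $\theta$ and the requirement that $\wh E(C^+)$ be parity-preserving. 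The principal obstacle I anticipate is Step 1: making the super-family rigorous within the cs-manifold framework of \S\ref{subsec:superEuclidean}, in particular specifying the parameter supermanifold $P$ so that $\tau$ and $\bar\tau$ vary independently in nilpotent directions while agreeing on $P_{\red}$, and verifying that the central inclusion $\ell\Z\hookrightarrow\E^{2|1}$ acts freely so the quotient is genuinely an object of $\EB{2|1}_1$. Once Step 1 is in place, the remaining steps are formal consequences of the functoriality of $\wh E$ and the group law \eqref{eq:E2|1}, with only super-sign bookkeeping requiring care.
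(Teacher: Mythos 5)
Your proposal is correct and follows essentially the same route as the paper: the paper's proof also applies $\wh E$ to the family of supercylinders $\ell\Z\backslash\E^{2|1}$ parametrized by $\fh^{2|1}$, uses the group law \eqref{eq:E2|1} to see that $f^s=A^s+\theta B^s$ is a semigroup homomorphism, derives exactly your three relations \eqref{eq:2|1_relation}, and sets $B_\tau:=B^+(\tau/2)$ to obtain the hypothesis of Proposition \ref{prop:modular}. The only quibble is that $\ell\Z$ is not central in all of $\Iso(\E^{2|1})$ (it is not preserved by $Spin(2)$); what you need, and what holds by \eqref{eq:E2|1}, is that the even translation $\ell$ commutes with the supertranslation $(\ell\tau,\ell\bar\tau,\ell\theta)$, so the bordism structure map descends to the quotient, just as in the paper's construction of $C_{\ell,\tau}$.
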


In conjunction with Proposition \ref{prop:modular}, this implies the first part of Theorem \ref{thm:modular} for EFT's. The second part will be proved after the proof of Proposition \ref{prop:square}.

\begin{proof}
The proof of this result is based on looking at the semi-group of operators obtained by applying the field theory $\wh E$ to a family of `supercylinders' $C^s\to \fh^{2|1}$ parametrized by $\fh^{2|1}\subset \R^{2|1}$ (here $\fh^{2|1}$ is the supermanifold obtained from $\R^{2|1}$ by restricting the structure sheaf to the upper half plane). This family requires the choice of a scaling parameter $\ell\in \R_+$, and it is an extension of the family  of cylinders $C^s_{\ell,\tau}$ parametrized by $\tau\in \fh$, since the pullback of $C^s$ via $\pt\overset\tau\ra \fh\subset \fh^{2|1}$ is $C^s_{\ell,\tau}$. Composition of supercylinders corresponds to the multiplication map $\mu\colon \E^{2|1}\times \E^{2|1}\to \E^{2|1}$ of the super Lie group $\E^{2|1}$ (see \eqref{eq:E2|1}) in the following sense. Composing supercylinders of our family parametrized by $\fh^{2|1}$ leads to a family of supercylinders parametrized by $\fh^{2|1}\times\fh^{2|1}$. This family is isomorphic to the family  given by pulling back $C^s$ via the map $\mu\colon \fh^{2|1}\times \fh^{2|1}\to \fh^{2|1}$, the restriction of $\mu$ to the semigroup $\fh^{2|1}\subset \E^{2|1}$.

Applying the functor $\wh E$ to the family $C^s$ we obtain a smooth map 
\[
f^s\colon \fh^{2|1}\ra \cN(V^s)
\]
to the space of nuclear operators on $V^s=E(K_\ell^s)$. Since composition of cylinders corresponds to multiplication in $\fh^{2|1}$, this is a semigroup homomorphism. We can write the function $f^s$ in the form $f^s=A^s+\theta B^s$, where $A^s,B^s\colon \fh\to \cN(V)$ are smooth maps and $A^s(\tau)$ (respectively $B^s(\tau)$) is an even (respectively odd) operator on the $\Z/2$\nb-graded vector space $V^s$. The operators $A^s(\tau)$ are determined by the field theory $E$ since $A^s(\tau)=E(C_{\ell,\tau}^s)$. 
The homomorphism property of $f^s$ is equivalent to the following relations (for simplicity we suppress the superscripts $s$ throughout)
\begin{equation}\label{eq:2|1_relation}
\begin{aligned}
A(\tau_1)A(\tau_2)&=A(\tau_1+\tau_2)\\
A(\tau_1)B(\tau_2)&=B(\tau_1)A(\tau_2)=B(\tau_1+\tau_2)\\
B(\tau_1)B(\tau_2)
&=-\frac{\partial A}{\partial \bar z}(\tau_1+\tau_2)
\end{aligned}
\end{equation}
In particular,  $C(\tau):=B^+(\tau/2)$ is an odd operator whose square is $-\frac{\p A^+}{\p\bar\tau}(\tau)$, which proves Proposition \ref{prop:square}.
\end{proof}

\begin{proof}[Proof of Theorem \ref{thm:modular}, second part]
We recall that according to Proposition \ref{prop:partition_spin_2EFT} every integral modular function can be realized as the partition function of a $\CEFT{2|0}$ $E$. It remains to show that we can construct $E$ in such a way that it has an extension to a $\CEFT{2|1}$ $\wh E$. 

We note that there is a necessary condition for the existence of  $\wh E$: The proof of Proposition \ref{prop:square} shows that if $E$ has a supersymmetric extension $\wh E$, then we can find smooth functions $B^s\to \cN(V^s)^{odd}$ for $s=\pm$ such that the relations \eqref{eq:2|1_relation} are satisfied for $A^s(\tau)=E(C^s_\tau)$. It turns out that this condition is sufficient as well. The proof involves a study of the moduli stack of supertori. A priori, the requirement that the partition function $Z_E$ on the stack of Euclidean tori should extend to a function on the stack of supertori could impose new restrictions on $Z_E$ besides $SL_2(\Z)$\nb-equivariance. In fact, it does: $Z_E$ extends to the moduli stack of supertori \Iff $Z_E$ is {\em holomorphic}. However, the existence of $B^s$ satisfying  relations \eqref{eq:2|1_relation} for $A^s(\tau)=E(C_\tau^s)$ ensures that for $\tau\in \fh^{+s}$ the function
\[
Z_E(\tau)=E(T^{+s}_\tau)=\str E(C^s_\tau)=\str A^s(\tau)
\]
is holomorphic. Hence the existence of $B^s$ satisfying the relations implies that $Z_E$ is holomorphic restricted to $\fh^{++}$ and $\fh^{+-}$. This implies that $Z_E$ is holomorphic on the other two components $\fh^{-+}$, $\fh^{--}$ as well, since $Z_E$ is $SL_2(\Z)$\nb-equivariant and the $SL_2(\Z)$\nb-action permutes $\fh^{+-}$, $\fh^{-+}$ and $\fh^{--}$.

For the EFT $E$ we've constructed in the proof of Proposition \ref{prop:partition_2EFT} respectively \ref{prop:partition_spin_2EFT} it is easy to show that there are smooth families $B^s\colon \fh\to \cN(V^s)$ satisfying relations \eqref{eq:2|1_relation}: in terms of the algebraic data $(V^s,\lambda^s,\rho^s(\tau))$  that determine the $\CEFT{2|0}$ $E$, the operator $A^s(\tau)=E(C^s_\tau)$ is given  by the composition \eqref{eq:Atau} of $\rho^s(\tau)$ and $\lambda^s$. Since $\rho^s(\tau)$ depends {\em holomorphically} on $\tau$, the function $A^s(\tau)$ is holomorphic. Hence  setting $B^s\equiv 0$  the relations \eqref{eq:2|1_relation} are satisfied and we conclude that $E$ has a supersymmetric extension $\wh E$. 
\end{proof}

\section{Twisted field theories}\label{sec:twisted}

In this section we will define field theories of non-zero degree, or -- in physics lingo -- non-zero central charge. More generally, we will define {\em twisted} field theories over a manifold $X$. As explained in the introduction we would like to think of field theories over $X$ as representing cohomology classes for certain generalized cohomology theories. Sometimes it is {\em twisted} cohomology classes that play an important role, \eg the Thom class of a vector bundle that is not orientable for the cohomology theory in question. We believe that the twisted field theories defined below (see Definition \ref{def:twistedEFT}) represent twisted cohomology classes, which motivates our terminology.  We will outline a proof of this for $d|\delta=0|1$ and $1|1$.

We will describe  {\em twisted field theories} as natural transformations between functors (see Definition \ref{def:twistedEFT}). More precisely, these are functors between internal categories; their domain is our internal bordism category $\EB {d|\delta}$. So our first task is to describe what is meant by a natural transformation between such functors. Then we will construct the range category and  outline the construction of the relevant functors which will allow us to define Euclidean field theories of degree $n$. We end the section by relating Euclidean field theories of degree zero to field theories as defined in section  \ref{subsec:susyFT2} and by comparing our definition with Segal's definition of conformal field theories with non-trivial central charge.

\subsection{Twisted Euclidean field theories}
\label{subsec:twisted}
We first introduce the internal category that will serve as range category for a field theory.

\begin{defn}\label{def:TA}
The internal category $\TA$ of {\em topological algebras} has the following object and morphism categories:
\begin{itemize}
\item[$\TA_0$] Is the groupoid whose objects are {\em topological algebras}. A topological algebra  is a monoid in the symmetric monoidal groupoid $\TV$ of topological vector spaces (equipped with the projective tensor product); \ie an object $A\in \TV$ together with an associative multiplication $A\otimes A\to A$. Morphisms are continuous algebra isomorphisms.
\item[$\TA_1$] is the category of bimodules over topological algebras. A bimodule is a triple $(A_1,B,A_0)$, where $A_0$, $A_1$ are topological algebras, and $B$ is an $A_1$-$A_0$\nb-bimodule (\ie an object $B\in \TV$ with a morphism $A_1\otimes B\otimes A_0\to B$ satisfying the usual conditions required for a module over an algebra). A morphism from $(A_1,B,A_0)$ to $(A'_1,B',A'_0)$ is a triple $(f_1,g,f_0)$ consisting of isomorphisms $f_0\colon A_0\to A_0'$, $f_1\colon A_1\to A_1'$ of topological algebras and a morphism $f\colon  B\to B'$ of topological vector spaces which is compatible with the left action of $A_1$ and the right action of $A_0$ ($A_1$ acts on $B'$ via the algebra homomorphism $A_0\to A_0'$, and similarly for $A_0$).
\end{itemize}
There are obvious source and target functors 
\[
s,t\colon \TA_1\to\TA_0 \quad\text{given by} \quad s(A_1,B,A_0)= A_0  \quad \text{and} \quad t(A_1,B,A_0)= A_1, 
\]
and a composition functor
\[
c\colon \TA_1\times_{\TA_0}\TA_1\ra \TA_1
\qquad
(A_2,B,A_1),(A_1,B',A_0)\mapsto (A_2,B\otimes_{A_1}B',A_0)
\]
The two categories $\TA_0$, $\TA_1$, the functors $s,t,c$ and the usual associator for tensor products define a category $\TA$ of {\em topological algebras} internal to the strict $2$\nb-category $\Cat$. We can do better by noting that the tensor product (in the category $\TV$) makes $\TA_0$, $\TA_1$ symmetric monoidal categories and that the functors $s,t,c$ are symmetric monoidal functors. This gives $\TA$ the structure of a category internal to the strict $2$\nb-category $\SymCat$ of symmetric monoidal categories.

Analogously to $\TV$, there is a family version of the internal category $\TA$, obtained by replacing algebras (respectively bimodules) by families of algebras (respectively bimodules) parametrized by some supermanifold. To come up with the correct definition of `family' here, it is useful to recall that an algebra is a monoid in the category of vector spaces, and that a $A_1-A_2$\nb-bimodule is an object in the category of vector spaces which comes equipped with a left action of the monoid $A_1$ and a commuting right action of the monoid $A_2$. Since we've decided that a `family of topological vector spaces parametrized by a supermanifold $S$' is a sheaf over $S_{red}$ of topological $\scr O_S$-modules, we decree that an $S$\nb-family of topological algebras (respectively bimodules) is a monoidal object in the category of $S$\nb-families of topological vector spaces (respectively an object with commuting left- and right actions of monoids). 
The obvious forgetful functors $\TA_i\to \csM$ for $i=0,1$ make these categories Grothendieck fibered over the category $\csM$ of supermanifolds; the (projective) tensor product makes them symmetric monoids in $\Cat^\fl/\csM$. 
\end{defn}

\begin{defn}\label{def:twistedEFT} Let $X$ be a smooth manifold and let 
\[
T\colon \GMBord(X)\to \TA
\]
 be a functor between categories internal to  $\Sym(\Cat^\fl/\csM)$; we will refer to such a functor as {\em twist}. 
A {\em $T$\nb-twisted $(G,\M)$-field theory} is a natural transformation, again internal to  $\Sym(\Cat^\fl/\csM)$,
\[
\xymatrix{
\GMBord(X)\ar@/^2pc/[rr]^{T^0}_{}="1"\ar@/_2pc/[rr]_T^{}="2"&& \TA \ar@{=>}^E"1";"2"
}
\]
where $T^0$ is the {\em constant twist} that maps every object $Y$ of $\GMBord(X)_0$ to the algebra $\C\in \TA_0$ (which is the unit in the symmetric monoidal groupoid  $\TA_0$). It maps every object $\Sigma$ of $\GMBord(X)_1$ to $(\C,B,\C)\in \TA_1$,  where $B$ is $\C$ regarded as a $\C$-$\C$\nb-bimodule (this is the monoidal unit in $\TA/\csM_1$). The functors $T^0_i\colon \GMBord_i(X)\to \TA_i$ send every morphism to the identity morphism of the monoidal unit of $\TA_i$.

We note that the symmetric monoidal structure on $\TA_i$, $i=0,1$, allows us to form the tensor product $T_1\otimes T_2$ of two twists. The tensor product $T\otimes T^0$ of any twist $T$ and the constant twist $T^0$ is naturally isomorphic to $T$. If $E_1$ is a $T_1$\nb-twisted field theory and $E_2$ is a $T_2$\nb-twisted field theory, we can form the tensor product $E_1\otimes E_2$, which is a $T_1\otimes T_2$\nb-twisted field theory. 
\end{defn}

Let us unravel this definition. For simplicity, we take the target $X$ to be a point (hence no additional information) and ignore the family aspect by restricting to the fiber category $\GMBord_{\pt}$ over the point $\pt$. A twist
\[
T=(T_0,T_1)\colon \GMBord_{\pt}\ra \TA_{\pt}
\]
 associates to an object $(Y,Y^c,Y^\pm)\in \GMBord_{\pt}$ (consisting of a $(G,\M)$-manifold $Y$ and a compact codimension one submanifold $Y^c$ dividing $Y$ into $Y^+$ and $Y^-$) a topological algebra $T_0(Y)$. To a $(G,\M)$-bordism $\Sigma$ from $Y_0$ to $Y_1$ the functor $T_1$ associates the $T_0(Y_1)$-$T_0(Y_0)$\nb-bimodule $T_1(\Sigma)$. 
  
 To understand the mathematical content of the natural transformation $E=(E_0,E_1)$,  we use Definition \ref{def:lax_nattrans} and Diagram \eqref{eq:lax_nattrans} in the case
\[
\sC=\GMBord_{\pt} \qquad \sD=\TA_{\pt} \qquad f=T^0 \qquad g=T \qquad n=E
\]
We see that a natural transformation $E$ is a pair $(E_0,E_1)$, consisting of the following data (where we suppress the subscript $\pt$):
\begin{itemize}
\item $E_0\colon \GMBord_0\to \TA_1$ is a functor; in particular, $E_0$ associates to each $(Y,Y^c,Y^\pm)$ an object $E_0(Y)$ of $\TA_1$, \ie $E_0(Y)$ is a bimodule. The commutative triangle \eqref{eq:nattrans} implies that $E_0(Y)$ is a left module over $t E_0(Y)=g_0(Y)=T_0(Y)$ and a right module over $sE_0(Y)=f_0(Y)=T^0_0(Y)=\C$; in other words, $E_0(Y)$ is just a left $T_0(Y)$\nb-module.
\item According to diagram \eqref{eq:lax_nattrans} $E_1$ is a natural transformation, \ie for every bordism $\Sigma$ from $Y_0$ to $Y_1$ (this is an object of $\sC_1$) we have a morphism $E_1(\Sigma)$ in $\sD_1$ whose domain (respectively range) is the image of $\Sigma\in \sC_1$ under the functors
\[
\xymatrix@1{\sC_1\ar[r]^<>(.5){g_1\times n_1s}&
\sD_1\times_{\sD_0}\sD_1\ar[r]^<>(.5){c_\sD}&\sD_1}
\quad\text{respectively}\quad
\xymatrix@1{\sC_1\ar[r]^<>(.5){n_1t\times f_1}&
\sD_1\times_{\sD_0}\sD_1\ar[r]^<>(.5){c_\sD}&\sD_1}.
\]
More explicitly, $E_1(\Sigma)$ is a map of left $T_0(Y_1)$\nb-modules 
\[
E_1(\Sigma)\colon T_1(\Sigma)\otimes_{T_0(Y_0)}E_0(Y_0)\ra E_0(Y_1)\otimes_{T^0_0(Y_1)}T^0_1(\Sigma)\cong E_0(Y_1),
\]
or, equivalently, a $T_0(Y_1)-T_0(Y_0)$\nb-bimodule map
\begin{equation}
E_1(\Sigma)\colon T_1(\Sigma)\ra \Hom(E_0(Y_0),E_0(Y_1))
\end{equation}
If $\Phi:\Sigma \to \Sigma'$ is a $(G,\M)$-isometry, \ie a morphism in $\sC_1$ then we obtain a commutative diagram
\begin{equation}\label{eq:isometry}
\xymatrix{
T_1(\Sigma) 
\ar[d]_{T_1(\Phi)}^\cong \ar[r]^<>(.5){E_1(\Sigma)}& \Hom(E_0(Y_0),E_0(Y_1)) \ar[d]^\cong_{E_0(\partial\Phi)}\\
T_1(\Sigma') \ar[r]^<>(.5){E_1(\Sigma')}&  \Hom(E_0(Y'_0),E_0(Y'_1)) 
}
\end{equation}

\end{itemize}
We note that the subscripts for the functors $T_i$, $E_i$ are redundant (whether we mean $T_0$ or $T_1$ is clear from the object we apply these functors to), and hence we will suppress the subscripts from now on.
Summarizing for future reference,  a $T$\nb-twisted field theory $E$ amounts to the following data:
\begin{equation}\label{eq:nat_trans_data}
\begin{aligned}
&\text{left $T(Y)$\nb-modules $E(Y)$ and `isometry invariant'}\\
&\text{$T(Y_1)-T(Y_0)$-bimodule maps $T(\Sigma)\to \Hom(E(Y_0),E(Y_1))$}
\end{aligned}
\end{equation}
for  objects $(Y,Y^c,Y^\pm)$ and bordisms $\Sigma$ from $Y_0$ to $Y_1$.  

The commutativity of the octagon required in the definition of a natural transformation (Definition \ref{def:lax_nattrans}) amounts to the commutativity of the following diagram for any bordism $\Sigma$ from $Y_0$ to $Y_1$ and bordism $\Sigma'$ from $Y_1$ to $Y_2$.
\begin{equation}\label{eq:gluing}
\xymatrix{
T(\Sigma')\otimes_{T(Y_1)}T(\Sigma)
\ar[d]^\cong\ar[rr]^<>(.5){E(\Sigma')\otimes E(\Sigma)}&&
\Hom(E(Y_1),E(Y_2))
\otimes_{T(Y_1)}
\Hom(E(Y_0),E(Y_1))\ar[d]^{\circ}\\
T(\Sigma'\circ\Sigma)
\ar[rr]^<>(.5){E(\Sigma'\circ\Sigma)}&&
\Hom(E(Y_0),E(Y_2))
}
\end{equation}

We note that if the twist $T$ is the constant twist $T^0$, then $E(Y)$ is just a topological vector space, and $E(\Sigma)$ is a continuous linear map $E(Y_0)\to E(Y_1)$. The commutativity of the diagram above is the requirement that composition of bordisms corresponds to composition of the corresponding linear maps. In other words:

\begin{lem} The groupoid of  $T^0$\nb-twisted $(G,\M)$-field theories is isomorphic to the groupoid of $(G,\M)$-field theories as in Definition \ref{def:susyGMFT}. 
\end{lem}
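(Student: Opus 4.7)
The plan is to unpack the definition of a natural transformation $E\colon T^0\Rightarrow T^0$ and observe that, because both source and target twists are constant at the monoidal unit $\one=\C\in\TA$, all the bimodule data degenerates to module data over $\C$, which is the same as vector space data. More precisely, using the formula \eqref{eq:nat_trans_data} for the underlying data of a $T$-twisted theory, I would note that $E(Y)$ is a left $T^0(Y)=\C$-module, hence just an object $E(Y)\in\TV_0$; and that an `isometry-invariant' $T^0(Y_1)$-$T^0(Y_0)$-bimodule map $T^0(\Sigma)=\C\to \Hom(E(Y_0),E(Y_1))$ is determined by its value on $1\in\C$, hence is precisely a continuous linear map $E(\Sigma)\colon E(Y_0)\to E(Y_1)$. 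This sets up the bijection on objects between the two groupoids.

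Next I would verify that the coherence axioms match. The gluing diagram \eqref{eq:gluing} for $T^0$-twisted theories, after the above identification, collapses (using the canonical isomorphism $\C\otimes_\C\C\cong\C$) to the statement $E(\Sigma'\circ\Sigma)=E(\Sigma')\circ E(\Sigma)$, which is exactly the compatibility of $E_1$ with composition required to be a functor of internal categories. The isometry-invariance diagram \eqref{eq:isometry} specializes to the functoriality of $E_0\colon\GMBord_0\to\TV_0$ together with the naturality of $E_1$ with respect to morphisms in $\GMBord_1$. The remaining axioms (unit, associator) in the definition of a natural transformation likewise degenerate, using that $T^0$ sends everything to the monoidal unit, to exactly the unit and associativity coherence data for a functor $\GMBord\to\TV$ of internal categories.

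I would then handle the enhanced structures. The symmetric monoidal structure of $\Sym(\Cat^\fl/\csM)$ is compatible on both sides: for the twisted side, tensoring $E$ with itself and comparing with the braiding uses the fact that $T^0\otimes T^0\cong T^0$ canonically, so the symmetric monoidal condition on $E$ as a natural transformation is equivalent to the symmetric monoidal condition on $E$ as a functor. The flip-preservation is automatic in both presentations since the flips on $\GMBord$ and $\TV$ are defined identically. This handles the objects of the two groupoids.

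Finally I would compare morphisms. A morphism between two $T^0$-twisted theories is, by the general framework for internal categories (Martins-Ferreira's modifications), an invertible modification from one natural transformation to another, which unpacks to a family of invertible $\C$-linear maps $E(Y)\to E'(Y)$ intertwining the assigned bordism maps; this is exactly a natural isomorphism of functors $\GMBord\to\TV$. The main technical point — and where one must be careful — is to track how the non-strict associator and unitor of the internal categories interact with the fact that $T^0$ lands in monoidal units: the needed canonical isomorphisms $\C\otimes_\C V\cong V$ and $\one\otimes_{\one}\one\cong\one$ all trivialize the extra $2$-cells. I expect the main obstacle to be purely bookkeeping: writing down the equivalence as a morphism of groupoids internal to $\Sym(\Cat^\fl/\csM)$ and verifying that all coherence hexagons and pentagons collapse correctly, rather than any substantive difficulty.
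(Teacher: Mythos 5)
Your proposal is correct and follows essentially the same route as the paper: the paper's (very brief) argument is exactly the observation that for $T=T^0$ the data \eqref{eq:nat_trans_data} degenerate --- $E(Y)$ is a $\C$-module, i.e.\ a topological vector space, and the bimodule map out of $T^0(\Sigma)=\C$ is determined by the image of $1$, i.e.\ a continuous linear map --- while the gluing diagram \eqref{eq:gluing} collapses to compatibility with composition. Your additional bookkeeping on the monoidal/flip structure and on morphisms of the two groupoids (modifications versus natural isomorphisms) fills in details the paper leaves implicit but does not change the approach.
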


\begin{rem}\label{rem:noninvertible_2cell} 
If $S,T\colon \GMBord\to \TA$ are two twists, and $E\colon S\Rightarrow T$ is an invertible twist, then for each $Y\in \GMBord$ the $T(Y)-S(Y)$-bimodule $E(Y)$ provides a Morita equivalence between the algebras $T(Y)$ and $S(Y)$; in particular, $E(Y)$ is an irreducible bimodule. If $S=T=T^0$, this implies that $E(Y)$ is a complex vector space of dimension one. This shows that only very special field theories correspond to {\em invertible natural transformations}.
\end{rem}

Let $E$ be a $T$\nb-twisted $(G,\M)$-field theory. Given $\Sigma\in \GMBord_1(\emptyset,\emptyset)$, \ie a family of closed supermanifolds with $(G,\M)$-structure parametrized by some supermanifold $S$, we can evaluate $E$ on $\Sigma$ to obtain an element $E(\Sigma)\in \Hom(T(\Sigma),\C)=T(\Sigma)^{\spcheck}$ (we note that since $\Sigma$ is a family of {\em closed} supermanifolds, $T(\Sigma)$ is just a sheaf of topological $\scr O_S$\nb-modules (rather than bimodules over algebras associated to the incoming/outgoing boundary of $\Sigma$). Since $E$ is a functor between categories internal to $\Sym(\Cat^{\fl}/\csM)$, no information is lost by restricting $\Sigma$ to be a family of {\em connected} closed $(G,\M)$-manifolds.  From an abstract point of view, the assignment $\Sigma\mapsto T(\Sigma)^{\spcheck}$ defines a sheaf of topological vector spaces over the moduli stack of closed connected $(G,\M)$-manifolds, and $\Sigma\mapsto E(\Sigma)$ is a section of this sheaf. 

\begin{defn}\label{def:part_funct_twisted}
The section $Z_E$ described above is the {\em partition function} of the twisted field theory $E$.
\end{defn}

\subsection{Segal's weakly conformal field theories as twisted field theories}
\label{subsec:weakly_conformal}
The goal of this subsection is to show that Segal's modular functors and weakly conformal field theory can be interpreted as twists respectively twisted field theories. More precisely, special types of twists $T\colon\CB{2}\to \TA$, whose domain is the $2$\nb-dimensional conformal bordism category, can be identified with {\em modular functors} in the sense of Segal \cite[Definition 5.1]{Se2}. Moreover, $T$\nb-twisted conformal field theories then become  {\em weakly conformal field theories} as in \cite[Definition 5.2]{Se2}. The translation works as follows.

Let $T\colon\CB{2}\to \TA$ be functor  such that the algebra $T(S^1)$ associated to the circle is a finite direct sum of copies of $\C$:
\[
T(S^1)=\bigoplus_{\phi\in \Phi}\C_\phi
\]
Here the subscript $\phi$ is just a book keeping tool to distinguish the various copies. Let us further assume that for every $2$\nb-dimensional conformal bordism $\Sigma$ the bimodule $T(\Sigma)$ is finite dimensional. If $\Sigma$ is a Riemannian surface with parametrized boundary circles, we can think of $\Sigma$ as a bordism from $\emptyset$ to the disjoint union of $k$ copies of $S^1$, and hence $T(\Sigma)$ is a left-module over $T(\p \Sigma)\cong T(S^1)\otimes\dots\otimes T(S^1)$. We note that a $T(S^1)$\nb-module can be thought of as $\Phi$\nb-graded vector space. Hence $T(\Sigma)$ is a $\Phi\times\dots\times\Phi$\nb-graded vector space. In particular, $T(\Sigma)$ can be decomposed in a direct sum of finite dimensional vector spaces; the summands are parametrized by the various ways of assigning a label in $\Phi$ to each boundary circle. These are the data of a modular functor in the sense of Segal \cite[Definition 5.1]{Se2}. Properties (i) and (ii) in Segal's definition follow from the fact that $\Sigma\mapsto T(\Sigma)$ is a monoidal functor. This was worked out in Hessel Posthuma's thesis. Segal requires further that $\dim T(S^2)=1$ and that $T(\Sigma)$ depends holomorphically on $\Sigma$. The second condition can be implemented within our framework by working over the site of complex manifolds (instead of smooth manifolds).  

Let $E$ be a $T$\nb-twisted conformal field theory. From our discussion above we see that $E$ assigns to $Y=S^1\amalg\dots\amalg S^1$ a left $T(Y)$\nb-module $E(Y)$. This vector space decomposes as direct sum of vector spaces, parametrized by ways to label each boundary circle by an element of $\Phi$. To a Riemann surface $\Sigma$ with boundary $\p\Sigma$, viewed as a bordism from $\emptyset$ to $\p\Sigma$, it assigns according to  \eqref{eq:nat_trans_data} a homomorphism
\[
T(\Sigma)\ra \Hom(E(\emptyset),E(\p \Sigma))=\Hom(\C,E(\p \Sigma))=E(\p \Sigma)
\]
which is $T(\p\Sigma)$\nb-equivariant, \ie compatible with the decomposition of these vector spaces according to ways of labeling the boundary circles. These are the data in Segal's definition of a weakly conformal field theory \cite[Definition 5.2]{Se2}, and again it isn't hard to show that Segal's conditions on these data follow from the properties of a twisted field theory.

\subsection{Field theories of degree $n$}
\label{subsec:degree}
\begin{defn} \label{def:prelim_deg}{\bf (Preliminary!)}
A Euclidean field theory of dimension $d|0$ and degree $n\in \Z$ is a natural transformation, internal to  $\Sym(\Cat^\fl)$,
\[
\xymatrix{
\EB {d|0}_{\pt}\ar@/^2pc/[rr]^{T^0}_{}="1"\ar@/_2pc/[rr]_{T^n}^{}="2"&& \TA_{\pt} \ar@{=>}^E"1";"2"
}
\]
Here $T^0$ is the constant twist (see \ref{def:twistedEFT}), and $T^n$ is a twist we will construct below. 
\end{defn}

We recall from \eqref{eq:nat_trans_data} that  the natural transformation $E$ in particular provides us with the following data:
\begin{itemize}
\item a left $T^n(Y)$\nb-module  $E(Y)$ for every object $Y\in (\EB{d|0}_{\pt})_0$;
\item a $T^n(Y_1)-T^n(Y_0)$\nb-bimodule map 
\[
E(\Sigma)\colon T(\Sigma)\ra \Hom(E(Y_0),E(Y_1))
\]
 for every object $\Sigma\in (\EB{d|0}_{\pt})_1(Y_0,Y_1)$.
\end{itemize}

\begin{rem}\label{rem:prelim} The definition above is preliminary in the same sense that Definition \ref{def:prelim} was preliminary: we should be {\em working in families}, \ie replace the internal categories $\EB {d|0}_{\pt}$ and $\TA_{\pt}$ by their (super) family versions $\EB{d|0}$, $\TA$ which are categories internal to $\Sym(\Cat^{\fl}/\csM)$ (see Def.\  \ref{def:TA} and Def.\ \ref{def:susyGMFT}). We expect to construct functors $\wh T^n\colon\EB{d|\delta}\to \TA$ of categories internal to $\Sym(\Cat^{\fl}/\csM)$ for $\delta=0,1$ such that the composition with the inclusion functors $\EB{d|\delta}_{\pt}\to \EB{d|\delta}$
is the functor $T^n$ constructed in this section.  Then $\EFT{d|\delta}$'s of degree $n$ for $\delta=0,1$ should be defined as a $\wh T^n$\nb-twisted Euclidean field theories of dimension $d|\delta$. For $d=0$, $\delta=1$, we gave an ad hoc construction of the twist \cite[Def.\ 3]{HKST}. The details of the construction of $\wh T^n$ for $d>0$, $\delta=0,1$ still have to be worked out.
\end{rem}

We construct the twist $T^n$ in terms of the `basic' twists  $T^{-1}$ and $T^{1}$ by defining
\[
T^n:=
\begin{cases}
(T^1)^{\otimes |n|}&n\ge 0\\
(T^{-1})^{\otimes |n|}&n\le 0
\end{cases}
\]
Here the tensor product $S\otimes T$ of two functors $S,T\colon \EB{d|0}_{\pt}\to \TA_{\pt}$ is defined using the symmetric monoidal structure of the categories 
$(\TA_{\pt})_i$ for $i=0,1$. This in turn is given by 
the (projective) tensor product (over $\C$) of algebras respectively bimodules. Below we will describe the construction of $T^{-1}$. The functor $T^1$ is  dual to $T^{-1}$ in the sense that $T^1\otimes T^{-1}$ is (lax) equivalent to the `trivial' twist $T^0$. In particular, if $\Sigma$ is a closed Euclidean spin $d$\nb-manifold, the complex line $T^1(\Sigma)$ is dual to $T^{-1}(\Sigma)$. 
 
For $Y$, $\Sigma$ as above, the $\E^{d|0}$\nb-structure determines in particular a spin-structure and a Riemann metric on these manifolds (see Example \ref{ex:euclidean_spin}), and hence a {\em Dirac operator}. We recall that the construction of the $\Z/2$\nb-graded spinor bundle $S=S^+\oplus S^-$ on a spin $d$\nb-manifold $\Sigma$ involves the choice of a $\Z/2$\nb-graded module $N=N^{ev}\oplus N^{odd}$ over the Clifford algebra $C\ell(\R^d)$. Given $N$, the spinor bundle $S$ is defined as the associated bundle $S=Spin(\Sigma)\times_{Spin(d)}N$, where $Spin(\Sigma)\to \Sigma$ is the principal $Spin(d)$\nb-bundle determined by the spin structure on $\Sigma$. In \cite{ST1} we chose $N=C\ell(\R^d)$ as a left-module over itself (leading to the `Clifford linear Dirac operator') while here we choose $N=\C\ell(\R^d)\otimes_{\C\ell(\R^d)^{ev}}\Delta^{\spcheck}$, where $\Delta$ is the module needed as a datum in the construction of the super Euclidean group in section \ref{subsec:superEuclidean}. 

\begin{enumerate}
\item $T^{-1}(Y)$ is the Clifford algebra generated by $C^\infty(Y^c,S^+_{|Y^c})$, the space of sections of the spinor bundle restricted to $Y^c$, equipped with the symmetric complex bilinear form that
gives the boundary term of Green's formula;
\item If $\Sigma$ is a closed manifold, then $T^{-1}(\Sigma)$ is the complex line $\Lambda^{top}(\cH^+(\Sigma))^{\spcheck}$. Here $\cH^+(\Sigma)$ is the finite dimensional space of section of $S^+$ which are {\em harmonic} (\ie in the kernel of the Dirac operator), and $\Lambda^{top}(V):=\Lambda^{\dim V}(V)$. 
\item If $\Sigma$ is a bordism from $Y_0$ to $Y_1$ without closed components, then $T^{-1}(Y)$ is the Fockspace, a $T^{-1}(Y_1)-T^{-1}(Y_0)$\nb-bimodule over these Clifford algebras determined by the Lagrangian subspace of $C^\infty(\p\Sigma,S^+_{|\p\Sigma})$ given by the boundary values of harmonic spinors.
\end{enumerate}
\begin{rem} 
For $d=1,2$, the algebra $T^{-1}(Y)$ associated to an object $Y\in \EB{d|0}_{\pt}$ is the Clifford algebra denoted $C(Y)$ in our earlier paper \cite[Def.\ 2.20]{ST1}. Similarly, the bimodule $T^{-1}(\Sigma)$ associated to a Euclidean spin bordism $\Sigma\in (\EB{d|0}_{\pt})_1$ is the Fock space module $F(\Sigma)$ of \cite[Def.\ 2.23]{ST1}. In \cite{ST1} we only considered these for $d=1,2$ as a side-effect of considering the Clifford-linear Dirac operator which makes the spaces $C^\infty(Y^c,S^+_{|Y^c})$ and $\cH^+(\Sigma)$ right modules over $C\ell(\R^{d-1})$. The Clifford algebra $C\ell(\R^{d-1})$ is $\R$ for $d=1$, $\C$ for $d=2$, but it is a non-commutative algebra for $d>2$. This confusing additional structure kept us from us using these data as input to the Clifford-algebra/Fockspace machine. With our current choice of $N$, the spinor bundle is just a complex vector bundle and we obtain a functor $T^{-1}\colon \EB{d|0}\to \TA$ for any $d$. 

Note also that the field theories of degree in \cite{ST1} are related to field theories of degree $-n$ in our current definition. 

This seems an appropriate place to point out an error in our formula (2.3) derived from Green's formula for the Dirac operator $D$. It should read
\[
\bra{D\psi e_1,\phi}+\overline{\bra{\psi,D\phi e_1}}=\bra{c(\nu)\psi_{|}e_1,\phi_{|}}
\]
\ie the second term should involve a complex conjugation which is missing in the formula as stated in \cite{ST1}. This conjugation is necessary to make all terms $\C$\nb-linear in $\phi$, and $\C$\nb-anti-linear in $\psi$ (our hermitian pairings are anti-linear in the first, and linear in the second slot).
\end{rem}

\begin{rem}\label{rem:dual} The reader might wonder about the various duals occurring in the above construction. 
Concerning the dual $\Delta^{\spcheck}$ in the construction of the spinor bundle, we note that the functions on our `model space' $\R^d\times\Pi \Delta$ for Euclidean structures are $C^\infty(\R^d)\otimes \Lambda^*(\Delta^{\spcheck})$. In particular, using $\Delta^{\spcheck}$ rather than $\Delta$ for the construction of the spinor bundle allows us to interpret spinors on a Euclidean $d|0$\nb-manifold $\Sigma$ as  {\em odd functions} on  the associated Euclidean supermanifold $\cS(\Sigma)$ of dimension $d|\delta$. This is a first step towards constructing the functor $\wh T^{-1}$ out of the Euclidean bordism category $\EB{d|\delta}$. 
\end{rem}

Specializing Definition \ref{def:part_funct_twisted} of the partition function of a $T$\nb-twisted field theory, we see that the partition function of a  Euclidean field theories of dimension $2|0$ and degree $n$ is a section $Z_E$ of a line bundle over the moduli stack of pointed Euclidean spin tori of dimension $2$. Explicitly, this stack is the quotient
\[
\fM=SL_2(\Z)\doublebackslash \left(\R_+\times(\fh^{++}\amalg  \fh^{-+}\amalg  \fh^{+-}\amalg  \fh^{--})\right),
\]
with the Euclidean torus $T_{\ell,\tau}^{s_1s_2}$ corresponding to $(\ell,\tau)\in \R_+\times\fh^{s_1s_2}$; here the superscripts $s_1,s_2\in \{+,-\}$ specify  the spin structure on the torus as explained in the proof of Proposition \ref{prop:partition_spin_2EFT}. The  group $SL_2(\Z)$  acts on $\R_+\times\fh$ by formula \ref{eq:SL_action}; in addition, it permutes the four copies of the upper half plane by 
$(A,\left(\begin{smallmatrix} s_1\\s_2\end{smallmatrix}\right))\mapsto A\left(\begin{smallmatrix} s_1\\s_2\end{smallmatrix}\right)$ 
for $A\in SL_2(\Z)$ and using the group isomorphism $\{+,-\} = \{+1,-1\}=\Z/2$. In particular, $\fh^{++}$ is fixed and the other three copies are permuted cyclically. The line bundle over this stack is the $SL_2(\Z)$\nb-equivariant line bundle over $\R_+\times\coprod_{s_1s_2}\fh^{s_1s_2}$ whose fiber over $(\ell,\tau)\in \R_+\times\fh^{s_1s_2}$ is the line $T^{-1}(T_{\ell,\tau}^{s_1s_2})^{\spcheck}$. The partition function $Z_E$ is the equivariant   section of this line bundle given by 
\[
Z_E(\ell,\tau)=\left(E(T_{\ell,\tau}^{s_1s_2})\colon T^n(T_{\ell,\tau}^{s_1s_2})\ra \Hom(T^n(\emptyset),T^n(\emptyset))=\Hom(\C,\C)=\C\right)
\]
for $\tau\in \fh^{s_1s_2}$. 

We note that the Weitzenb\"ock formula \cite[Ch.\ II, Thm.\ 8.8]{LM} implies that harmonic spinors on any compact Euclidean manifold are {\em parallel} (\ie their covariant derivative vanishes). In particular, the space of harmonic spinors on the torus $T^{s_1s_2}_{\ell,\tau}$ can be identified with the subspace of the parallel spinors on the universal covering $\E^2$ which is invariant under the action of the covering group $\Z^2$. Spinors on $\E^2$ are by construction functions on $\E^2$ with values in $\Delta^{\spcheck}$ (see Remark \ref{rem:dual}). Hence the space of parallel spinors is the space of constant functions which can be identified with $\Delta^{\spcheck}$. For $(s_1,s_2)\ne (+,+)$ the $\Z^2$\nb-action on the parallel spinors is non-trivial, and hence $T^{-1}(T_{\ell,\tau}^{s_1s_2})=\Lambda^{top}(\cH^+(T_{\ell,\tau}^{s_1s_2}))^{\spcheck}$ can be canonically identified with $\C$. For $(s_1,s_2)=(+,+)$, the group $\Z^2$ acts trivially on parallel spinors and hence $T^{-1}(T_{\ell,\tau}^{++})$ can be identified with $\Delta$.

If $E$ is a {\em conformal} Euclidean field theory of dimension $2|0$, then its partition function $Z_E$ becomes independent of $\ell$ in the sense that it is invariant under a natural $\R_+$\nb-action on the line bundle over the moduli stack $\fM$ (compare Remark \ref{rem:ell_independence} for field theories of degree zero). Then the section $Z_E$ amounts to 
\begin{enumerate}
\item a function on $\fh^{-+}$ which is invariant under the index three subgroup $\Gamma_0(2)$ of $SL_2(\Z)$. This subgroup fixes the spin structure $(-,+)$ and consists of matrices $A=\left(\begin{smallmatrix} a&b\\c&d\end{smallmatrix}\right)$ with $c\equiv 0\mod 2$. 
\item a function on $\fh^{++}$ with the transformation properties of a modular form of weight $-n/2$ (using a suitable choice of trivialization of the line bundle over $\fh^{++}$).
\end{enumerate}

\begin{rem}\label{rem:comparison_Segal} A $2$\nb-dimensional conformal field theory of central charge $c$ in the sense of Segal \cite{Se2} (more precisely, a {\em spin} CFT, see \cite{Kr}) gives a conformal Euclidean field theory of dimension $2|0$ and degree $n=c$. 
Its  partition function as defined by Segal  is a function on $\fh$ \cite[\S 6]{Se2} (see also \cite{Kr}). Based on Segal's discussion at the end of \S 6, it is not hard to show that his partition function is obtained from the function in (2) above by multiplying by $\eta(\tau)^{n/2}$, where $\eta(\tau)$ is Dedekind's $\eta$\nb-function. 
\end{rem}

\section{A periodicity theorem}
\label{sec:periodicity}
In this section we prove Theorem~\ref{thm:periodicity}. A field theory $P\in \EFT{d|0}^k(X)$ is a natural transformation
\[
\xymatrix{
\EB {d|0}(X) \ar@/^2pc/[rr]^{T^0}_{}="1"\ar@/_2pc/[rr]_{T^k}^{}="2"&& \TA \ar@{=>}^P"1";"2"
}
\]
Then multiplication with $P$ gives a functor
\[
P\otimes\colon \EFT{d|0}^n(X)\ra\EFT{d|0}^{n+k}(X)
\]
We note that if $P$ is invertible in the sense that there is a natural transformation $P^{-1}\colon T^k\Rightarrow T$ such that $P\circ P^{-1}$ and $P^{-1}\circ P$ are equivalent to the identity natural transformation, then $P\otimes$ is an isomorphism, with inverse given by multiplication by the Euclidean field theory 
\[
\xymatrix@1{
T^0\cong T^{-k}\otimes T^k\ar[rr]^{\id_{T^{-k}}\otimes P^{-1}}&&
T^{-k}\otimes T^0\cong T^{-k}
}
\]
of degree $-k$. 

We recall from Equation \eqref{eq:nat_trans_data} that $P$ gives a left $T^k(Y)$\nb-module $P(Y)$ for every object $Y\in\EB{d|0}_0$ and a bimodule map $P(\Sigma)\colon T^k(\Sigma)\to \Hom(P(Y_0),P(Y_1))$ for every Euclidean bordism $\Sigma$ of dimension $d|0$ from $Y_0$ to $Y_1$. 

To prove the periodicity statement, we will construct an invertible conformal Euclidean field theory $P$ of degree $-48$. The first step is an explicit description of the algebra $T^{-1}(K^s)$ and the bimodule $T^{-1}(C_\tau^s)$. We recall that $K^s\in \CEB{2|0}_0$ is the circle and $C_\tau^s\in \CEB{2|0}_1(K^s,K^s)$ is the cylinder with parameter $\tau\in \fh$. Here the superscript $s$ specifies the spin structure: $s=+$ is the periodic spin structure, and $s=-$ is the anti-periodic spin structure. We recall that the group $\R$ acts on the spin manifolds $K^s$ and $C^s_\tau$. Ignoring the spin-structures, $\tau\in \R$ acts  by rotation by $q:=e^{2\pi i\tau}$ on the circle $S^1$ respectively the cylinder $C_\tau$. We note that $1\in \R$ acts trivially on the spin manifolds $K^+$ and $C_\tau^+$, but by multiplication by $-1$ on the spinor bundle for $K^-$ and $C_\tau^-$, thus leading to an effective action of $\R/\Z$ for $K^+$, $C_\tau^+$ and an effective action of $\R/2\Z$ for $K^-$,  $C_\tau^-$. The $\R$\nb-action on $K^s$ as an object of $\CEB{2|0}_0$ induces by functoriality an $\R$\nb-action on the algebra $T^{-1}(K^s)$. We have the following $\R$\nb-equivariant algebra isomorphisms:
\begin{equation}\label{eq:decom_alg}
T^{-1}(K^+)\cong \Cl(\C_0)\otimes\bigotimes_{m\in \N} \Cl(H(\C_m))
\qquad
T^{-1}(K^-)\cong \bigotimes_{m\in \N_0+\frac 12} \Cl(H(\C_m))
\end{equation}
Here \begin{enumerate}
\item For a complex vector space $W$ equipped with a $\C$\nb-bilinear symmetric form $\omega\colon W\times W\to\C$, $\Cl(W)$ is the complex Clifford algebra generated by $W$;
\item $H(V)$ for a complex vector space $V$ is the hyperbolic form on $H(V):=V^{\spcheck}\oplus V$ defined by $\omega((f,v),(f',v'))=f(v')+f'(v)$;
\item $\C_m$ for $m\in \R$ is a copy of the complex numbers equipped with an $\R$\nb-action given by scalar multiplication by $q^m:=e^{2\pi i m\tau}$ for $\tau\in \R$. The Clifford algebra $\Cl(\C_0)$ is with respect to the standard form $\omega(z,z')=zz'$ on $\C_0$. The $\R$\nb-action on $H(\C_m)$ respectively $\C_0$ is by isometries and hence induces an action on the Clifford algebra it generates.
\item the tensor products are the {\em restricted} tensor product (\ie the closure of finite sums of tensor products $\bigotimes_m a_m$ where $a_m=1$ for all but finitely many $m$'s). 
\end{enumerate}

The structure of the bimodule $T^{-1}(C_\tau^s)$ is determined by the following isomorphism of $T^{-1}(K^s)-T^{-1}(K^s)$\nb-bimodules with $\R$\nb-action:
\[
T^{-1}(C_\tau^s)\cong T^{-1}(K^s)_{\phi(\tau)}
\]
Here $T^{-1}(K^s)$ is considered as a bimodule over itself, but with the right\nb-action twisted by the algebra homomorphism $\phi(\tau)\colon T^{-1}(C_\tau^s)\to T^{-1}(C_\tau^s)$ (\ie the right action of $a\in T^{-1}(K^s)$ on $m\in T^{-1}(K^s)$ is given by the product $m\cdot (\phi(\tau)(a))$ in the algebra $T^{-1}(K^s)$).

Moreover, $\phi(\tau)=\bigotimes_m\phi_m(\tau)$, where $\phi_m(\tau)$ is an algebra automorphism of $\Cl(H(\C_m))$ (respectively $\Cl(\C_0)$ for $m=0$). This automorphism is induced by the action of $\tau\in \fh\subset\C$ on $\C_m$ given by scalar multiplication by $e^{2\pi im\tau}$ (so it extends the $\R$\nb-action defined above).

Our goal now is to construct an invertible field theory of $P$ of degree $-n$, \ie an invertible natural transformation $P\colon T^0\Rightarrow (T^{-1})^{\otimes n}$ for $n$ as small as possible. We note that such a $P$ gives in particular a Morita-equivalence between the algebras $T^{0}(K^s)=\C$ and $(T^{-1})^n(K^s)$. Since the algebras $C\ell(H(\C_m))$ and $C\ell(\C_0)^{\otimes 2}$ are Morita-equivalent to $\C$, but $C\ell(\C_0)$ is not, we need to assume that $n$ is even.

To construct $P$, according to \eqref{eq:nat_trans_data}, we need to construct in particular a left $(T^{-1}(K^s))^{\otimes n}$\nb-module $P(K^s)$ and a $T^{-1}(K^s)^{\otimes n}-T^{-1}(K^s)^{\otimes n}$\nb-bimodule map
\begin{equation*}
P(C_\tau^s)\colon (T^{-1}(C_\tau^s))^{\otimes n}\ra \End(P(K^s))
\end{equation*}
We define $P(K^s)$ to be the following left module over $T^{-n}(K^s)=(T^{-1}(K^s))^{\otimes n}$:
\begin{equation}
P(K^+):=M_0^{\otimes \frac n2}\otimes\bigotimes_{m\in \N}M_m^{\otimes n}
\qquad
P(K^-):=\bigotimes_{m\in \frac 12+\N_0}M_m^{\otimes n}
\end{equation}
Here $M_m$ (respectively $M_0$) is an irreducible graded module over $\Cl(H(\C_m))$ (respectively $\Cl(\C_0)^{\otimes 2}$), and we let each factor from the tensor product decomposition \eqref{eq:decom_alg} act on the corresponding factor in the tensor product above. The modules $M_m$ provide a Morita equivalence between $\C$ and $\Cl(H(\C_m))$ for $m\ne 0$ and between $\C$  and $\Cl(\C_0)^{\otimes 2}$ for $m=0$.  It follows that $P(K^s)$ provides a Morita equivalence between $\C$ and $T^{n}(K^s)$  which is a necessary feature to insure {\em invertibility} of $P$ as a natural transformation from $T^0$ to $T^{n}$. We note that there are {\em two} non-isomorphic irreducible graded modules over $\Cl(H(\C_m))$ and $\Cl(\C_0)$. We will specify our choice for $M_m$ below. 

We note that any bimodule map 
\[
T^{-n}(C_\tau^s)=(T^{-1}(K^s)_{\phi(\tau)})^{\otimes n}\to \End(P(K^s))
\]
 is determined by the image of the unit $1\in (T^{-1}(K^s)_{\phi(\tau)})^{\otimes n}$. Our map $P(C_\tau^{s})$ is determined by specifying 
\begin{alignat*}{3}
P(C_\tau^+)(1)&=q^2\id_{M_0^{\otimes \frac n2}}\otimes\bigotimes _{m\in \N}b_m^{\otimes n}(\tau)\ &\in &&\ \End(P(K^+))&=\End\left(M_0^{\otimes \frac n2}\otimes\bigotimes_{m\in \N}M_m^{\otimes n}\right)\\
P(C_\tau^-)(1)&=q\bigotimes _{m\in \frac 12+\N_0}b_m^{\otimes n}(\tau)
&\in &&\ \End(P(K^-))&=\End\left(\bigotimes_{m\in \frac 12+\N_0}M_m^{\otimes n}\right)
\end{alignat*}
Here $b_m(\tau)$ is an element of the Clifford algebra $\Cl(H(\C_m))$ which acts by left-multiplication on the module $M_m$. It is defined by 
\[
b_m(\tau):=1+(1-q^m)\frac {ef}2\in \Cl(H(\C_m))
\]
where $\{e,f\}$ is a basis of $H(\C_m)$ given by $e:=1\in \C_m$, and letting $f\in \C_{-m}^{\spcheck}$ be the dual element (in particular $\omega(e,e)=\omega(f,f)=0$, $\omega(e,f)=1$).  It is a straightforward calculation to show that our prescription makes $P(C_\tau^s)$ a well-defined map of bimodules (we need to check that the annihilator of $1\in T^{-n}(K^s)$ agrees with the annihilator of its desired image element $P(C_\tau^s)(1)$). It is also not hard to show that $\fh\to \End(P(K^s))$, $\tau\mapsto P(C_\tau^s)(1)$ is a homomorphism; this shows that our definition of $P(C_\tau^s)$ is compatible with composing two cylinders (in more technical terms, diagram \eqref{eq:gluing} commutes if $\Sigma$, $\Sigma'$ are two cylinders). 

So far we have not constructed the bimodule maps $P(\Sigma)$ for the connected bordisms $R^s_\tau$, $L^s_\tau$, and $T^{s_1s_2}_\tau$. It turns out that these are determined by $P(C_\tau^s)$. For example, an element $\gamma\in T^{-n}(C_\tau^{s_2})$ determines an associated element $\delta\in  T^{-n}(T^{s_1s_2}_\tau)$ such that 
\[
P(T^{s_1s_2}_\tau)(\delta)=
\begin{cases}
\str P(C_\tau^{s_2})(\gamma)&s_1=+\\
\tr P(C_\tau^{s_2})(\gamma)&s_1=-
\end{cases}
\]
This allows us to calculate the partition function $Z_P$ of this putative conformal Euclidean field theory. It turns out that to ensure existence of a conformal field theory $P$ with $P(C_\tau^s)$ as above, the {\em only} consistency condition that needs to be checked (in addition to additivity for gluing cylinders) is that $Z_P$ is an $SL_2(\Z)$\nb-equivariant section. We recall that $T^{s_1s_2}(T_\tau)=\C$ for $(s_1,s_2)\ne (+,+)$, and hence $Z_P$ is just a function on $\fh^{-+}\amalg \fh^{+-}\amalg\fh^{--}$. There is a trivialization of the line bundle restricted to  $\fh^{++}$ such that an $SL_2(\Z)$\nb-equivariant section corresponds to a function on the upper half plane with the equivariance properties of  a modular form of weight $n/2$. Using this trivialization, the  function $Z_P$ is as follows:
\begin{equation}
Z_P(\tau)=
\begin{cases}
\left(q^{1/24}\prod_{m\in \N}(1-q^m)\right)^n\\
\left(2^{1/2}q^{1/24}\prod_{m\in \N}(1+q^m)\right)^n\\
\left(q^{-1/48}\prod_{m\in \N_0+\frac 12}(1-q^m)\right)^n\\
\left(q^{-1/48}\prod_{m\in \N_0+\frac 12}(1+q^m)\right)^n
\end{cases}
=\begin{cases}
\eta(\tau)^n&\tau\in \fh^{++}\\
\left(\frac{2^{1/2}\eta(2\tau)}{\eta(\tau)}\right)^n&\tau\in \fh^{-+}\\
\left(\frac{\eta(\tau/2)}{\eta(\tau)}\right)^n& \tau\in \fh^{+-}\\
\text{something}&\tau\in \fh^{--}
\end{cases}
\end{equation}
It is well-known that $\eta(\tau)^n$ is a modular form \Iff $n$ is a multiple of $24$. However, the function $Z_P$ is {\em not} $SL_2(\Z)$\nb-equivariant for $n=24$: the matrix $T=\left(\begin{smallmatrix} 1&1\\0&1\end{smallmatrix}\right)$ maps $\tau\in \fh^{+-}$ to $\tau+1\in \fh^{--}$, but $Z_P(T\tau)\ne Z_P(\tau)$ due to the factor $(q^{-1/48})^{24}=q^{-1/2}$ which changes sign under $T$. This forces us to take the power $n=48$.



\end{document}